\theoremstyle{plain}
\newtheorem{theorem}{Theorem}[section]
\newtheorem{lemma}[theorem]{Lemma}
\newtheorem{prop}[theorem]{Proposition}
\newtheorem*{lemma*}{Lemma}
\newtheorem*{cor*}{Corollary}
\newtheorem*{theorem*}{Theorem}
\theoremstyle{definition}
\newtheorem{problem*}{Problem}
\newtheorem{example}{Example}
\newtheorem{definition}[theorem]{Definition}
\theoremstyle{remark}
\newtheorem*{fact*}{Fact}
\newtheorem*{remark}{Remark}
\newtheorem{question}{Question}
\let\oldproofname=\proofname
\renewcommand{\proofname}{\rm\bf{\oldproofname}}
\newcommand{\R}{\mathbb R}
\newcommand{\Q}{\mathbb Q}
\newcommand{\Z}{\mathbb Z}
\newcommand{\C}{\mathbb C}
\newcommand{\CP}{\mathbb C\text{P}}
\newcommand{\phii}{\varphi}
\newcommand{\ep}{\varepsilon}
\providecommand*{\twoheadrightarrowfill@}{%
  \arrowfill@\relbar\relbar\twoheadrightarrow
}
\providecommand*{\twoheadleftarrowfill@}{%
  \arrowfill@\twoheadleftarrow\relbar\relbar
}
\providecommand*{\xtwoheadrightarrow}[2][]{%
  \ext@arrow 0579\twoheadrightarrowfill@{#1}{#2}%
}
\providecommand*{\xtwoheadleftarrow}[2][]{%
  \ext@arrow 5097\twoheadleftarrowfill@{#1}{#2}%
}
\newcommand{\op}[1]{\operatorname{{#1}}}
\newcommand{\mc}[1]{\mathcal{{#1}}}
\newcommand{\rank}{\op{rank}}
\newcommand{\Hom}{\op{Hom}}
\newcommand{\abs}[1]{\left\lvert#1\right\rvert}
\begin{document}
    \title{Chern Class Obstructions to Smooth Equivariant Rigidity}
    \author{Oliver H. Wang}
    \begin{abstract}
By work of Kirby-Siebenmann \cite{KirbySiebenmann} and Kervaire-Milnor \cite{KervaireMilnor}, there are only finitely many smooth manifolds homeomorphic to a given closed topological manifold.
A construction involving Whitehead torsion shows this is not the case equivariantly for smooth finite group actions on a product $M\times I$ (see \cite[p. 262-266]{BrowderHsiangProblem}). 
When $2$ has odd order in $\left(\Z/p\Z\right)^\times$, Schultz \cite{SchultzSpherelike} uses a different method involving the Atiyah-Singer index theorem and computations of Ewing \cite{EwingSpheresAsFPSets} to show that there are infinitely many equivariant smooth structures for certain actions of $G=\Z/p\Z$ on even dimensional spheres with fixed point set $S^2$.
These examples are constructed by finding infinitely many $G$-vector bundles over $S^2$ with vanishing Atiyah-Singer class and using these vector bundles to replace the normal bundle of $S^2\subseteq S^{2n}$.
We analyze when a manifold supports infinitely many $G$-vector bundles with vanishing Atiyah-Singer class and show that Schultz's examples of exotic equivariant manifolds can be extended to much greater generality.
As a consequence, we see that, for infinitely many primes $p$, there are infinitely many stable $G$-smoothings of a smooth $G$-manifold in the sense of Lashof \cite{LashofStableGSmoothing} whenever the fixed set has nonzero second rational cohomology.
\end{abstract}
	\maketitle

\tableofcontents

\section{Introduction}
For a closed topological manifold $X$, a smoothing of $X$ is defined to be a homeomorphism $Y\rightarrow X$ where $Y$ is a smooth manifold.
In \cite{KirbySiebenmann}, the set $TOP/O(X)$\footnote{We deviate from the classical notation of $\mc{S}^{TOP/DIFF}(X)$ so it will not be confused with a surgery theoretic structure set and to make the notation cleaner in the equivariant setting.} of smoothings up to isotopy is studied and it is shown that there is a bijection $TOP/O(X)\cong[X,TOP/O]$ where $TOP/O$ is an infinite loop space.
In particular, this set is a cohomology group so some computational methods are available.

The group $\op{Homeo}(X)$ acts on $TOP/O(X)$ and the quotient $\overline{TOP/O}(X)$ is the set of smooth manifolds homeomorphic to $X$ up to diffeomorphism.
This set is more difficult to compute.
When $n\ge5$, the group $\left[S^n,TOP/O\right]$ can be identified with the group of homotopy spheres of \cite{KervaireMilnor} and is therefore finite.
For $n<5$, Kirby-Siebenmann show that $[S^n,TOP/O]$ is finite by other means.
Consequently $TOP/O(X)$ and $\overline{TOP/O}(X)$ are finite sets.
We may define analagous sets in an equivariant setting.

\begin{definition}\label{def: equivariant smoothings}
Let $G$ be a finite group and let $X$ be a $G$-manifold.
A \emph{$G$-smoothing} of $X$ is an equivariant homeomorphism $\alpha:Y\rightarrow X$.
Two $G$-smoothings $\alpha_0$ and $\alpha_1$ are \emph{isotopic} if $\alpha_0$ is homotopic through $G$-homeomorphisms to $\alpha_0'$ where $\alpha_1^{-1}\circ\alpha_0'$ is a $G$-diffeomorphism.
Define $TOP/O_G(X)$ to be the set of isotopy classes of $G$-smoothings of $X$.
Define $\overline{TOP/O}_G(X)$ to be the equivariant diffeomorphism classes of smooth $G$-manifolds $Y$ which are equivariantly homeomorphic to $X$.
\end{definition}

\begin{remark}
If $X$ is closed and $\overline{TOP/O}_G(X)$ is infinite, then Kirby-Siebenmann's result implies there is a smooth structure on $X$ such that there are infinitely many periodic diffeomorphisms of $X$ which are conjugate in $\op{Homeo}(X)$ but not in $\op{Diff}(X)$.
\end{remark}

Schultz shows in \cite{SchultzSpherelike} that, for certain actions of $\Z/p\Z$ on $S^{2n}$ with fixed point set $S^2$, the set $\overline{TOP/O}_G\left(S^{2n}\right)$ is infinite contrary to the non-equivariant case.
Our goal is to generalize Schultz's construction to smooth $\Z/p\Z$-actions on manifolds whose fixed point set is not necessarily $S^2$.
Before explaining Schultz's work we introduce some notions fundamental to the study of smooth group actions.

For the remainder of the paper, let $G=\Z/p\Z$ where $p$ is an odd prime and let $g_0$ be a fixed generator.
If $M$ is a connected component of $X^G$, then the normal bundle of $M$ inherits the structure of a real $G$-vector bundle with fiber a real $G$-representation $V$.
We call $V$ the \emph{normal representation of $M$} and we say that $V$ is \emph{free} if $V^G=\{0\}$.
All representations obtained from normal bundles of fixed point sets are free.
A bundle of $G$-representations is said to be free if its fibers are free.

The nontrivial real irreducible $G$-representations are isomorphic to $\C$ where $g_0$ acts via multiplication by a primitive $p$-th root of unity.
Moreover, the representation determined by a primitive $p$-th root of unity $\zeta$ is isomorphic as a real representation to the one determined by the complex conjugate $\bar{\zeta}$.
So there are $\frac{p-1}{2}$-many nontrivial irreducible free real $G$-representations.
If $\nu$ is the normal bundle of $M$ as above, then $\nu$ decomposes as a sum $\nu\cong\bigoplus_{k=1}^{\frac{p-1}{2}}\nu_k$ of eigenbundles for $g_0$.
If $\phii:Y\rightarrow X$ is an equivariant diffeomorphism, $\phii^{-1}\left(M\right)\cong M$ and the normal bundle of $\phii^{-1}\left(M\right)$ is $\phii^*E$.

\subsection{Actions on Spheres}
In \cite{AtiyahSinger3}, Atiyah and Singer define the $G$-signature $\op{sign}_G(X)$ of a smooth $G$-manifold $X$.
This is an equivariant homotopy invariant of $X$ valued in the real representation ring of $G$.
For computational purposes, it is convenient to identify a representation with its character.
The Atiyah-Singer $G$-signature theorem states
\[
\op{sign}_G(X)(g)=\langle A(g,V)L\left(X^g\right)\mc{M}(g,\nu),\left[X^g\right]\rangle
\]
where $A(g,V)\in\C$, $L\left(X^g\right)$ is the $L$-genus and $\mc{M}(g,\nu)\in H^*\left(X^g;\C\right)$ is a (non-homogeneous) characteristic class of the normal bundle $\nu$ of $X^g$.
The class $\mc{M}(g,\nu)$ can be described in terms of the Chern classes of $\nu$.
As suggested by the notation, the number $A(g,V)$ depends only on the element $g\in G$ and the representation $V$.
We call $A(g_0,V)\mc{M}(g_0,\nu)$ the Atiyah-Singer class.
In Lemma \ref{lem: AS determined by generator} we show that the representation is determined by the Atiyah-Singer class.

Ewing \cite{EwingSemifree} applies the Atiyah-Singer $G$-signature theorem to determine the possible Chern classes of the normal bundle of the fixed point set of a smooth, semifree action of a cyclic group.
When $X$ is a sphere, the $G$-signature always vanishes since spheres have no middle-dimensional cohomology.
Moreover, the fixed point set is a rational homology $2n$-sphere by Smith theory  so $\mc{L}\left(X^{g_0}\right)=1$ and
\[
\mc{M}(g_0,\nu)=1+\sum_{k=1}^{\frac{p-1}{2}}\Phi_{n,k} c_n\left(\nu_k\right)
\]
where the $\Phi_{n,k}$ are elements of $\Q\left(\zeta\right)$.
Ewing shows that, unless $n=1$ and $2$ has odd order in $\left(\Z/p\Z\right)^\times$, the elements $\Phi_{n,k}$ are $\Q$-linearly independent as $k$ varies.
Since the Atiyah-Singer class must vanish, this implies that, outside the special case, the Chern classes of the normal bundle must vanish.

In the case $p$ has odd order in $\left(\Z/p\Z\right)^\times$, Ewing shows that the set $\left\{\Phi_{1,k}\right\}_{k=1}^{\frac{p-1}{2}}$ is $\Q$-linearly dependent.
If $V$ is the normal representation of $S^2$ then this implies that, provided $V$ contains enough nonzero eigenbundles, there are infinitely many $G$-vector bundles $E$ over $S^2$ with fiber $V$ whose Atiyah-Singer class vanishes.
Schultz shows in \cite{SchultzSpherelike} that infinitely many of these $G$-vector bundles can be realized as normal bundles of $\Z/p\Z$-actions on homotopy $2n$-spheres.

The above results motivate the following questions.

\begin{question}\label{question: AS vanishing}
Suppose $E$ is a free $G$-vector bundle over a CW-complex $M$.
When does the Atiyah-Singer class vanish?
\end{question}

\begin{question}\label{question: infinitely many vb}
Given a free $G$-representation $V$, when are there infinitely many $G$-vector bundles over $M$ with fiber $V$ and vanishing Atiyah-Singer class?
\end{question}

\begin{question}\label{question: realizing exotic actions}
Suppose $G$ acts on $X$ and let $M$ be a component of $X^G$ with trivial normal bundle $M\times V$.
If there are infinitely many $G$-vector bundles over $M$ with fiber $V$ and vanishing Atiyah-Singer class, are there infinitely many $G$-smoothings of $X$ realizing these bundles as normal bundles?
\end{question}

\subsection{Main Results}
If $E$ is a free $G$-vector bundle, we write $E=\bigoplus_{k=1}^{\frac{p-1}{2}}E_k$ for the decomposition into eigenbundles where $g_0$ acts on the fiber of $E_k$ via multiplication by $\zeta^k$.
In this case, we give a complete answer to Question \ref{question: AS vanishing}.

\begin{restatable}{theorem}{ChernClass}\label{thm: Chern class restriction}
Let $E=\bigoplus_{k=1}^{\frac{p-1}{2}}E_k$ be a free $G$-vector bundle over a space $M$.
There is an equality $A\left(g_0,V\right)\mc{M}\left(g_0,E\right)=1$ if and only if both of the following hold.
\begin{enumerate}
\item $\sum_{k=1}^{\frac{p-1}{2}}c_1\left(E_k\right)\Phi_{1,k}=0\in H^2(M;\C)$;
\item For each $k$ and $n\ge1$, $c_n\left(E_k\right)=\frac{1}{n!}c_1\left(E_k\right)^n$.
\end{enumerate}
\end{restatable}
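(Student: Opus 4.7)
The plan is to expand $A(g_0,V)\mc{M}(g_0,E)$ via the splitting principle, take its logarithm to linearize, and extract one equation in each cohomological degree. Ewing's $\Q$-linear independence of the $\Phi_{n,k}$ (for $n\ge 2$) will then decouple the single scalar equation in each degree into separate vanishing statements for each eigenbundle.

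By multiplicativity of the Atiyah--Singer class,
\[
A(g_0,V)\mc{M}(g_0,E)=\prod_{k=1}^{(p-1)/2}\prod_{j=1}^{d_k}\tilde m_k(x_{k,j}),
\]
where $d_k=\op{rank}_{\C}E_k$, the $x_{k,j}$ are the formal Chern roots of $E_k$, and $\tilde m_k(y):=\frac{\zeta^k-1}{\zeta^k+1}\cdot\frac{\zeta^k e^y+1}{\zeta^k e^y-1}$ satisfies $\tilde m_k(0)=1$. Writing $\log\tilde m_k(y)=\sum_{m\ge 1}\beta_{k,m}y^m$ and collecting cohomological degrees yields
\[
\log\bigl(A(g_0,V)\mc{M}(g_0,E)\bigr)=\sum_{m\ge 1}\sum_{k}\beta_{k,m}\,p_m(E_k),
\]
where $p_m(E_k)\in H^{2m}(M;\Q)$ is the $m$-th power sum of the Chern roots of $E_k$, expressible as a universal polynomial in $c_1(E_k),\ldots,c_m(E_k)$ by Newton's identities. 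Hence $A(g_0,V)\mc{M}(g_0,E)=1$ is equivalent to $\sum_k\beta_{k,m}\,p_m(E_k)=0\in H^{2m}(M;\C)$ for every $m\ge 1$.

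The forward direction is immediate: Newton's identities show condition (2) is equivalent to $p_m(E_k)=0$ for all $m\ge 2$ and all $k$, and the $m=1$ equation coincides with (1) once one checks $\beta_{k,1}=\Phi_{1,k}$. For the converse, the $m=1$ equation is (1). For each $m\ge 2$ I would first identify $\Phi_{m,k}=(-1)^{m-1}m\,\beta_{k,m}$ by specializing to $M=S^{2m}$ with eigenbundles of complex rank $m$: there $c_i(E_k)=0$ for $i<m$ forces $p_m(E_k)=(-1)^{m-1}m\,c_m(E_k)$, and the degree-$2m$ contribution of $\tilde m_k(E_k)$ collapses to $\beta_{k,m}\,p_m(E_k)$, matching Ewing's expression $\sum_k\Phi_{m,k}c_m(E_k)$. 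Ewing's theorem then says $\{\beta_{k,m}\}_{k}$ is $\Q$-linearly independent in $\C$. Since each $p_m(E_k)$ lies in the rational subspace $H^{2m}(M;\Q)\subseteq H^{2m}(M;\C)$ and $H^{2m}(M;\C)=H^{2m}(M;\Q)\otimes_{\Q}\C$, extending $\{\beta_{k,m}\}_k$ to a $\Q$-basis of $\C$ forces $p_m(E_k)=0$ for each $k$ individually. A final pass of Newton's identities converts these into $c_n(E_k)=\frac{1}{n!}c_1(E_k)^n$, which is (2).

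The main obstacle is this decoupling step: for a general space $M$, a single $\C$-linear relation among the $p_m(E_k)$ does not a priori imply the $(p-1)/2$ individual vanishing statements required by (2). The argument works only because the $\beta_{k,m}$ are $\Q$-linearly independent (Ewing) and the $p_m(E_k)$ are rational classes, so extension of scalars from $\Q$ to $\C$ reverses cleanly. A secondary but indispensable bookkeeping step is the identification $\Phi_{m,k}=(-1)^{m-1}m\,\beta_{k,m}$, which ports Ewing's numerical input into the form dictated by the log expansion.
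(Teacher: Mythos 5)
Your proof is correct, and it takes a genuinely different route from the paper's. The paper argues by induction on the cohomological degree: assuming $c_j(E_k)=\frac{1}{j!}c_1(E_k)^j$ for $j<n$, it expands the degree-$2n$ component of $\prod_k\mc{M}^{\zeta^k}(E_k)$ in the coefficients $\tau(j_1,\cdots,j_r)(\zeta^k)$ and cancels all mixed terms against the $n$-th power of relation (1), leaving $\sum_k\tau(n)(\zeta^k)\left(c_n(E_k)-\frac{1}{n!}c_1(E_k)^n\right)=0$, to which Lemma \ref{lem: Ewing} and the rationality of the Chern classes are applied. Your logarithm eliminates those mixed terms structurally rather than by multinomial bookkeeping: the identity $\log\prod_j(1+\beta_jz)=\sum_{m\ge1}\frac{(-1)^{m-1}}{m}P_mz^m$ gives $\beta_{k,m}=\frac{(-1)^{m-1}}{m}\Phi_{m,k}$ directly as a symmetric-function identity (so the specialization to $S^{2m}$ is unnecessary, though harmless), each degree then decouples with no induction, and Newton's identities---which the paper establishes in Proposition \ref{prop: Chern character} but does not invoke in this proof---convert the vanishing of the power sums in degrees $m\ge2$ into condition (2). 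The two essential inputs are the same in both arguments: Ewing's $\Q$-linear independence of $\{\Phi_{m,k}\}_k$ for $m\ge2$, and the observation that a linear relation with $\Q$-independent complex coefficients among classes lying in the rational subspace of $H^{2m}(M;\C)$ forces each class to vanish; you have isolated both correctly. Your route is shorter and makes transparent why only the coefficients $\tau(m)$, never the mixed $\tau(j_1,\cdots,j_r)$, ultimately matter; the paper's route produces an explicit formula for the degree-$2n$ component that it reuses in the hand computation of Proposition \ref{prop: c_2 of nontrivial vector bundle}. (One cosmetic point common to both write-ups: $A(g_0,V)\neq1$, so the hypothesis must be read as the vanishing of the positive-degree part of $\mc{M}(g_0,E)$, which is what both proofs actually use.)
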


When $2$ has even order in $\left(\Z/p\Z\right)^\times$, Ewing shows that the first condition above implies $c_1\left(E_k\right)=0$ for all $k$.
Otherwise, the $\Q$-span of $\left\{\Phi_{1,k}\right\}_{k=1}^{\frac{p-1}{2}}$ is a $\frac{(p-1)(t-1)}{2t}$-dimensional vector space where $t$ is the order of $2$ in $\left(\Z/p\Z\right)^\times$.
For simplicity, let $u:=\frac{(p-1)(t-1)}{2t}$.

\begin{definition}\label{def: sufficient nilpotence}
Let $G=\Z/p\Z$ where $p$ is such that $2$ has odd order in $\left(\Z/p\Z\right)^{\times}$.
Suppose $M$ is a space and $V$ is a free $G$-representation.
An element $\beta\in H^2\left(M;\Z\right)$ is \emph{sufficiently nilpotent with respect to $V$} if there is an $N>0$ such that the following hold.
\begin{enumerate}
\item $\beta^{N+1}=0$,
\item $V$ contains $(u+1)$-many irreducible real representations with multiplicity at least $N$.
\end{enumerate}
\end{definition}

\begin{example}
If $V$ is contains one copy of each nontrivial irreducible real representation, then every $\beta$ satisfying $\beta^2=0$ is sufficiently nilpotent with respect to $V$.
\end{example}

We can now state a partial answer to Question \ref{question: infinitely many vb}.

\begin{restatable}{theorem}{VBExistenceThm}\label{thm: VBExistence}
Let $G=\Z/p\Z$ and let $V$ be a free $G$-representation.
Suppose $M$ is homotopy equivalent to a finite CW-complex.

\begin{enumerate}
\item If $2$ has even order in $\left(\Z/p\Z\right)^\times$ or if $H^2(M;\Q)=0$ then there are only finitely many $G$-vector bundles over $M$ with fiber $V$ and vanishing Atiyah-Singer class.

\item If $2$ has odd order in $\left(\Z/p\Z\right)^\times$ and there is a nonzero $\beta\in H^2\left(M;\Q\right)$ sufficiently nilpotent with respect to $V$, then there are infinitely many $G$-vector bundles over $M$ with fiber $V$ and vanishing Atiyah-Singer class.
\end{enumerate}
\end{restatable}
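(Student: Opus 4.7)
The plan is to reduce both statements to \refthm{thm: Chern class restriction}, which translates the vanishing of the Atiyah-Singer class into the two Chern-class conditions (i) and (ii). Throughout we use that $M$ finite CW implies $K^0(M)$ and $H^*(M;\Z)$ are finitely generated, together with the rational Chern character isomorphism $\op{ch}\colon K^0(M)\otimes\Q\xrightarrow{\cong} H^{\mathrm{even}}(M;\Q)$.

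For part (1), suppose $E=\bigoplus E_k$ has vanishing Atiyah-Singer class. When $2$ has even order in $(\Z/p\Z)^\times$, Ewing's $\Q$-linear independence of $\{\Phi_{1,k}\}$ combined with (i) forces each $c_1(E_k)\in H^2(M;\Z)$ to be torsion; when $H^2(M;\Q)=0$ this is automatic. Condition (ii) then forces $c_n(E_k)$ to be torsion for every $n$. A standard finiteness argument --- the torsion subgroup of each $H^{2n}(M;\Z)$ is finite, and for a fixed Chern-class tuple the number of rank-$m_k$ bundles is finite by a Postnikov-tower analysis of $BU(m_k)$ applied to the finitely generated cohomology of $M$ --- then concludes that there are only finitely many $E_k$ for each $k$, and hence only finitely many $E$.

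For part (2), use Ewing's dimension count: $\dim_\Q\op{span}\{\Phi_{1,k}\}=u$ in $\Q(\zeta)$, so among any $u+1$ of the $\Phi_{1,k}$ there is a nonzero $\Q$-linear relation. By the sufficient nilpotence hypothesis there is a set $T$ of at least $u+1$ indices with $m_k\ge N$; clearing denominators produces integers $(a_k)_{k\in T}$, not all zero, with $\sum_{k\in T}a_k\Phi_{1,k}=0$. After scaling $\beta$ to be integral, let $L$ be a line bundle on $M$ with $c_1(L)=\beta$. The plan is to construct, for every $j$ in a suitable arithmetic progression, a $G$-vector bundle $E^{(j)}=\bigoplus_k E^{(j)}_k$ with $E^{(j)}_k$ trivial for $k\notin T$ and, for $k\in T$, $E^{(j)}_k$ a rank-$m_k$ bundle with $c_1(E^{(j)}_k)=ja_k\beta$ satisfying (ii). Condition (i) is then immediate from the relation on $(a_k)$, and distinct $j$ give distinct $c_1$'s hence pairwise non-isomorphic bundles.

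The content, and the anticipated main obstacle, is constructing the rank-$m_k$ bundles $E^{(j)}_k$ satisfying (ii). Newton's identities show that (ii) is equivalent to $\op{ch}(E^{(j)}_k)=m_k+ja_k\beta$, i.e.\ the Chern character is contained in cohomological degrees $0$ and $2$. The idea is to realize such a class by a virtual combination $\sum_i b_i L^{\otimes s_i}$ for distinct integers $s_0,\dots,s_N$, choosing $b_i\in\Z$ to match the expansion of $\sum_i b_i e^{s_i\beta}$ against $m_k+ja_k\beta$; this gives a Vandermonde-type linear system admitting integer solutions for $j$ in an arithmetic progression determined by the Vandermonde denominator. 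The nilpotence $\beta^{N+1}=0$ truncates this to $N+1$ equations, and because all potentially nonzero Chern classes of the resulting virtual class lie in cohomological degrees at most $2N\le 2m_k$, the standard obstruction-theoretic lift from $[M,BU]$ to $[M,BU(m_k)]$ is unobstructed, producing the desired honest rank-$m_k$ bundle. The careful verification of integer solvability and obstruction vanishing are the principal technical steps.
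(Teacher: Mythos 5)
Your part (1) is essentially the paper's argument: Ewing's $\Q$-linear independence forces the Chern classes of each eigenbundle to be torsion, and the map $c_*:BU(m)\to\prod_n K(\Z,2n)$, whose fiber has finite homotopy groups, bounds the number of bundles with a prescribed Chern-class tuple over a finite complex. Part (2), however, has a genuine gap at exactly the step you flag as the ``principal technical step'': producing an honest rank-$m_k$ bundle on $M$ realizing the Chern character $m_k+ja_k\beta$. Your justification --- that the lift from $[M,BU]$ to $[M,BU(m_k)]$ is unobstructed because the Chern classes of the virtual class $\sum_i b_i[L^{\otimes s_i}]$ vanish above degree $2N\le 2m_k$ --- is not correct. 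The fiber $U/U(m_k)$ of $BU(m_k)\to BU$ is $2m_k$-connected with $\pi_{2m_k+1}\cong\Z$, and the vanishing of $c_{m_k+1}$ kills only this primary obstruction; for $i>2m_k+1$ the groups $\pi_i\left(U/U(m_k)\right)$ are finite, so there are higher obstructions in $H^{i+1}\left(M;\pi_i\left(U/U(m_k)\right)\right)$ with torsion coefficients in all degrees up to $\dim M$ (which is unbounded in terms of $N$ and $m_k$), and these are invisible to Chern classes. Nothing in your setup kills them: the arithmetic progression in $j$ rescales the target cohomology class but does not act on the obstruction cocycles, because a general finite complex $M$ has no self-maps multiplying $H^{2n}$ by $t^n$. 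The same issue already appears one step earlier: a line bundle $L$ with $c_1(L)^{N+1}=0$ need not be classified by a map factoring through $\CP^N\subseteq\CP^\infty$, for the same torsion-obstruction reasons.

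This is precisely the difficulty the paper's Section \ref{section: construction of vb} is built to circumvent. There one first constructs (Proposition \ref{prop: bundles on M}) a map $M\to\CP^N$ pulling the generator back to a nonzero multiple of $\beta$, using the Sullivan model of $M$ (where $\beta^{N+1}=0$ supplies the needed cocycle) together with the presentation of $\CP^N_{(0)}$ as a mapping telescope of scaling maps, so that the rational map compresses into a finite telescope homotopy equivalent to $\CP^N$. The obstruction theory is then performed on $\CP^N$ itself (Proposition \ref{prop: one exp bundle on CP^N}), where every torsion obstruction can be annihilated by precomposing with a scaling self-map (Lemma \ref{lem: obstruction theory CP^N}); the exponential bundle on $M$ is obtained by pullback. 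If you want to salvage your Vandermonde construction, you would need to carry it out over $\CP^N$ and then pull back along such a map $M\to\CP^N$ --- at which point you have essentially reconstructed the paper's proof. Note also that the factorization through $\CP^N$ is not merely a convenience: it is reused in Propositions \ref{prop: exotic normal bundle} and \ref{prop: smoothing SE/G on CP^N}, so any replacement argument would still need to produce it.
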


To more easily state our answer to Question \ref{question: realizing exotic actions} we introduce another auxiliary definition.

\begin{definition}\label{def: exotic normal bundle}
Suppose $G$ acts smoothly on a manifold $X$ and let $M$ be a component of the fixed point set.
A $G$-vector bundle $E$ over $M$ is an \emph{exotic normal bundle of $(X,M)$} if $G$ acts smoothly on a manifold $Y$ and there is an equivariant homeomorphism $f:Y\rightarrow X$ such that $f^{-1}(M)$ has normal bundle $E$.
\end{definition}

\begin{restatable}{theorem}{FPRealization}\label{thm: FPRealization}
Suppose $G=\Z/p\Z$ acts smoothly on a manifold $X$.
Let $M$ be a component of $X^G$ whose normal bundle is $M\times V$ with $V$ a free $G$-representation.
Suppose $M$ is homotopy equivalent to a finite CW-complex and admits infinitely many $G$-vector bundles with fiber $V$ and vanishing Atiyah-Singer class.
Then,
\begin{enumerate}
\item Infinitely many of these vector bundles may be realized as exotic normal bundles of $(X,M)$,
\item The first Chern classes of these exotic normal bundles occupy infinitely many $GL_{\dim_{\Q}H^2(M;\Q)}(\Z)$-orbits of $H^2(M;\Q)$ so $\overline{TOP/O}_G(X)$ is infinite.
\end{enumerate}
\end{restatable}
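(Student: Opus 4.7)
The plan is to perform an equivariant tubular neighborhood replacement and then distinguish the resulting smoothings by first Chern classes. Using the equivariant tubular neighborhood theorem, I would write $X = X_0\cup_{M\times S(V)}(M\times D(V))$, where $X_0$ is a compact smooth $G$-manifold whose boundary $M\times S(V)$ admits a collar on which $G$ acts freely. Given a $G$-vector bundle $E$ from Theorem~\ref{thm: VBExistence} together with a $G$-homeomorphism $\phi:S(E)\to M\times S(V)$, the next step is to set $Y:=X_0\cup_\phi D(E)$. A priori only a topological $G$-manifold, $Y$ inherits a smooth $G$-structure because the free $G$-action near the interface allows one to transfer collar-smoothing to the orbit space and lift it back. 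The identity on $X_0$ extended by the cone on $\phi$ across the tube will be a $G$-homeomorphism $Y\to X$ realizing $E$ as an exotic normal bundle of $(X,M)$.

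The crux of part~(1) is producing $\phi$ for infinitely many $E$'s. The family of Theorem~\ref{thm: VBExistence} is parameterized by rational tuples $(a_k)$ in the $u$-dimensional kernel of $(a_k)\mapsto\sum_k a_k\Phi_{1,k}$, with $c_1(E_k)=a_k\beta$ and higher Chern classes forced by Theorem~\ref{thm: Chern class restriction}. I would argue that infinitely many of these $E$'s have topological $G$-sphere bundle $S(E)\cong_G M\times S(V)$. Since $V$ is free, the comparison of classifying spaces $BO_G(V)\to B\op{TOP}_G(V)$, restricted to sphere bundles, reduces to the non-equivariant classification of fibrations with fiber $S(V)/G$ over $M$; the nilpotence condition $\beta^{N+1}=0$ caps the cohomological degree in which Chern data can refine the topological class, so the smoothly infinite family collapses to finitely many topological classes. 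Passing to a cofinite subfamily with topologically trivial sphere bundle provides the required $\phi$'s.

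For part~(2), let $d:=\dim_\Q H^2(M;\Q)$. I would note that any $G$-diffeomorphism $\psi:Y\to Y'$ restricts to a self-diffeomorphism $\psi|_M$ of $M$ carrying the eigenbundle decomposition of the normal bundle of $Y'$ onto that of $Y$, so $A:=(\psi|_M)^*\in\op{Aut}H^2(M;\Z)\cong GL_d(\Z)$ satisfies $A\,c_1(E'_k)=c_1(E_k)$ for each $k$. Writing $c_1(E_k)=a_k\beta$ and $c_1(E'_k)=a'_k\beta$ with $\beta\in H^2(M;\Z)$ primitive, the equations $a'_k(A\beta)=a_k\beta$ force a common ratio $\lambda=a_k/a'_k$ with $A\beta=\lambda\beta$; because $A\in GL_d(\Z)$ preserves primitivity of integral vectors, $\lambda=\pm 1$. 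Choosing pairwise non-proportional tuples $(a_k)$ in the kernel, which is possible since $u\ge 1$, then produces infinitely many $GL_d(\Z)$-orbits of first Chern data, proving $\overline{TOP/O}_G(X)$ is infinite.

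The hard part will be the topological triviality step: showing that the smoothly infinite family of Theorem~\ref{thm: VBExistence} collapses to a single topological $G$-bundle class. This demands equivariant smoothing theory in the spirit of Schultz, Lashof, and Rothenberg, exploiting both the free $G$-action on the fibers of $V$ and the nilpotence of $\beta$ to compare smooth and topological equivariant classifying spaces.
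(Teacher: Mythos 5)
Your global strategy --- excise the tube $M\times DV$, glue in $DE$ along an identification of sphere bundles, and then separate the resulting manifolds by the $GL_d(\Z)$-orbit of the first Chern data --- is the same as the paper's, and your part (2) argument (primitivity forces any $GL_d(\Z)$-eigenvalue on $\beta$ to be $\pm1$) is essentially the paper's one-line justification. But part (1) has two genuine gaps, and they are exactly where the paper does its real work.

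First, the reduction to ``finitely many topological classes'' of sphere bundles does not work as you describe, and even if it did it would not suffice. The classification you need is of $SV/G$-\emph{block} bundles over $M$ (not fibrations, and not topological $G$-bundles), because only a block-bundle equivalence can be extended over the disk bundle fiberwise (this is Proposition \ref{prop: block bundle control}; a bare $G$-homeomorphism $SE\to M\times SV$ cannot simply be ``coned'' since it need not respect the projection to $M$). By Cappell--Weinberger, $B\widetilde{SPL}(SV/G)$ has nontrivial \emph{rational} homotopy detected by the Atiyah--Singer multisignature invariant, so there are genuinely infinitely many block-bundle classes over $M$ in general; nilpotence of $\beta$ plays no role in collapsing them. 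What makes your family land in the trivial class is: (a) the vanishing of the Atiyah--Singer class kills the $\tilde L^s_{\dim V}(G)_{(0)}$-component (Proposition \ref{prop: BSPL(SV/G) and AS invariant}); (b) exponentiality forces the Pontryagin classes and (via the linear relation) the Euler class to vanish, so the underlying nonequivariant bundle is trivial and the $B\widetilde{SPL}(SV)$-component dies; (c) the remaining obstructions are torsion and are killed by precomposing with scaling self-maps of $\CP^N$, which requires first factoring the classifying maps through $\CP^N$ (Propositions \ref{prop: manifolds with infinitely many bundles} and \ref{prop: exotic normal bundle}). A pigeonhole argument yielding ``infinitely many bundles in \emph{some} common topological class'' would not help: you need them in the class of $M\times SV/G$ specifically, since that is what sits inside $X$.

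Second, producing the smooth structure on $Y$ is not automatic. Gluing two smooth manifolds along a homeomorphism of boundaries yields a topological manifold; it carries a smooth structure restricting to the given ones on both pieces (so that $E$ really is the normal bundle of $M$ in $Y$) essentially only when the gluing map is isotopic to a diffeomorphism. Your appeal to ``transferring collar-smoothing to the orbit space'' asserts this rather than proves it. The paper's Section \ref{section: smoothing} shows that the block-bundle trivialization $SE/G\to M\times SV/G$ is $PL$-isotopic to a diffeomorphism by viewing it as an element of $[M\times SV/G,PL/O]$, using finiteness of $\pi_*(PL/O)$ in the Atiyah--Hirzebruch spectral sequence for the bundle over $\CP^N$, and composing with yet another scaling map to annihilate the obstruction (Propositions \ref{prop: smoothing SE/G on CP^N} and \ref{prop: smoothing SE/G on M}). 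Without this step the construction of $Y$ as a smooth $G$-manifold with the prescribed exotic normal bundle is incomplete.
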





\begin{remark}
There are infinitely many primes $p$ where $2$ has odd order in $(\Z/p\Z)^\times$.
Indeed, this is true whenever $p\equiv 7$ modulo $8$ and it occurs infinitely many times when $p\equiv 1$ modulo $8$ (see \cite{EwingSemifree}).
Outside these cases, $2$ always has even order in $(\Z/p\Z)^\times$.
\end{remark}

\begin{example}
Suppose $V$ is a $2n$-dimensional real representation of $G$ which contains at least one copy of each nontrivial irreducible representation.
Let $DV$ denote the unit disk of $V$ and let $SV$ denote the unit sphere.
Then, $S^{2n+2}=\left(S^2\times DV\right)\cup_{S^2\times SV}\left(D^2\times S^V\right)$ has a $G$-action with fixed point set $S^2$.
The normal representation is $V$ and the generator of $H^2\left(S^2;\Q\right)$ is sufficiently nilpotent with respect to $V$ so  $\overline{TOP/O}_G\left(S^{2n}\right)$ is infinite.
This is the example of Schultz.
\end{example}

\begin{example}
If $M$ is smooth then the normal bundle of $M$ diagonally embedded in $M^{\times p}$ is $\tau_M^{\oplus p-1}$ where $\tau_M$ is the tangent bundle.
If $M^{\times p}$ is given a $G$-action via cyclically permuting coordinates, then the normal bundle of $M$ is $\tau_M\otimes\R[G]/\R$.
In particular, if $M$ is also closed and parallelizable such that $H^2\left(M;\Q\right)$ is nonzero and if $p$ is a prime such that $2$ has odd order in $\left(\Z/p\Z\right)^\times$ then Theorem \ref{thm: FPRealization} implies $\overline{TOP/O}_G\left(M^{\times p}\right)$ is infinite.

Let $V$ be the reduced regular representation and let $S^V$ denote the one point compactification of $V$.
If $M$ is only stably parallelizable with $H^2\left(M;\Q\right)\neq0$ and $p$ is as above, then $\overline{TOP/O}_G\left(M^{\times p}\times S^V\right)$ is infinite.
\end{example}

\begin{example}
For a more complicated example, let $M$ and $V$ be as in Theorem \ref{thm: FPRealization}.
Suppose $M$ is closed.
Then $M\times SV$ is a closed manifold with a free $G$-action which nonequivariantly bounds.
By equivariant cobordism theory \cite{ConnerFloyd} a disjoint union of $M\times SV$ bounds a smooth, compact manifold $X'$ with free $G$-action.
Define $X$ to be the manifold obtained by gluing copies of $M\times V$ to each boundary component of $X'$.
Then $X$ will satisfy the hypotheses of Theorem \ref{thm: FPRealization}.

Modifying this construction also shows that there are infinitely many $G$-manifolds $X$ for which the hypotheses of \ref{thm: FPRealization} do not hold.
\end{example}

The hypotheses on the normal bundle of the fixed point set can be removed when we stabilize as in \cite{LashofStableGSmoothing}.
Lashof defines a stable $G$-smoothing of a $G$-manifold $X$ is a $G$-smoothing of $X\times\rho$ for a finite dimensional $G$-representation $\rho$.
Two stable $G$-smoothings $\alpha_i:Y_i\rightarrow X\times\rho_i$, $i=0,1$ are stably isotopic if there are representations $\sigma_0$ and $\sigma_1$ such that $\alpha_i\times\sigma_i:Y_i\times\sigma_i\rightarrow X\times\rho_i\times\sigma_i$ are isotopic.
Let $TOP/O_G^{st}(X)$ denote the set of stable isotopy classes of stable $G$-smoothings of $X$.

\begin{restatable}{theorem}{StabilityThm}\label{thm: stability}
Let $G=\Z/p\Z$ where $p$ is such that $2$ has odd order in $\left(\Z/p\Z\right)^\times$.
Let $X$ be a smooth $G$-manifold.
If $H^2\left(X^G;\Q\right)$ is nonzero for some component $M$ of $X^G$ homotopy equivalent to a finite CW-complex, then $TOP/O_G^{st}(X)$ is infinite.
In particular, if $X$ is closed and $H^2\left(X^G;\Q\right)\neq0$ then $TOP/O_G^{st}(X)$ is infinite.
\end{restatable}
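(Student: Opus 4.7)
My plan is to reduce to Theorem \ref{thm: FPRealization} by stabilizing $X$ with a free $G$-representation $\sigma$, producing a trivial summand $\underline{\sigma}$ of the normal bundle of $M$ that can be exotically replaced. Let $\nu$ denote the normal bundle of $M$ in $X$ with fiber representation $V_0$. Since $M$ is homotopy equivalent to a finite CW-complex and $H^2(M;\Q) \neq 0$, I pick a nonzero $\beta \in H^2(M;\Z)$ whose rational image is nonzero; then $\beta^{N+1} = 0$ for some $N > 0$.

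First I choose a free $G$-representation $\sigma$ containing each of the $\frac{p-1}{2}$ nontrivial real irreducible $G$-representations with multiplicity at least $N$, so that $\beta$ is sufficiently nilpotent with respect to $\sigma$. Theorem \ref{thm: VBExistence} then produces infinitely many $G$-vector bundles $\{E'_j\}_{j\ge 1}$ over $M$ with fiber $\sigma$ and vanishing Atiyah-Singer class.

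Next I consider the stabilized $G$-manifold $X \times D\sigma$ (linear action on $\sigma$), whose fixed component corresponding to $M$ is $M \times \{0\}$ with normal bundle $\nu \oplus \underline{\sigma}$, where $\underline{\sigma} := M \times \sigma$. For each $j$ I construct an exotic $G$-smoothing $\alpha_j : Y_j \to X \times \sigma$ by replacing a tubular neighborhood $D(\nu \oplus \underline{\sigma})$ of $M$ with $D(\nu \oplus E'_j)$: the vanishing Atiyah-Singer class of $E'_j$ should, by the mechanism underlying the proof of Theorem \ref{thm: FPRealization}, yield an equivariant bundle homeomorphism $SE'_j \cong_G M \times S\sigma$; taking the fiberwise join with $S\nu$ then produces $S(\nu \oplus E'_j) \cong_G S(\nu \oplus \underline{\sigma})$, providing the boundary identification for the gluing. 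The resulting smooth $G$-manifold $Y_j$ carries an equivariant homeomorphism to $X \times \sigma$, giving a stable $G$-smoothing of $X$.

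To distinguish the $\alpha_j$ as elements of $TOP/O_G^{st}(X)$, I use the same orbit argument as in Theorem \ref{thm: FPRealization}: the normal bundle of $M$ in $Y_j$ is $\nu \oplus E'_j$, whose eigenbundles have first Chern classes $c_1(\nu_k) + c_1(E'_{j,k})$; as $j$ varies these should occupy infinitely many $GL_{\dim_{\Q} H^2(M;\Q)}(\Z)$-orbits, so the $\alpha_j$ are pairwise non-stably-isotopic. The main obstacle is the sphere-bundle replacement construction in the third paragraph: adapting the gluing of Theorem \ref{thm: FPRealization} to this stabilized setting, where only the trivial summand $\underline{\sigma}$ of the normal bundle (rather than the whole normal bundle) is replaced. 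The vanishing-AS-class condition on $E'_j$ should govern the equivariant topological type of $E'_j$ as a $G$-vector bundle, providing the requisite equivariant sphere-bundle equivalence $SE'_j \cong_G M \times S\sigma$, from which the rest of the construction follows.
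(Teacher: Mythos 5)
Your overall strategy matches the paper's: stabilize by a free representation large enough that $\beta$ becomes sufficiently nilpotent with respect to the new trivial normal summand, perform the exotic replacement on that summand, and note that the eigenbundle first Chern classes of the normal bundle of the fixed component are unchanged by further products with representations, so the resulting smoothings stay distinct stably. The paper packages the replacement step as Theorem \ref{thm: nontrivial normal bundles}, and its proof of Theorem \ref{thm: stability} is then essentially your last paragraph.

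The genuine gap is the step you yourself flag as ``the main obstacle,'' and the mechanism you propose for it does not work. What the vanishing of the Atiyah--Singer class buys (after scaling) is a $PL$-block bundle equivalence $SE'_j/G\rightarrow M\times S\sigma/G$ that is $PL$-isotopic to a diffeomorphism; it does not give a fiberwise \emph{linear} identification. Consequently the ``fiberwise join with $S\nu$'' fails to produce the diffeomorphism you need: on each fiber the join of $\op{id}_{SV_0}$ with a nonlinear diffeomorphism $h$ of $S\sigma$ is, near the $SV_0$-locus, the radial cone $v\mapsto\abs{v}\,h\left(v/\abs{v}\right)$, which is differentiable there only when $h$ is linear. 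So the joined map $S\left(\nu\oplus E'_j\right)\rightarrow S\left(\nu\oplus\underline{\sigma}\right)$ is merely a homeomorphism, whereas gluing $D\left(\nu\oplus E'_j\right)$ smoothly to $\bar{X}$ requires a diffeomorphism of the boundary sphere bundles; moreover, extending the boundary identification over the disk bundle via Proposition \ref{prop: block bundle control} requires a block-bundle equivalence of the \emph{total} lens-space bundles $S\left(\nu\oplus E'_j\right)/G\simeq S\left(\nu\oplus\underline{\sigma}\right)/G$ over $M$, not just of their $\sigma$-parts. The paper's Theorem \ref{thm: nontrivial normal bundles} supplies exactly this: it factors the classifying map of $\nu\oplus E'$ through $Y\times\CP^N$ with $Y$ a product of Grassmannians, deduces the block-bundle equivalence of the full lens-space bundles from the triviality of $SE'/G$, and re-runs the Atiyah--Hirzebruch--Serre/scaling-map argument of Proposition \ref{prop: smoothing SE/G on CP^N} for the bundle $S\left(\gamma\times\ep_{V_1}\right)/G$ over $\CP^N$ to make that equivalence $PL$-isotopic to a diffeomorphism. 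Without some version of that argument your construction of the $Y_j$ is incomplete. (Your distinguishing step is fine and is in fact more than is needed: since a stable isotopy fixes the identification with $X\times\rho$, inequality of the eigenbundle first Chern classes, which survive products with $\sigma$, already suffices; the $GL(\Z)$-orbit count is only required for $\overline{TOP/O}_G$.)
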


\subsection{Outline}
The proof of Theorem \ref{thm: Chern class restriction} has a large computational component and will be the subject of Section \ref{section: exp vb}.
In \cite{SchultzSpherelike}, Schultz exploits the fact that homotopy classes of maps from spheres into various classifying spaces have abelian group structures.
We do not have this luxury at our level of generality.
It turns out that the maps we are concerned with will factor through $\CP^N$ for a sufficiently large $N$ and self-maps of $\CP^N$ serve as a replacement.
We elaborate on this and prove Theorem \ref{thm: VBExistence} in Section \ref{section: construction of vb}.

The idea of the proof of Theorem \ref{thm: FPRealization} is as follows: if $M\subseteq X^G$ has normal bundle $M\times V$, we remove $M\times V$ and glue in $E$ where $E$ is the total space of some free $G$-vector bundle on $M$.
To do this, we need $M\times SV$ to be equivariantly diffeomorphic to the unit sphere bundle $SE$.
We introduce block bundles in Section \ref{section: block bundles} and apply result of Cappell-Weinberger to show that, if $E$ has vanishing Atiyah-Singer class, then $SE/G$ is almost equivalent to $M\times SV/G$ as lens space block bundles over $M$.
In Section \ref{section: smoothing}, we show that this equivalence can be taken to be a diffeomorphism and we prove Theorem \ref{thm: FPRealization}.
In Section \ref{section: nontrivial normal bundles} we give some remarks on the necessity of the trivial normal bundle hypothesis in our theorems and we prove Theorem \ref{thm: nontrivial normal bundles} which is a more general version of Theorem \ref{thm: FPRealization}.
This theorem is used to prove Theorem \ref{thm: stability}.

\subsection{Acknowledgments}
The author would like to thank Shmuel Weinberger for many helpful conversations, especially regarding the paper \cite{CappellWeinbergerSimpleAS}.

\section{Exponential Vector Bundles}\label{section: exp vb}

In this section, we determine necessary and sufficient conditions for the vanishing of the Atiyah-Singer class.
The main result of this section is Theorem \ref{thm: Chern class restriction}.

\subsection{The Atiyah-Singer Classes}\label{subsection: AS classes}

For the convenience of the reader and to establish notation, we review Hirzebruch's theory of multiplicative sequences and its application in the Atiyah-Singer $G$-signature formula.
Details can be found in \cite[Chapter 1]{Hirzebruch} and \cite[Section 6]{AtiyahSinger3}
Recall that, if $E\rightarrow X$ is a complex rank $n$ vector bundle, then the splitting principle asserts there is a space $P(E)$ with a map $f:P(E)\rightarrow X$ where $f^*E$ splits into a sum of line bundles $L_1\oplus\cdots\oplus L_n$ and $f^*:H^*(X)\rightarrow H^*(P(E))$ is injective.
So in $H^*(P(E))$, the total Chern class of $E$ factors as
\[
c_*(E)=1+c_1(E)+\cdots+c_n(E)=\prod_{j=1}^n(1+c_1(L_j)).
\]
This motivates the use of formal factorizations used below.

Fix a commutative ring $R$ and let $R[c_1,c_2,\cdots]$ be the graded commutative ring of polynomials in $c_j$ where $c_j$ has grading $2j$ (we deviate slightly from the notation of \cite{Hirzebruch} here).
Similarly, we consider $R[c_1,\cdots,c_j]$ as a graded commutative ring and, for convenience, we set $c_0=1\in R$.
A \emph{multiplicative sequence} $\{K_j\}$ is a sequence of polynomials where $K_j\in R[c_1,\cdots,c_j]$ is homogeneous of degree $j$, where $K_0=1$ and such that, if there is a formal factorization
\[
1+c_1z+c_2z^2+\cdots=\left(1+c_1'z+c_2'z^2+\cdots\right)\left(1+c_1''z+c_2''z^2+\cdots\right),
\]
then
\[
\sum_{j=0}^{\infty}K_j(c_1,\cdots,c_j)z^j=\sum_{j=0}^\infty K_j(c_1',\cdots,c_j'')z^j\sum_{k=0}^\infty K_k(c_1'',\cdots, c_k'')z^k.
\]

Suppose $Q(z)=\sum_{j=0}^\infty b_j z^j$ is a formal power series with coefficients in $R$.
If $b_0=1$, then we can assign a multiplicative sequence $\{K_j\}$ as follows.
To determine $K_j$, let $m\ge j$ and suppose there is a formal factorization
\[
1+b_1z+\cdots+b_mz^m=\prod_{k=1}^m(1+\beta_kz)
\]
where each $\beta_k$ is of degree $1$.
Suppose $j_1\ge j_2\ge j_3\ge\cdots\ge j_r$ and that $j_1+\cdots+j_r=j$.
Then, the coefficient of $c_{j_1}c_{j_2}\cdots c_{j_r}$ in $K_j(c_1,\cdots,c_j)$ is the sum of \emph{distinct} $S_j$-translates of $\beta_{j_1}\cdots\beta_{j_r}$.
As an example, the coefficient of $c_j$ in $K_j$ is $\beta_1^j+\beta_2^j+\cdots+\beta_m^j$ and the coefficient of $c_1^j$ is the $j$-th elementary symmetric polynomial on $\beta_1,\cdots,\beta_m$.
So long as $m\ge j$, these coefficients are well-defined.
We will let $\tau(j_1,\cdots,j_r)$ denote the coefficient of $c_{j_1}\cdots c_{j_r}$.

Consider a free $\Z/p\Z$-vector bundle $E$ over a manifold $M$.
This breaks into a sum of eigenbundles $E=\bigoplus_{k=1}^{\frac{p-1}{2}}E_k$ where a given generator $g\in\Z/p\Z$ acts by a primitive $p$-th root of unity $\zeta^k$ on $E_k$ and such that $\zeta^k\neq \zeta^{k'},\bar{\zeta}^{k'}$ for $k\neq k'$.

Let $\left\{\mc{M}_r^{\zeta^k}\left(c_1,\cdots,c_r\right)\right\}$ be the multiplicative sequence determined by the power series associated to
\[
\left(\frac{\zeta^k-1}{\zeta^k+1}\right)\left(\frac{\zeta^k e^z+1}{\zeta^k e^z-1}\right).
\]
Define
\[
\mc{M}^{\zeta^k}\left(E_k\right):=\sum_{r=0}^\infty\mc{M}_r^{\zeta^k}\left(c_1\left(E_k\right),\cdots,c_r\left(E_k\right)\right)
\]
where $c_1\left(E_k\right),\cdots,c_r\left(E_k\right)\in H^*\left(M;\C\right)$ are Chern classes of the vector bundle $E_k$.
The complex number showing up in the Atiyah-Singer index theorem is
\[
A(g,V)=\prod_{k=1}^{\frac{p-1}{2}}\left(\frac{\zeta^k+1}{\zeta^k-1}\right)^{\rank_{\C}\left(E_k\right)}
\]
and the class $\mc{M}(g,E)$ is
\[
\mc{M}(g,E):=\prod_{k=1}^{\frac{p-1}{2}}\mc{M}^{\zeta^k}\left(E_k\right).
\]

Choose an integer $m$ such that $m>\rank_{\C}\left(E_k\right)$ for all $k$ and consider a formal factorization $\prod_{j=1}^m(1+\beta_{j,k}z)$ of the first $m$ terms of the power series $\left(\frac{\zeta^k-1}{\zeta^k+1}\right)\left(\frac{\zeta^k e^z+1}{\zeta^k e^z-1}\right)$.
We may write $\mc{M}^{\zeta^k}\left(E_k\right)$ as
\[
\mc{M}^{\zeta^k}(E_k)=\sum_{r=0}^m\sum_{\substack{j_1\ge\cdots\ge j_\ell>0\\j_1+\cdots+j_\ell=r}}\tau\left(j_1,\cdots,j_\ell\right)\left(\zeta^k\right)c_{j_1}(E_k)\cdots c_{j_\ell}\left(E_k\right).
\]

\begin{remark}
The $\Phi_{n,k}$ in the introduction and in Theorem \ref{thm: Chern class restriction} are the numbers $\tau(n)\left(\zeta^k\right)$.
\end{remark}

We will rely on results from \cite{EwingSemifree}, summarized below, for our analysis of the Atiyah-Singer class.

\begin{lemma}\label{lem: Ewing}
If $r>1$ then $\left\{\tau(r)\left(\zeta\right),\tau(r)\left(\zeta^2\right),\cdots,\tau(r)\left(\zeta^{\frac{p-1}{2}}\right)\right\}$ is a $\Q$-linearly independent set.
If $r=1$, then this set is $\Q$-linearly independent if and only if $2$ has even order in $\left(\Z/p\Z\right)^\times$.
Moreover, when $2$ has odd order in $\left(\Z/p\Z\right)^{\times}$, the span of this set has dimension $\frac{(p-1)(t-1)}{2t}$ where $t$ is the order of $2$ in $\left(\Z/p\Z\right)^{\times}$.
\end{lemma}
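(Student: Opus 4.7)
The plan is to obtain an explicit closed form for $\tau(r)(\zeta^k)$ via logarithmic differentiation and then analyze $\Q$-linear independence using the Galois action of $(\Z/p\Z)^\times=\Gal(\Q(\zeta)/\Q)$. Since $\tau(r)(\zeta^k)=\sum_j\beta_{j,k}^r$ is a power sum of the formal roots of $Q_k(z):=\tfrac{\zeta^k-1}{\zeta^k+1}\cdot\tfrac{\zeta^k e^z+1}{\zeta^k e^z-1}$, Newton's identity yields
\[
\sum_{r\ge 1}(-1)^{r+1}\tau(r)(\zeta^k)\,z^{r-1}=\frac{Q_k'(z)}{Q_k(z)}=\frac{-2\zeta^k e^z}{\zeta^{2k}e^{2z}-1}.
\]
Expanding in $z$ realizes each $\tau(r)(\zeta^k)$ as an explicit element of $\Q(\zeta)$, with transparent Galois equivariance $\sigma_a(\tau(r)(\zeta^k))=\tau(r)(\zeta^{ak})$ and (via $Q_{-k}(z)=Q_k(-z)$) the symmetry $\tau(r)(\zeta^{-k})=(-1)^r\tau(r)(\zeta^k)$. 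The $\Q$-span of $\{\tau(r)(\zeta^k):1\le k\le (p-1)/2\}$ is thus a $\Q[(\Z/p\Z)^\times]$-submodule of $\Q(\zeta)$, and measuring its dimension amounts to identifying which Galois characters survive.

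For $r=1$ the closed form $\tau(1)(\zeta^k)=-2\zeta^k/(\zeta^{2k}-1)=i/\sin(2\pi k/p)$ (taking $\zeta=e^{2\pi i/p}$), combined with the telescoping trigonometric identity $1/\sin(2\theta)=\cot\theta-\cot(2\theta)$, yields along each $\langle 2\rangle$-orbit $\{k,2k,4k,\ldots,2^{t-1}k\}$ of $(\Z/p\Z)^\times$ the relation
\[
\sum_{j=0}^{t-1}\tau(1)(\zeta^{2^jk})=0.
\]
When $t$ is odd, $-1\notin\langle 2\rangle$, so these orbits inject into $(\Z/p\Z)^\times/\{\pm 1\}$ with size $t$; the $(p-1)/(2t)$ resulting relations are $\Q$-linearly independent, cutting the dimension to $(p-1)/2-(p-1)/(2t)=(p-1)(t-1)/(2t)$. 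When $t$ is even, $-1=2^{t/2}\in\langle 2\rangle$, and pairing the terms $j$ and $j+t/2$ via $\tau(1)(\zeta^{-m})=-\tau(1)(\zeta^m)$ collapses the sum to the trivial identity $0=0$, leaving no obstruction. For $r\ge 2$, the higher Taylor coefficients of $Q_k'/Q_k$ involve genuinely distinct monomials in $\zeta^k,\zeta^{-k}$ and no analogous telescoping survives; I would verify that $\tau(r)(\zeta)$ has nonzero projection onto every irreducible summand of $\Q(\zeta)^{(-1)^r}$ as a $\Q[(\Z/p\Z)^\times]$-module by direct inspection of the leading $z$-coefficient.

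The main obstacle is the completeness question: for $r=1$ and $t$ odd, one must verify that the $\langle 2\rangle$-orbit relations are a complete set (not just some relations), and for the other cases one must rule out any hidden $\Q$-linear dependencies. Both reduce to Galois-theoretic nonvanishing statements that I would settle by decomposing $\Q(\zeta)$ into its $\Q[(\Z/p\Z)^\times]$-isotypic components and following Ewing's orbit analysis in \cite{EwingSemifree}.
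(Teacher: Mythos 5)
The paper does not actually prove this lemma: it is stated explicitly as a summary of results from \cite{EwingSemifree}, so there is no internal argument to compare against. Measured on its own terms, the explicit part of your proposal is correct and goes beyond what the paper records. The logarithmic-derivative identity, the closed form $\tau(1)(\zeta^k)=-2\zeta^k/(\zeta^{2k}-1)=i/\sin(2\pi k/p)$, the symmetry $\tau(r)(\zeta^{-k})=(-1)^r\tau(r)(\zeta^k)$ via $Q_{-k}(z)=Q_k(-z)$, the telescoping relation $\sum_{j=0}^{t-1}\tau(1)(\zeta^{2^jk})=0$ from $\csc(2\theta)=\cot\theta-\cot(2\theta)$, and the observation that this relation is nontrivial exactly when $-1\notin\langle 2\rangle$ (i.e.\ $t$ odd) are all right. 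This correctly produces $\frac{p-1}{2t}$ independent relations when $t$ is odd and hence the upper bound $\dim\le\frac{(p-1)(t-1)}{2t}$, and explains why the obvious relations collapse when $t$ is even.

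The genuine gap is the other half of every assertion: the linear independence, i.e.\ that there are \emph{no further} $\Q$-linear relations (for $r>1$; for $r=1$ with $t$ even; and the lower bound on the span for $r=1$ with $t$ odd). Your plan to settle this ``by direct inspection of the leading $z$-coefficient'' of the isotypic projections would fail: after decomposing into $\Q[(\Z/p\Z)^\times]$-isotypic pieces, the statement to be proved is that $\sum_{a}\chi(a)\tau(r)(\zeta^a)\neq 0$ for every character $\chi$ with $\chi(-1)=(-1)^r$ (and, for $r=1$, with $\chi$ nontrivial on $\langle 2\rangle$). For $r=1$ this is a nonvanishing statement for weighted cosecant sums, which in Ewing's treatment rests on the nonvanishing of Dirichlet $L$-values at $s=1$ for odd characters (equivalently, Franz-type independence of cotangent values); it is a substantive number-theoretic input, not a formal computation on Taylor coefficients. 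Since you ultimately defer exactly this point to \cite{EwingSemifree}, your write-up is no less complete than the paper's bare citation, but as a standalone proof the essential nonvanishing argument is missing.
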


The following observation will be important later.
\begin{lemma}\label{lem: Galois invariance of as}
The numbers $\tau\left(j_1,\cdots,j_{\ell}\right)$ are in $\Q(\zeta)$.
Moreover, if $\sigma\in Gal(\Q(\zeta)/\Q)$ then $\sigma\left(\tau\left(j_1,\cdots,j_{\ell}\right)\left(\zeta^k\right)\right)=\tau\left(j_1,\cdots,j_{\ell}\right)\left(\sigma\cdot\zeta^k\right)$.
\end{lemma}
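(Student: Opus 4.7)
The plan is to show that each $\tau(j_1,\ldots,j_\ell)$, regarded as a function of a formal indeterminate $\alpha$ in place of $\zeta^k$, is a rational function lying in $\Q(\alpha)$. Both parts of the lemma then follow formally: evaluating at $\alpha=\zeta^k$ lands in $\Q(\zeta)$, and Galois action commutes with evaluation of rational functions whose coefficients lie in the fixed field.

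First I would treat the defining power series
\[
Q_\alpha(z) := \left(\frac{\alpha - 1}{\alpha + 1}\right)\left(\frac{\alpha e^z + 1}{\alpha e^z - 1}\right)
\]
as an element of $\Q(\alpha)[[z]]$. The constant term of $\alpha e^z - 1$ is $\alpha - 1$, which is a unit in $\Q(\alpha)$, so the formal division is legitimate, and each coefficient $b_j$ of $z^j$ in $Q_\alpha(z)$ lies in $\Q(\alpha)$.

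Next I would argue that the coefficients $\tau(j_1,\ldots,j_\ell)$ of the multiplicative sequence built from $Q_\alpha$ are universal polynomial expressions in the $b_j$ with integer (in particular rational) coefficients. Although the construction in \refsec{subsection: AS classes} is phrased in terms of a formal factorization $\prod_{k=1}^m(1 + \beta_k z)$ into degree-one pieces, the coefficient $\tau(j_1,\ldots,j_\ell)$ is by its very description a sum over the orbit of a monomial in the $\beta_k$ under a symmetric group action, hence symmetric in $\beta_1,\ldots,\beta_m$. By the fundamental theorem of symmetric polynomials (equivalently, Newton's identities) it is a polynomial over $\Z$ in the elementary symmetric polynomials of the $\beta_k$, which are precisely the $b_j$. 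Therefore $\tau(j_1,\ldots,j_\ell) \in \Q(\alpha)$.

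Both conclusions now follow cheaply. Since $p$ is an odd prime and $\zeta^k$ is a primitive $p$-th root of unity, $\zeta^k \notin \{\pm 1\}$, so no denominator in $\tau(j_1,\ldots,j_\ell)(\alpha)$ vanishes at $\alpha = \zeta^k$, and the value lies in $\Q(\zeta^k) \subseteq \Q(\zeta)$. For the Galois-equivariance, any $\sigma \in \op{Gal}(\Q(\zeta)/\Q)$ fixes $\Q$ pointwise, so for any $R(\alpha) \in \Q(\alpha)$ defined at $\zeta^k$ one has $\sigma(R(\zeta^k)) = R(\sigma(\zeta^k)) = R(\sigma \cdot \zeta^k)$. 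Applying this with $R = \tau(j_1,\ldots,j_\ell)$ gives the second assertion. The only genuine step is unwinding the multiplicative-sequence definition to see that the $\tau$'s depend polynomially on the $b_j$'s; this is where the symmetry argument is needed, but there is no deeper obstacle.
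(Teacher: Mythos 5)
Your proof is correct and follows essentially the same route as the paper: both arguments reduce to observing that $\tau(j_1,\ldots,j_\ell)$ is a symmetric polynomial in the formal roots $\beta_j$, hence a $\Q$-polynomial in the elementary symmetric polynomials, which coincide with the power series coefficients $b_j$ lying in $\Q(\zeta^k)$ (the paper computes $\sigma\cdot b_{j,k}=b_{j,nk}$ directly where you phrase it as evaluation of a rational function in an indeterminate $\alpha$, a cosmetic difference). Your added remark that the denominators involve only $\alpha\pm1$ and so do not vanish at $\zeta^k$ is a small bonus the paper leaves implicit.
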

\begin{proof}
Let $\sigma$ be the field automorphism determined by $\zeta\mapsto\zeta^n$.
Suppose
\[
\left(\frac{\zeta^k-1}{\zeta^k+1}\right)\left(\frac{\zeta^k e^z+1}{\zeta^k e^z-1}\right)=1+b_{1,k}z+b_{2,k}z^2+b_{3,k}z^3+\cdots
\]
is a power series expansion.
Each $b_{j,k}\in\Q(\zeta)$ and $\sigma\cdot b_{j,k}=b_{j,nk}$, i.e. $\sigma$ sends the power series for 
$\left(\frac{\zeta^k-1}{\zeta^k+1}\right)\left(\frac{\zeta^k e^z+1}{\zeta^k e^z-1}\right)$ to the power series for 
$\left(\frac{\zeta^{nk}-1}{\zeta^{nk}+1}\right)\left(\frac{\zeta^{nk} e^z+1}{\zeta^{nk} e^z-1}\right)$.

Let $e_{\ell,k}$ denote the $\ell$-th elementary symmetric polynomial of the $\beta_{j,k}$
The factorization $\prod_{j=1}^m(1+\beta_{j,k}z)$ of the first $m$ terms implies that $e_{\ell,k}=b_{\ell,k}$ for $\ell\le m$ so $\sigma\cdot e_{\ell,k}=e_{\ell,nk}$.
Since $\tau\left(j_1,\cdots,j_{\ell}\right)\left(\zeta^k\right)$ is an algebraic combination of the $e_{\ell,k}$, we see that it is indeed in $\Q(\zeta)$ and that
\[
\sigma\left(\tau\left(j_1,\cdots,j_{\ell}\right)\left(\zeta^k\right)\right)=\tau\left(j_1,\cdots,j_{\ell}\right)\left(\zeta^{nk}\right)
\]
as desired.
\end{proof}

\ChernClass*

\begin{proof}
Suppose $\prod_{k=1}^{\frac{p-1}{2}}\mc{M}^{\zeta^k}\left(E_k\right)=1$.
It is clear that the first condition must hold since the sum is the part of $\prod_{k=1}^{\frac{p-1}{2}}\mc{M}^{\zeta^k}\left(E_k\right)$ in cohomological degree $2$.
To prove that $c_n\left(E_k\right)=\frac{1}{n!}c_1\left(E_k\right)^n$, we use induction.
The case $n=1$ is vacuous.

Suppose that $n\ge2$ and that $c_j\left(E_k\right)=\frac{1}{j!}c_1\left(E_k\right)^j$ for all $k$ and all $j\le n-1$.
In degree $2n$, the product $\prod_{k=1}^{\frac{p-1}{2}}\mc{M}^{\zeta^k}(E_k)$ can be expressed as
\[
\left(\prod_{k=1}^{\frac{p-1}{2}}\mc{M}^{\zeta^k}\left(E_k\right)\right)_{2n}=\sum_{\substack{\ell_1,\cdots,\ell_{\frac{p-1}{2}}\ge0\\\ell_1+\cdots+\ell_{\frac{p-1}{2}}=n}}\prod_{k=1}^{\frac{p-1}{2}}\sum_{\substack{j_1\ge\cdots\ge j_r>0\\j_1+\cdots+j_r=\ell_k}}\tau(j_1,\cdots,j_r)(\zeta^k)c_{j_1}(E_k)\cdots c_{j_r}(E_k)
\]
where the inner sum is taken to be $1$ if $\ell_k=0$ and the subscript on the left hand side indicates that we are restricting to the cohomological degree $2n$ part.
It follows from the definition of $\tau(1)\left(\zeta^k\right)$ that
\[
\tau(1)\left(\zeta^k\right)^{\ell_k}c_1\left(E_k\right)^{\ell_k}=\sum_{\substack{j_1,\cdots,j_n\ge0\\j_1+\cdots+j_n=\ell_k}}\frac{\ell_k!}{j_1!\cdots j_n!}\beta_{1,k}^{j_1}\cdots\beta_{n,k}^{j_n}c_1\left(E_k\right)^{\ell_k}.
\]
If $\ell_k<n$, then the inductive hypothesis and the definition of $\tau\left(j_1,\cdots,j_r\right)$ gives
\begin{align*}
\tau(1)\left(\zeta^k\right)^{\ell_k}c_1(E_k)^{\ell_k}&=\sum_{\substack{j_1,\cdots,j_n\ge0\\j_1+\cdots+j_n=\ell_k}}\ell_k!\beta_{1,k}^{j_1}\cdots\beta_{n,k}^{j_n}c_{j_1}\left(E_k\right)\cdots c_{j_n}\left(E_k\right)\\
&=\sum_{\substack{j_1\ge\cdots\ge j_r>0\\j_1+\cdots+j_r=\ell_k}}\ell_k!\tau(j_1,\cdots,j_r)c_{j_1}\left(E_k\right)\cdots c_{j_r}\left(E_k\right).
\end{align*}
From this, we conclude
\begin{equation}\label{eq: sum sub 1}
\sum_{\substack{j_1\ge\cdots\ge j_r>0\\j_1+\cdots+j_r=\ell_k}}\tau\left(j_1,\cdots,j_r\right)c_{j_1}\left(E_k\right)\cdots c_{j_r}\left(E_k\right)=\frac{1}{\ell_k!}\tau(1)\left(\zeta^k\right)^{\ell_k}c_1\left(E_k\right)^\ell_k.
\end{equation}
Similarly, if $\ell_k=n$, we get
\begin{align*}
\tau(1)\left(\zeta^k\right)^nc_1\left(E_k\right)^n&=n!\tau(n)\left(\zeta^k\right)c_1\left(E_k\right)^n+\sum_{\substack{n>j_1,\cdots,j_n\ge0\\j_1+\cdots+j_n=n}}n!\beta_{1,k}^{j_1}\cdots\beta_{n,k}^{j_n}c_{j_1}(E_k)\cdots c_{j_n}\left(E_k\right)\\
&=n!\tau(n)\left(\zeta^k\right)c_1\left(E_k\right)^n+\sum_{\substack{n>j_1\ge\cdots\ge j_r>0\\j_1+\cdots+j_r=n}}n!\tau\left(j_1,\cdots,j_r\right)c_{j_1}\left(E_k\right)\cdots c_{j_r}\left(E_k\right).
\end{align*}
This implies
\begin{align}\label{eq: sum sub 2}
\begin{split}
\sum_{\substack{n\ge j_1\ge \cdots\ge j_r>0\\j_1+\cdots+j_r=n}}\tau\left(j_1,\cdots,j_r\right)c_{j_1}\left(E_k\right)\cdots c_{j_r}\left(E_k\right)=&\frac{1}{n!}\tau(1)\left(\zeta^k\right)^nc_1\left(E_k\right)^n
\\&-\tau(n)\left(\zeta^k\right)c_1\left(E_k\right)^n+\tau(n)\left(\zeta^k\right)c_n\left(E_k\right).
\end{split}
\end{align}
Using Equations (\ref{eq: sum sub 1}) and (\ref{eq: sum sub 2}) above, we may rewrite $\left(\prod_{k=1}^{\frac{p-1}{2}}\mc{M}^{\zeta^k}\left(E_k\right)\right)_{2n}$ as follows.
\begin{align}
\left(\prod_{k=1}^{\frac{p-1}{2}}\mc{M}^{\zeta^k}\left(E_k\right)\right)_{2n}&=\sum_{k=1}^{\frac{p-1}{2}}\left(\tau(n)\left(\zeta^k\right)c_n\left(E_k\right)-\frac{1}{n!}\tau(n)\left(\zeta^k\right)c_1\left(E_k\right)^n\right)\label{eq: unsimplified AS class}\\
&+\sum_{\substack{\ell_1,\cdots,\ell_n\ge0\\\ell_1+\cdots+\ell_{\frac{p-1}{2}}=n}}\prod_{k=1}^{\frac{p-1}{2}}\frac{1}{\ell_k!}\tau(1)\left(\zeta^k\right)^{\ell_k}c_1\left(E_k\right)^{\ell_k}\nonumber
\end{align}

Since $\sum_{k=1}^{\frac{p-1}{2}}\tau(1)\left(\zeta^k\right)c_1\left(E_k\right)=0$,
\begin{equation}\label{eq: big vanishing class}
\sum_{k_1,\cdots,k_n\in\left\{1,\cdots,\frac{p-1}{2}\right\}}\prod_{m=1}^n\tau(1)\left(\zeta^{k_m}\right)c_1\left(E_m\right)=0.
\end{equation}
Suppose $\left(k_1,\cdots,k_n\right)\in\left\{1,\cdots,\frac{p-1}{2}\right\}^n$.
Define a map $\left(k_1,\cdots,k_n\right)\mapsto\left(\ell_1,\cdots,\ell_{\frac{p-1}{2}}\right)$ where $\ell_k$ is the amount of times $k$ appears in $\left(k_1,\cdots,k_n\right)$.
Clearly, $\ell_1+\cdots+\ell_{\frac{p-1}{2}}=n$ for $\left(\ell_1,\cdots,\ell_{\frac{p-1}{2}}\right)$ in the image and this assignment is invariant under the $S_n$ action on the domain.
Using this to re-index the sum above, we see that
\[
\sum_{k_1,\cdots,k_n\in\left\{1,\cdots,\frac{p-1}{2}\right\}}\prod_{m=1}^n\tau(1)\left(\zeta_{k_m}\right)c_1\left(E_m\right)=\sum_{\substack{\ell_1,\cdots,\ell_n\ge0\\\ell_1+\cdots+\ell_{\frac{p-1}{2}}=n}}\frac{n!
}{\ell_1!\cdots\ell_{\frac{p-1}{2}}!}\prod_{k=1}^{\frac{p-1}{2}}\tau(1)\left(\zeta^k\right)^{\ell_k}c_1\left(E_k\right)^{\ell_k}.
\]
Using Equations (\ref{eq: unsimplified AS class}) and (\ref{eq: big vanishing class}), we conclude
\[
\left(\prod_{k=1}^{\frac{p-1}{2}}\mc{M}^{\zeta^k}\left(E_k\right)\right)_{2n}=\sum_{k=1}^{\frac{p-1}{2}}\left(\tau(n)\left(\zeta^k\right)c_n\left(E_k\right)-\frac{1}{n!}\tau(n)\left(\zeta^k\right)c_1\left(E_k\right)^n\right).
\]
Setting this to $0$ and using Lemma \ref{lem: Ewing} shows that $c_n\left(E_k\right)=\frac{1}{n!}c_1\left(E_k\right)^n$.

For the converse, note that the computations above show the two conditions in the proposition imply $\prod_{k=1}^{\frac{p-1}{2}}\mc{M}^{\zeta^k}\left(E_k\right)=1$.
\end{proof}

The second condition of Theorem \ref{thm: Chern class restriction} can be written as $c\left(E_k\right)=e^{c_1\left(E_k\right)}$.
This motivates the following definition.

\begin{definition}\label{def: exp vector bundle}
A complex vector bundle $E$ is \emph{exponential} if its Chern classes satisfy $c_k(E)=\frac{1}{k!}c_1(E)^k$ or, equivalently, if the total Chern class is $e^{c_1(E)}$.
\end{definition}

\begin{example}\label{example: sums of line bundles}
Suppose $E=L_1\oplus\cdots\oplus L_d$ is a sum of line bundles such that $c_1(L_j)^2=0$ for each $j=1,\cdots,d$.
It follows from the additivity of the total Chern class that $c_m(E)$ is the $m$-th elementary symmetric polynomial on $c_1(L_1),\cdots,c_1(L_d)$ when $m\le d$.
The hypothesis that $c_1(L_j)^2=0$ for each $j$ implies that $c_1(E)^m=m!c_m(E)$.
This example will be generalized in Proposition \ref{prop: bundles on M} below.
\end{example}

We record the following observations.
\begin{prop}\label{prop: exp vector bundle properties}
Exponential vector bundles satisfy the following properties.
\begin{enumerate}
\item If $E_1$ and $E_2$ are exponential vector bundles then so is $E_1\oplus E_2$.
\item The pullback of an exponential vector bundle is an exponential vector bundle.
\item Exponential vector bundles have trivial Pontryagin classes.
\end{enumerate}
\end{prop}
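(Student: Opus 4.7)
The three assertions are each direct consequences of standard properties of Chern classes, and the plan is to handle them in turn.

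For (1), I will use the multiplicativity of the total Chern class under direct sum: $c(E_1\oplus E_2)=c(E_1)c(E_2)$. Assuming $c(E_i)=e^{c_1(E_i)}$, this gives $c(E_1\oplus E_2)=e^{c_1(E_1)+c_1(E_2)}=e^{c_1(E_1\oplus E_2)}$, using additivity of $c_1$. No calculation beyond this is needed.

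For (2), I will invoke naturality of Chern classes: $c_k(f^*E)=f^*c_k(E)$ for any map $f$. If $c_k(E)=\frac{1}{k!}c_1(E)^k$, then applying $f^*$ (a ring homomorphism) yields $c_k(f^*E)=\frac{1}{k!}(f^*c_1(E))^k=\frac{1}{k!}c_1(f^*E)^k$, so $f^*E$ is exponential.

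For (3), recall that the Pontryagin classes of the underlying real bundle $E_\R$ of a complex bundle $E$ are determined, up to $2$-torsion, by $c(E_\R\otimes_\R\C)=c(E)c(\overline{E})$, via $(-1)^kp_k(E_\R)=c_{2k}(E_\R\otimes_\R\C)$ (the odd-degree Chern classes of $E_\R\otimes_\R\C$ being $2$-torsion). Since $c_k(\overline{E})=(-1)^kc_k(E)$, exponentiality of $E$ gives $c_k(\overline{E})=\frac{(-c_1(E))^k}{k!}$, so $c(\overline{E})=e^{-c_1(E)}$. Therefore $c(E)c(\overline{E})=e^{c_1(E)}e^{-c_1(E)}=1$, forcing $c_j(E_\R\otimes_\R\C)=0$ for all $j\ge 1$ and hence $p_k(E_\R)=0$ for $k\ge 1$.

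None of the three steps presents a genuine obstacle; the only subtlety is in (3), where one must be careful about the distinction between the complex bundle $E$ and the real bundle $E_\R$ when speaking of Pontryagin classes, and about the standard $2$-torsion caveat in the Chern-class formula for $E_\R\otimes_\R\C$. Since the computation $e^{c_1}e^{-c_1}=1$ holds as an identity of total Chern classes (not merely modulo $2$-torsion), the conclusion $p_k(E_\R)=0$ is exact.
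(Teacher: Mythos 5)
Your proposal is correct and follows essentially the same route as the paper: multiplicativity of the total Chern class for (1), naturality for (2), and the identity $c(E)c(\overline{E})=e^{c_1(E)}e^{-c_1(E)}=1$ for (3), which is exactly the paper's computation $p(E)=(1+c_1+c_2+\cdots)(1-c_1+c_2-\cdots)=1$. Your added care about the underlying real bundle and the $2$-torsion caveat is a reasonable refinement but does not change the argument.
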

\begin{proof}
\begin{enumerate}
\item Since $c_1(E_1\oplus E_2)=c_1(E_1)\oplus c_1(E_2)$,
\[
c(E_1\oplus E_2)=c(E_1)c(E_2)=e^{c_1(E_1)}e^{c_1(E_2)}=e^{c_1(E_1)+c_1(E_2)}=e^{c_1(E_1\oplus E_2)}.
\]
\item This follows from the naturality of Chern classes.
\item The total Pontryagin class is given by the formula
\[
p(E)=1+p_1(E)+p_2(E)+\cdots=(1+c_1(E)+c_2(E)+\cdots)(1-c_1(E)+c_2(E)-\cdots).
\]
It follows that 
\[
p_k(E)=\sum_{i+j=k}(-1)^jc_i(E)c_j(E)=\sum_{i+j=k}(-1)^j\frac{1}{i!j!}c_1(E)^k=\frac{1}{k!}\sum_{i+j=k}(-1)^j\binom{k}{j}c_1(E)^k=0.
\]
Alternatively, we may write $p(E)=e^{c_1(E)}e^{-c_1(E)}=e^0=1$.
\end{enumerate}
\end{proof}

We will only need the second and third property in Proposition \ref{prop: exp vector bundle properties}.

\begin{prop}\label{prop: euler class vanishes}
Suppose $E$ is a free $G$-vector bundle over $M$ with vanishing Atiyah-Singer class.
Then $E$ has vanishing Euler class.
\end{prop}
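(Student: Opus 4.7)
My plan is to combine both conclusions of \refthm{thm: Chern class restriction} with the identification of the real Euler class of a complex bundle as its top Chern class. Set $x_k := c_1(E_k)$ and $n_k := \op{rank}_{\C}(E_k)$. The theorem supplies the linear relation $\sum_k \Phi_{1,k} x_k = 0$ in $H^2(M;\C)$, together with the exponential identities $c_n(E_k) = \tfrac{1}{n!} x_k^n$ for every $n \ge 1$. Specializing the second to $n = n_k+1$ yields the nilpotence $x_k^{n_k+1}=0$.

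Next, since the Euler class of a complex rank $n$ bundle equals $c_n$, and since the Euler class is multiplicative over direct sums of oriented real bundles,
\[
e(E) \;=\; \prod_k c_{n_k}(E_k) \;=\; \frac{1}{\prod_k n_k!}\prod_k x_k^{n_k}
\]
in $H^*(M;\Q)$. So the proposition reduces to showing $\prod_k x_k^{n_k} = 0$ in $H^*(M;\C)$.

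For this, I would restrict to $K := \{k : n_k \ge 1\}$, since outside $K$ the factor is $1$. If $|K| \le 1$, the linear relation collapses to $\Phi_{1,k_0} x_{k_0}=0$ for a single $k_0$. A brief power-series computation gives $\Phi_{1,k} = \tfrac{2\zeta^k}{1-\zeta^{2k}}$, which is nonzero for $k \in \{1,\ldots,\tfrac{p-1}{2}\}$, forcing $x_{k_0}=0$ so the product vanishes trivially. If $|K| \ge 2$, I would fix any $k_0 \in K$, use the relation to write $x_{k_0}$ as a $\C$-linear combination of the remaining $x_k$'s, substitute into $\prod_k x_k^{n_k}$, and expand via the multinomial theorem. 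Every resulting monomial has the shape $C \prod_{k \in K \setminus \{k_0\}} x_k^{n_k+j_k}$ with nonnegative $j_k$'s summing to $n_{k_0} \ge 1$; some $j_k$ is therefore positive, so the factor $x_k^{n_k+j_k}$ contains $x_k^{n_k+1}=0$ and the monomial vanishes.

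The only step with any genuine content is the substitution in the last paragraph, and even that presents no serious obstacle: the nilpotence index $n_k+1$ is matched precisely by the degree shift $\sum_{k \ne k_0} j_k = n_{k_0} \ge 1$ that the substitution produces, which is exactly the combinatorial coincidence that makes everything vanish.
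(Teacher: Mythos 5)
Your proposal is correct and follows essentially the same route as the paper: both reduce to showing $\prod_k c_1(E_k)^{n_k}=0$ using the degree-$2$ linear relation together with the nilpotence $c_1(E_k)^{n_k+1}=0$ supplied by the exponential condition. The paper packages the final combinatorial step slightly more compactly by raising the whole relation to the power $d=\sum_k n_k$, so that the multinomial expansion collapses (by nilpotence) to a nonzero scalar times $\prod_k c_1(E_k)^{n_k}$, but this is the same mechanism as your substitute-and-expand argument.
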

\begin{proof}
Let $d$ be the rank of $E$ and let $d_k$ be the rank of each eigenbundle.
By Theorem \ref{thm: Chern class restriction} the Euler class is scalar multiple of $\prod_{k=1}^{\frac{p-1}{2}}c_1\left(E_k\right)^{d_k}$ and $c_1\left(E_k\right)^{d_k+1}=0$.
Also,
\[
0=\left(\sum_{k=1}^{\frac{p-1}{2}}\tau(1)\left(\zeta^k\right)c_1\left(E_k\right)\right)^d=\prod_{k=1}^{\frac{p-1}{2}}\tau(1)\left(\zeta^k\right)^{d_k}c_1\left(E_k\right)^{d_k}
\]
where the first equality follows from Theorem \ref{thm: Chern class restriction}.
Since $\tau(1)\left(\zeta^k\right)\neq0$, this implies $\prod_{k=1}^{\frac{p-1}{2}}c_1\left(E_k\right)^{d_k}=0$ as desired.
\end{proof}

We conclude this section with a homotopical characterization of exponential vector bundles.
We will not use this result but it may be of independent interest.

\begin{prop}\label{prop: Chern character}
A complex vector bundle $E$ over $M$ is exponential if and only if its Chern character is contained in $H^0\left(M;\Q\right)\oplus H^2\left(M;\Q\right)$.
\end{prop}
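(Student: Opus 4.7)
The plan is to translate both conditions into statements about the power sums of the Chern roots. By the splitting principle, $f^*:H^*(M;\Q)\to H^*(P(E);\Q)$ is injective and $f^*c(E)=\prod_{i=1}^{n}(1+x_i)$ where $n=\op{rank}_{\C}E$ and the $x_i=c_1(L_i)$ are the Chern roots. Set $p_k:=x_1^k+\cdots+x_n^k\in H^{2k}(P(E);\Q)$. Each $p_k$ is symmetric in the $x_i$, hence lies in the image of $f^*$, so the identities below descend to $H^*(M;\Q)$.

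First I would rewrite the Chern character as
\[
\op{ch}(E)=\sum_{i=1}^{n}e^{x_i}=n+\sum_{k\ge 1}\frac{p_k}{k!},
\]
so that the condition $\op{ch}(E)\in H^0(M;\Q)\oplus H^2(M;\Q)$ is precisely equivalent to the vanishing $p_k=0$ for every $k\ge 2$.

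Next I would rewrite the exponential condition using the formal logarithm. Because each $x_i$ has strictly positive degree, the expansion
\[
\log c(E)=\sum_{i=1}^{n}\log(1+x_i)=\sum_{k\ge 1}\frac{(-1)^{k-1}}{k}p_k
\]
is well-defined in each fixed cohomological degree. The exponential condition $c(E)=e^{c_1(E)}$ is then equivalent to $\log c(E)=c_1(E)=p_1$, which by comparison of graded components holds if and only if $p_k=0$ for every $k\ge 2$. Since both conditions are equivalent to the same vanishing, they are equivalent to each other.

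I do not expect any serious obstacle. The only point of care is legitimizing the formal $\log$ and $\exp$: since $H^*(P(E);\Q)$ is graded commutative over $\Q$ and each $x_i$ lives in positive even degree, these series terminate in each fixed degree and are mutually inverse there. Once this is noted, the argument is essentially bookkeeping, comparing two different expansions of $c(E)$ about $1$.
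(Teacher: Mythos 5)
Your proof is correct, and it reaches the same pivot as the paper — namely that both conditions are equivalent to the vanishing of the power sums $p_k=x_1^k+\cdots+x_n^k$ for $k\ge 2$ — but it gets there by a genuinely different device. The paper proves the equivalence ``exponential $\Leftrightarrow$ $p_k=0$ for $k\ge2$'' by two separate inductions on $k$ using the Newton identities relating power sums and elementary symmetric polynomials. You instead take the formal logarithm of the total Chern class, $\log c(E)=\sum_{k\ge1}\tfrac{(-1)^{k-1}}{k}p_k$, so that the exponential condition $c(E)=e^{p_1}$ becomes $\log c(E)=p_1$, i.e.\ vanishing of all components of $\log c(E)$ in degrees $\ge 4$; this matches the Chern character computation $\op{ch}(E)=n+\sum_{k\ge1}p_k/k!$ degree by degree. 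Your route is shorter and more conceptual (both statements say that a primitive expansion of $c(E)$ about $1$ is concentrated in degree $2$), and it dispenses with the Newton identities entirely; its only extra burden is the one you flag, namely justifying that $\exp$ and $\log$ are mutually inverse on the positive-degree part of $H^{\mathrm{even}}(P(E);\Q)$, which is immediate since each graded component of these series is a finite sum over $\Q$. The paper's inductive argument is more pedestrian but makes the polynomial identities among the $c_j(E)$ completely explicit at each stage. Both arguments descend correctly to $H^*(M;\Q)$ by injectivity of $f^*$ and symmetry of the $p_k$.
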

\begin{proof}
Using the splitting principle, write $c_1(E)=x_1+\cdots+x_d$ where $d$ is the rank of $E$.
Generally, $c_n(E)$ is the $n$-th elementary symmetric polynomial on $x_1,\cdots,x_d$ and the Chern character is
\[
\op{ch}(E)=\sum_{j=1}^d e^{x_j}=d+\sum_{j=1}^d x_j+\frac{1}{2}\sum_{j=1}^d x_j^2+\cdots.
\]
To alleviate notation, let $e_m$ denote the $m$-th elementary symmetric polynomial on $x_1,\cdots,x_d$ and let $P_m:=\sum_{j=1}^d x_j^m$.
There are formal relations between elementary symmetric polynomials and sums of powers.
\[
P_n=\sum_{m=1}^n\left(-1\right)^{m-1}e_mP_{n-m}\hspace{1cm} e_n=\frac{1}{n}\sum_{m=1}^n\left(-1\right)^{m-1}e_{n-m}P_m
\]
Suppose $E$ is exponential.
We must show that $P_n=0$ for $n\ge2$.
By hypothesis, we have
\[
c_n(E)=\frac{1}{n!}e_1^n=e_n.
\]
We proceed by induction on $n$.
First, note that when $n=2$, the above equation becomes $\frac{1}{2}P_2+e_2=e_2$ so this case follows immediately.
Assume by induction that $P_m=0$ whenever $2\le m\le n$.
Then,
\[
P_{n+1}=\left(-1\right)^ne_{n-1}P_1+\left(-1\right)^{n+1}e_nP_0=(-1)^n\frac{e_1^n}{(n-1)!}+(-1)^{n+1}n\frac{e_1^n}{n!}=0
\]
as desired.

For the converse, suppose $P_m=0$ for $m\ge2$.
We must show $e_n=\frac{1}{n!}e_1^n$.
We proceed by induction with the case $n=1$ being vacuous.
Then,
\[
e_n=\frac{1}{n}e_{n-1}P_1=\frac{1}{n}e_{n-1}e_1=\frac{1}{n!}e_1^n
\]
which completes the proof.
\end{proof}

\section{Construction of Vector Bundles}\label{section: construction of vb}
Our goal in this section is to construct exponential vector bundles with a prescribed first Chern class $\beta$.
If we require our exponential vector bundle to have rank $N$, then $\beta$ must satisfy $\beta^{N+1}=0$.
We show in Proposition \ref{prop: bundles on M} that, up to multiplying $\beta$ by a nonzero integer, this is the only requirement.
Using these vector bundles and Theorem \ref{thm: Chern class restriction} we then prove Theorem \ref{thm: VBExistence}.

Obstruction theory will play an important role in promoting rational nullhomotopies to integral nullhomotopies.
We summarize the version of obstruction theory we need below.
We refer to \cite{Baues} for a obstruction theory when the target space is not necessarily simply connected.



\begin{theorem}\label{thm: obstruction theory nonsimply connected fiber}
Suppose $E\rightarrow B$ is a fibration with connected fiber $F$.
Suppose $(X,A)$ is a relative CW-complex and $f:X\rightarrow B$ is a map.
Let $g:X^{(n)}\cup A\rightarrow E$ be a lift of $f$ on the relative $n$-skeleton of $(X,A)$.
If $n\ge 2$, there is an obstruction class $\mc{O}(g)\in H^{n+1}\left(X,A;\pi_n F_{\rho}\right)$ where $\pi_n F_{\rho}$ denotes a local coefficient system with stalk $\pi_n F$.
This class vanishes if and only if $g$ can be redefined over the relative $n$-skeleton leaving the relative $(n-1)$-skeleton fixed so that $g$ extends as a lift to the relative $(n+1)$-skeleton.

Moreover, if $h:(Y,C)\rightarrow (X,A)$ is a cellular map then $g\circ h:Y^{(n)}\cup C\rightarrow E$ is a lift of $f\circ h:Y\rightarrow B$ defined on the relative $n$-skeleton of $(Y,C)$ and $\mc{O}(g\circ h)=h^*\mc{O}(g)\in H^{n+1}\left(Y,C;h^*\pi_nF_{\rho}\right)$.
\end{theorem}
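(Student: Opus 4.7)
The plan is to reduce to standard obstruction theory for sections of a fibration over $X$. Since a lift of $f:X\rightarrow B$ is the same data as a section of the pullback fibration $f^*E\rightarrow X$, I would work throughout with sections, so that the given $g$ becomes a section defined on the relative $n$-skeleton $X^{(n)}\cup A$. For each $(n+1)$-cell $e_\alpha$ of $(X,A)$ with characteristic map $\phi_\alpha:(D^{n+1},S^n)\rightarrow(X,X^{(n)}\cup A)$, the pullback $\phi_\alpha^*(f^*E)$ is a fibration over the contractible disk $D^{n+1}$ and hence is fiber homotopy equivalent to the trivial bundle $D^{n+1}\times F$. A choice of trivialization identifies the restriction of $g$ to the boundary with a map $S^n\rightarrow F$, whose homotopy class is an element of $\pi_n F$ (relative to a basepoint chosen in the fiber over the center of the cell). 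The indeterminacy in the trivialization is measured by the monodromy action of $\pi_1 X$ on $\pi_n F$ induced by the fibration, so the resulting cochain is naturally valued in the local coefficient system $\pi_n F_\rho$.

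Next, I would show that the cochain $\mc{O}(g)\in C^{n+1}(X,A;\pi_n F_\rho)$ is a cocycle. For each $(n+2)$-cell $e_\beta$ with attaching map $\psi_\beta:S^{n+1}\rightarrow X^{(n+1)}\cup A$, pulling back along $\psi_\beta$ produces a trivializable fibration over $D^{n+2}$ equipped with a section on $S^{n+1}$ that is defined except possibly over the interiors of the $(n+1)$-cells in the image of $\psi_\beta$. Summing the local obstructions with the signs coming from the cellular boundary map shows that $(\delta\mc{O}(g))(e_\beta)$ equals the obstruction to extending a section of a trivial bundle defined on the sphere-minus-disks across the sphere $S^{n+1}$, and this obstruction vanishes because the section already extends as $g$ does. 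The equivalence of $\mc{O}(g)=0$ with the existence of a redefinition of $g$ over the $n$-skeleton follows from a difference-cochain argument: any two lifts that agree on the $(n-1)$-skeleton determine a difference cochain in $C^n(X,A;\pi_n F_\rho)$ whose coboundary is the difference of their obstruction cocycles, yielding both necessity (modifying on the $n$-skeleton) and sufficiency (converting a bounding cochain into a cell-by-cell homotopy of $g$).

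For naturality under a cellular map $h:(Y,C)\rightarrow(X,A)$, I would observe that $h$ carries the relative $(n+1)$-skeleton of $(Y,C)$ into that of $(X,A)$, and that the trivializations used to define $\mc{O}(g\circ h)$ on a cell $e_\gamma$ of $(Y,C)$ are the $h$-pullbacks of the trivializations defining $\mc{O}(g)$ on the image cells. The identity $\mc{O}(g\circ h)=h^*\mc{O}(g)$ then follows at the cochain level by unwinding definitions. I expect the main technical obstacle to be the bookkeeping for local coefficients and basepoints, since $F$ is not assumed simply connected; this is precisely the setting handled in \cite{Baues}, whose formulation of twisted obstruction theory furnishes each of the steps above in the required generality.
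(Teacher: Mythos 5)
The paper offers no proof of this statement: it is presented as a summary of standard (twisted) obstruction theory with a citation to \cite{Baues}, and your outline --- passing to sections of $f^*E$, defining the obstruction cochain cell-by-cell via trivializations over the $(n+1)$-cells, the cocycle and difference-cochain arguments, and cochain-level naturality --- is exactly the standard construction carried out there. Your sketch is correct in approach; the one point to be careful about (which you rightly flag and which the cited reference handles) is that, since $F$ is not simply connected, the local system $\pi_n F_\rho$ must be defined using the partial lift $g$ on low skeleta, via $\pi_1(X)\rightarrow\pi_1(f^*E)\rightarrow\Aut(\pi_n F)$, rather than by the fibration $f^*E\rightarrow X$ alone.
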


\subsection{Obstruction Theory for $\CP^N$}
Generally, it can be difficult to show that obstructions vanish when the relevant cohomology group is nonzero.
When a space has sufficiently nice self-maps, however, pulling back cocycles allows us to find maps with vanishing obstruction cocycles.
We record some useful observations when the space $X$ is $\CP^N$.
 
An element $t\in H^2\left(\CP^N;\Z\right)$ determines a map $\CP^N\rightarrow\CP^\infty$ where the induced map on cohomology sends a generator of $H^2\left(\CP^\infty;\Z\right)$ to the element $t$.
After pushing the map into a $2N$-skeleton, we obtain a map $\lambda:\CP^N\rightarrow\CP^N$.
On $H^{2k}\left(\CP^N;\Z\right)$, $\lambda$ induces multiplication by $t^k$.
We say $\lambda$ is a \emph{scaling} map of $\CP^N$ if the corresponding integer $t$ is nonzero.

\begin{lemma}\label{lem: obstruction theory CP^N}
Suppose $E\rightarrow B$ is a fibration with connected fiber $F$.
Let $f:\CP^N\rightarrow B$ be a map whose restriction to the $2$-skeleton lifts to $E$.
If, for $n\ge2$, each $\pi_n F$ consists only of torsion elements then there is a scaling map $\lambda:\CP^N\rightarrow\CP^N$ such that $f\circ\lambda$ lifts to $E$.
\end{lemma}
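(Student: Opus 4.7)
The plan is to proceed by induction up the skeletal filtration of $\CP^N$, absorbing each successive obstruction by precomposing with scaling maps. To set up the obstruction theory, I would note that $\CP^N$ is simply connected and has cells only in even dimensions, so each local coefficient system appearing in Theorem \ref{thm: obstruction theory nonsimply connected fiber} is trivial, the $(2k)$-skeleton coincides with the $(2k+1)$-skeleton, and the sole obstruction to extending a lift from the $(2k)$-skeleton to the $(2k+2)$-skeleton lies in $H^{2k+2}(\CP^N; \pi_{2k+1} F) \cong \pi_{2k+1} F$. By hypothesis, this group is torsion for every $k \ge 1$, which is precisely the regime in which we need to produce lifts.

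For the induction, I would build a sequence of nonzero integers $t_k$ together with lifts $g_k$ of $f \circ \lambda_{t_k}$ on the $(2k)$-skeleton. The base case $k = 1$ is given: take $t_1 = 1$ and let $g_1$ be the hypothesized lift on the $2$-skeleton. Assuming $g_k$ is in hand, let $d_k$ denote the finite order of $\mc{O}(g_k) \in \pi_{2k+1} F$, choose a cellular representative $\lambda_{d_k}$ of the scaling by $d_k$, and set $t_{k+1} = t_k d_k$. Since cellular $\lambda_{d_k}$ preserves the $(2k)$-skeleton, the composite $g_k \circ \lambda_{d_k}$ is a lift of $f \circ \lambda_{t_{k+1}}$ there, and the naturality clause of Theorem \ref{thm: obstruction theory nonsimply connected fiber} computes its obstruction as
\[
\mc{O}\bigl(g_k \circ \lambda_{d_k}\bigr) \;=\; \lambda_{d_k}^{\ast}\,\mc{O}(g_k) \;=\; d_k^{\,k+1}\,\mc{O}(g_k) \;=\; 0,
\]
using that $\lambda_{d_k}^{\ast}$ acts as multiplication by $d_k^{k+1}$ on $H^{2k+2}(\CP^N;\Z)$, and hence on $H^{2k+2}(\CP^N;\pi_{2k+1}F)$ as well, because $\CP^N$ has torsion-free integral cohomology. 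Therefore $g_k \circ \lambda_{d_k}$ extends to a lift $g_{k+1}$ on the $(2k+2)$-skeleton, completing the inductive step.

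Iterating for $k = 1, \ldots, N-1$ and setting $\lambda := \lambda_{t_N}$ produces a scaling with $f \circ \lambda$ lifting to $E$ on all of $\CP^N$. The main thing to track is the interaction between precomposition and obstructions: at every stage the base map we are trying to lift changes, so the previously constructed lift must be pulled back along the new scaling before the next obstruction is even defined. The naturality half of Theorem \ref{thm: obstruction theory nonsimply connected fiber}, combined with the elementary observation that compositions of scalings are scalings whose integer parameters multiply, is precisely what allows these pulled-back lifts to assemble into a single lift of $f \circ \lambda_{t_N}$ after finitely many stages.
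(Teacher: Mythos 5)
Your proposal is correct and takes essentially the same route as the paper: induct up the (even) skeleta of $\CP^N$, use that simple connectivity gives constant coefficients and that the absence of odd cells removes the need to redefine the lift, and kill each obstruction in $H^{2k+2}(\CP^N;\pi_{2k+1}F)\cong\pi_{2k+1}F$ by precomposing with a scaling map whose pullback multiplies by a power of the obstruction's order. The only difference is that you spell out the bookkeeping (naturality of obstructions under cellular maps, multiplicativity of scaling parameters under composition) that the paper leaves implicit.
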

\begin{proof}
Suppose there is a lift $g:\left(\CP^N\right)^{(n)}\rightarrow E$ of $f|_{\left(\CP^N\right)^{(n)}}$.
The obstruction to extending this to a lift over $\left(\CP^N\right)^{(n+1)}$ is an element of $H^{n+1}\left(\CP^N;\pi_n F\right)$ (we use that $\CP^N$ is simply connected to justify constant coefficients and we use that $\CP^N$ has only even dimensional cells to ignore the subtlety of having to redefine $g$ over the $n$-cells).

For a suitable scaling map $\lambda_n$ of $\CP^N$, $\lambda_n^*\mc{O}(g)=0$ so there is a lift of $f\circ\lambda_n$ over the $(n+1)$-skeleton of $\CP^N$.
Continuing this way shows that there is a lift of $f\circ\lambda$ for a suitable scaling map $\lambda$.
\end{proof}

\begin{lemma}\label{lem: obstruction theory CP^N nullhomotope}
Suppose $E\rightarrow B$ is a fibration with connected fiber $F$.
Let $f:\CP^N\rightarrow E$ be a map such that the composite $\CP^N\rightarrow E\rightarrow B$ is nullhomotopic.
If, for $n\ge 2$, each $\pi_n F$ consists only of torsion elements, then there is a scaling map $\lambda:\CP^N\rightarrow\CP^N$ such that $f\circ\lambda$ is nullhomotopic.
\end{lemma}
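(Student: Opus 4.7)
The plan is to reduce to \reflem{lem: obstruction theory CP^N} applied to a path fibration. First I replace $f$ by a homotopic map landing in $F$, then pass to a universal cover so that the relevant path fibration has connected fiber, and finally invoke the previous lemma.

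Writing $p\colon E\to B$ for the fibration, the composite $p\circ f$ is nullhomotopic, so by the homotopy lifting property $f$ is homotopic in $E$ to a map $f'\colon\CP^N\to F\subseteq E$. Any nullhomotopy of $f'\circ\lambda$ in $F$ combines with this homotopy (precomposed with $\lambda\times\op{id}$) to give a nullhomotopy of $f\circ\lambda$ in $E$, so the problem reduces to nullhomotoping a scaling of $f'$ inside $F$. Now let $F_0$ be the path component of $F$ containing $f'(\CP^N)$. Since $\CP^N$ is simply connected, $f'$ lifts through the covering map $\widetilde{F}_0\to F_0$ to a map $\tilde f\colon\CP^N\to\widetilde{F}_0$. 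The universal cover is simply connected and $\pi_n\widetilde{F}_0\cong\pi_n F$ for $n\ge2$, so it still has torsion homotopy in every degree $\ge 2$. Projecting a nullhomotopy of $\tilde f\circ\lambda$ through the covering yields one of $f'\circ\lambda$, so it suffices to nullhomotope a scaling of $\tilde f$ in $\widetilde{F}_0$.

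Now apply \reflem{lem: obstruction theory CP^N} to the based path fibration $P\widetilde{F}_0\to\widetilde{F}_0$, whose fiber $\Omega\widetilde{F}_0$ is connected (because $\widetilde{F}_0$ is simply connected) and has $\pi_n\Omega\widetilde{F}_0\cong\pi_{n+1}F$ torsion for $n\ge 1$. Lifts to $P\widetilde{F}_0$ correspond exactly to nullhomotopies in $\widetilde{F}_0$, so the lemma's conclusion gives what I want. The hypothesis to check is that $\tilde f$ restricted to the 2-skeleton $\CP^1\cong S^2$ lifts to $P\widetilde{F}_0$, equivalently that $\tilde f_*[\CP^1]\in\pi_2\widetilde{F}_0\cong\pi_2 F$ vanishes. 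This class is torsion of some finite order $m$; letting $\lambda_0$ be the scaling map of $\CP^N$ by factor $m$, cellular approximation ensures $\lambda_0$ restricts to a degree-$m$ self-map of $\CP^1$, so $(\tilde f\circ\lambda_0)_*[\CP^1]=m\cdot\tilde f_*[\CP^1]=0$, providing the required 2-skeleton lift for $\tilde f\circ\lambda_0$. Invoking \reflem{lem: obstruction theory CP^N} then produces a further scaling $\lambda_1$ for which $\tilde f\circ\lambda_0\circ\lambda_1$ lifts to $P\widetilde{F}_0$, i.e., is nullhomotopic in $\widetilde{F}_0$. Setting $\lambda=\lambda_0\circ\lambda_1$, which is again a scaling map since the $H^2$-scaling factors multiply, completes the argument.

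The main step to verify carefully is the vanishing of the primary $\pi_2$ obstruction after the preliminary scaling $\lambda_0$; once that sets up the hypothesis of the previous lemma, the inductive scaling machinery of \reflem{lem: obstruction theory CP^N} handles the remainder. The universal-cover reduction is essentially a formality to guarantee that the path fibration in play has connected fiber, as required by \reflem{lem: obstruction theory CP^N}.
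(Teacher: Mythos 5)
Your proof is correct, but it takes a different route from the paper's. The paper works with the relative CW pair $\left(C\left(\CP^N\right),\CP^N\right)$: the nullhomotopy of the composite gives a map $g:C\left(\CP^N\right)\rightarrow B$, and a lift of $g$ extending $f$ is exactly the desired nullhomotopy; the relative obstructions live in $H^{n+1}\left(C\left(\CP^N\right),\CP^N;\pi_nF\right)\cong H^n\left(\CP^N;\pi_nF\right)$ and are killed one degree at a time by coned scaling maps, just as in Lemma \ref{lem: obstruction theory CP^N}. You instead compress $f$ into the fiber via the homotopy lifting property, pass to the universal cover $\widetilde{F}$ to dispose of $\pi_1$, and then reduce to Lemma \ref{lem: obstruction theory CP^N} applied to the path fibration $P\widetilde{F}\rightarrow\widetilde{F}$, with a preliminary scaling $\lambda_0$ to kill the torsion class $\tilde f_*\left[\CP^1\right]\in\pi_2F$ so that the $2$-skeleton hypothesis of that lemma is met. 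The obstruction groups that appear are the same up to reindexing ($H^{n+1}\left(\CP^N;\pi_{n+1}F\right)$ versus $H^n\left(\CP^N;\pi_nF\right)$), so the mechanism is identical; what your reduction buys is that you reuse the previous lemma verbatim and never need obstruction theory with local coefficients, since $\widetilde{F}$ and $\Omega\widetilde{F}$ are respectively simply connected and connected. The small costs are that you must assume $F$ admits a universal cover (harmless here, where all fibers have the homotopy type of CW complexes, but not literally contained in the lemma's hypotheses), and that you prove the a priori stronger statement that $f\circ\lambda$ is nullhomotopic inside the fiber rather than merely in $E$; the paper's relative argument needs neither.
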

\begin{proof}
Let $C\left(\CP^N\right)$ denote the cone on $\CP^N$.
The nullhomotopy in the hypothesis gives the following diagram.
\[
\begin{tikzpicture}[scale=2]
\node (A) at (0,1) {$\CP^N$};\node (B) at (2,1) {$E$};
\node (C) at (0,0) {$C\left(\CP^N\right)$};\node (D) at (2,0) {$B$};
\path[->] (A) edge node[above]{$f$} (B) (A) edge (C) (B) edge (D) (C) edge node[above]{$g$} (D) (C) edge [dashed] (B);
\end{tikzpicture}
\]
We would like to find a map $C\left(\CP^N\right)\rightarrow E$ making the diagram commute.

Let $X_n$ denote the relative $n$-skeleton of the pair $\left(C\left(\CP^N\right),\CP^N\right)$.
Note that $X_2$ consists of only $\CP^N$, the cone point, and an edge connecting the cone point to a $\CP^N$.
By the assumption that $F$ is connected, the path in $B$ determined by the edge lifts to a path in $E$.
Hence there is a map $g_2:X_2\rightarrow E$ lifting the map $C\left(\CP^N\right)\rightarrow B$.
Let $\Sigma$ denote the suspension.
Theorem \ref{thm: obstruction theory nonsimply connected fiber} states that there is an obstruction
\[
\mc{O}\left(g_2\right)\in H^3\left(C\left(\CP^N\right),\CP^N;\pi_2 F\right)\cong H^3\left(\Sigma\left(\CP^N\right);\pi_2 F\right)\cong H^2\left(\CP^N;\pi_2 F\right)
\]
which vanishes if and only if $g_2$ can be redefined over the relative $1$-skeleton and extended to the relative $3$-skeleton.
As in the proof of Lemma \ref{lem: obstruction theory CP^N}, we can consider a scaling map $\lambda_2:\CP^N\rightarrow\CP^N$ such that the induced map on $H^2\left(\CP^N;\pi_2 F\right)$ eliminates the obstruction.
Coning $\lambda_2$ gives a map of relative CW-complexes $C\left(\lambda_2\right):\left(C\left(\CP^N\right),\CP^N\right)\rightarrow\left(C\left(\CP^N\right),\CP^N\right)$.
Since $\lambda_2\left(g_2\right)$ vanishes, we may redefine $g_2\circ C\left(\lambda_2\right):X_2\rightarrow E$ over the relative $1$-skeleton so that there is a map $g_3:X_3\rightarrow E$ lifting $g\circ C\left(\lambda_2\right):C\left(\CP^N\right)\rightarrow B$.
Continuing this way shows that, after a suitable self-map $\lambda$ of $\CP^N$, there is a lift in the diagram
\[
\begin{tikzpicture}[scale=2]
\node (A) at (0,1) {$\CP^N$}; \node (B) at (2,1) {$E$};
\node (C) at (0,0) {$C\left(\CP^N\right)$};\node (D) at (2,0) {$B$};
\path[->] (A) edge node[above]{$f\circ\lambda$} (B) (A) edge (C) (B) edge (D) (C) edge node[above]{$g\circ C\left(\lambda\right)$} (D) (C) edge (B);
\end{tikzpicture}.
\]
which proves the Lemma.
\end{proof}

\subsection{Characteristic Classes}
Recall the isomorphism $H^*(BU(N);\Z)\cong\Z[c_1,\cdots,c_N]$ where each $c_m$ has degree $2m$.
Since $c_m\in H^{2m}(BU(N);\Z)$, Brown representability identifies $c_m$ with a map $c_m:BU(N)\rightarrow K(\Z,2m)$ so the elements $c_1,\cdots,c_N$ together determine a map
\[
c_*:BU(N)\rightarrow\prod_{m=1}^{N}K(\Z,2m).
\]
By abuse of notation, we will use $c_m$ to denote both the element in $H^{2m}(BU(N);\Z)$ and the map above.
Let $x_m\in H^{2m}(K(\Z,2m);\Z)$ be a generator of the cohomology group.
Then, $c_m^*x_N=c_m$.

Rationally, there are isomorphisms
\[
H^*(K(\Z,2m);\Q)\cong\Q\left[x_m\right]
\]
and
\[
H^*\left(\prod_{m=1}^N K(\Z,2m);\Q\right)\cong \bigotimes_{m=1}^N H^*(K(\Z,2m);\Q)\cong\Q\left[x_1,\cdots x_N\right].
\]
The map $\left(c_*\right)^*$ sends $x_m$ to $c_m$ and so induces an isomorphism on rational cohomology groups.
Therefore, it induces an isomorphism on rational homotopy groups.
In particular, we have

\begin{lemma}\label{lem: fiber of c_*}
The fiber of $c_*$ is connected with torsion homotopy groups.
\end{lemma}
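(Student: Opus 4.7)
The plan is to run the long exact sequence of homotopy groups associated to $c_*$ (replaced up to homotopy by a fibration) and feed in the rational cohomology isomorphism the authors just observed.

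First I would handle connectedness. Both $BU(N)$ and $\prod_{m=1}^{N}K(\Z,2m)$ are simply connected: $BU(N)$ is simply connected because $U(N)$ is path-connected, and each Eilenberg--MacLane factor $K(\Z,2m)$ is simply connected for $m\ge 1$. The tail of the long exact sequence
\[
\pi_1(BU(N))\to \pi_1\bigl(\textstyle\prod_{m=1}^{N}K(\Z,2m)\bigr)\to\pi_0(F)\to\pi_0(BU(N))
\]
then sandwiches $\pi_0(F)$ between zero groups, so $F$ is connected.

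For the torsion statement, I would upgrade the rational cohomology isomorphism noted just before the lemma to a rational homotopy isomorphism. Both $BU(N)$ and $\prod_{m=1}^{N}K(\Z,2m)$ are simply connected spaces of finite type, so Serre's mod-$\mathcal{C}$ theory (or equivalently the rational Whitehead theorem) promotes the rational cohomology isomorphism induced by $c_*$ to an isomorphism on $\pi_n\otimes\Q$ for every $n\ge 1$. Plugging this into the rationalized long exact sequence
\[
\pi_n(F)\otimes\Q\to \pi_n(BU(N))\otimes\Q\xrightarrow{\;\cong\;}\pi_n\bigl(\textstyle\prod_{m=1}^{N}K(\Z,2m)\bigr)\otimes\Q\to \pi_{n-1}(F)\otimes\Q
\]
(exactness is preserved by tensoring with $\Q$) forces $\pi_n(F)\otimes\Q=0$ for all $n\ge 1$. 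Finally, the homotopy groups of $BU(N)$ and of the product of Eilenberg--MacLane spaces are finitely generated in every degree, so the long exact sequence shows $\pi_n(F)$ is finitely generated; a finitely generated abelian group that vanishes after tensoring with $\Q$ is torsion, completing the proof.

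I expect no serious obstacle here; the only point requiring a moment of care is justifying that the rational cohomology isomorphism upgrades to a rational homotopy isomorphism, which is standard once simple-connectivity and finite type are in hand.
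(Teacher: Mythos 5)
Your proposal is correct and follows essentially the same route as the paper, which likewise observes that $(c_*)^*$ is an isomorphism on rational cohomology, upgrades this to an isomorphism on rational homotopy groups, and concludes that the fiber is connected with torsion homotopy groups; the paper simply states this without spelling out the long exact sequence and finite-generation details that you supply.
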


One can perform a similar analysis with $BSO(2N)$.
Rationally, the cohomology ring is $H^*(BSO(2N);\Q)\cong\Q\left[p_1,\cdots,p_{N-1},e\right]$ where $p_m\in H^{4m}(BSO(2N);\Q)$ are the Pontryagin classes and $e\in H^{2N}(BSO(2N);\Q)$ is the Euler class.
These classes exist integrally and so determine maps $p_*:BSO(2N)\rightarrow\prod_{m=1}^{N-1}K(\Z,4m)$ and $e:BSO(2N)\rightarrow K(\Z,2N)$.
As in the case of $BU(N)$, the map
\[
e\times p_*:BSO(2N)\rightarrow K(\Z,2N)\times\prod_{m=1}^{N-1}K(\Z,4m)
\]
induces an isomorphism of rational cohomology rings and, therefore, an isomorphism of rational homotopy groups.
This shows

\begin{lemma}\label{lem: fiber of p_*}
The fiber of $e\times p_*$ is connected with torsion homotopy groups.
\end{lemma}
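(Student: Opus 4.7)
The plan is to extract both properties from the long exact sequence of homotopy groups for the fibration
\[
F \longrightarrow BSO(2N) \xrightarrow{\;e\times p_*\;} K(\Z,2N)\times\prod_{m=1}^{N-1}K(\Z,4m),
\]
exploiting the fact, already noted in the paragraph above the lemma, that $e\times p_*$ is a rational homotopy equivalence.

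First I would check connectedness. Since $SO(2N)$ is connected, $BSO(2N)$ is path-connected, and since each factor $K(\Z,2m)$ has $2m\ge 2$, the base of the fibration is simply connected. The tail of the long exact sequence then reads
\[
0 = \pi_1\bigl(K(\Z,2N)\times\textstyle\prod K(\Z,4m)\bigr) \longrightarrow \pi_0(F) \longrightarrow \pi_0(BSO(2N)) = 0,
\]
forcing $\pi_0(F)=0$, so $F$ is connected.

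For the torsion claim I would tensor the long exact sequence of homotopy groups with $\Q$; exactness is preserved since $\Q$ is flat over $\Z$. By hypothesis $e\times p_*$ induces an isomorphism $\pi_n(BSO(2N))\otimes\Q \xrightarrow{\cong} \pi_n(\text{base})\otimes\Q$ for every $n\ge 1$, and exactness then forces $\pi_n(F)\otimes\Q = 0$. Hence every element of $\pi_n(F)$ has finite order.

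There is essentially no obstacle at this stage of the paper: the substantive content, namely the identification of the rational cohomology of $BSO(2N)$ with $\Q[p_1,\dots,p_{N-1},e]$ and the consequent promotion of the cohomology isomorphism to a rational homotopy equivalence (which uses that both spaces are simply connected and of finite rational type), is supplied by the discussion preceding the lemma. The argument runs exactly parallel to that of Lemma \ref{lem: fiber of c_*}.
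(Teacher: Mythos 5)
Your argument is correct and is exactly the one the paper intends: the rational cohomology isomorphism promotes to a rational homotopy isomorphism (both spaces being simply connected of finite rational type), and the long exact sequence of the fibration, tensored with $\Q$, gives connectedness and the vanishing of $\pi_n(F)\otimes\Q$. The paper leaves these last steps implicit, so your write-up just fills in the same routine details.
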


Suppose $E$ is a complex vector bundle over $M$.
By abuse of notation, identify $E$ with a map $E:M\rightarrow BU(N)$.
If $\beta\in H^{2m}(M;\Z)$ is an element, then to say that $c_m(E)=\beta$ is to say that the diagram
\[
\begin{tikzpicture}[scale=2]
\node (A) at (2,1) {$BU(N)$};
\node (B) at (0,0) {$M$};\node (C) at (2,0) {$K(2m,\Z)$};
\path[->] (A) edge node[left]{$c_m$} (C) (B) edge node[above left]{$E$} (A) (B) edge node[above]{$\beta$} (C);
\end{tikzpicture}
\]
commutes.
There is a similar interpretation of the Euler and Pontryagin classes.

\subsection{Construction of Exponential Vector Bundles}
We first study the special case of exponential vector bundles on $\CP^N$.

\begin{prop}\label{prop: one exp bundle on CP^N}
Let $N'\ge N$ be an integer and let $\alpha\in H^2\left(\CP^N;\Z\right)$ be a generator.
There is an integer $t>0$ and a rank $N'$ exponential vector bundle $E$ on $\CP^N$ such that $c_1(E)=t\alpha$.
\end{prop}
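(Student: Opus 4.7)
The plan is to realize the desired bundle as a lift of an explicit map into a product of Eilenberg--MacLane spaces representing the Chern classes, using Lemmas \ref{lem: fiber of c_*} and \ref{lem: obstruction theory CP^N} to turn the rational construction into an integral one after scaling.

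First, I choose a positive integer $t_0$ divisible by $N!$, so that $\frac{t_0^m}{m!}$ is an integer for every $1 \le m \le N$. Since $\alpha^m = 0$ for $m > N$, the classes $\frac{t_0^m}{m!}\alpha^m \in H^{2m}(\CP^N;\Z)$ (interpreted as zero when $m > N$) together determine a map
\[
f : \CP^N \longrightarrow \prod_{m=1}^{N'} K(\Z, 2m),
\]
whose $m$-th component represents $\frac{t_0^m}{m!}\alpha^m$.

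Next, I construct a lift of $f$ over the $2$-skeleton $\CP^1 \subset \CP^N$. Because $H^{2m}(\CP^1;\Z) = 0$ for $m \ge 2$, the restriction $f|_{\CP^1}$ is determined up to homotopy by its first component $t_0\alpha|_{\CP^1}$. Letting $L \to \CP^1$ be the line bundle with $c_1(L) = t_0\alpha|_{\CP^1}$ and setting $E_0 := L \oplus \underline{\C}^{N'-1}$, the classifying map $\CP^1 \to BU(N')$ of $E_0$ composed with $c_*$ recovers $f|_{\CP^1}$, since all Chern classes of $E_0$ in degrees $\ge 4$ automatically vanish on $\CP^1$.

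By Lemma \ref{lem: fiber of c_*}, the fiber of $c_* : BU(N') \to \prod_{m=1}^{N'} K(\Z, 2m)$ is connected with torsion higher homotopy groups. Applying Lemma \ref{lem: obstruction theory CP^N} to $f$ with the $2$-skeletal lift provided by $E_0$, I obtain a scaling map $\lambda:\CP^N \to \CP^N$, with $\lambda^*\alpha = s\alpha$ for some nonzero integer $s$, such that $f \circ \lambda$ lifts to a map $E : \CP^N \to BU(N')$. The corresponding rank-$N'$ complex vector bundle (still denoted $E$) then satisfies
\[
c_m(E) = \lambda^*\!\left(\frac{t_0^m}{m!}\alpha^m\right) = \frac{(t_0 s)^m}{m!}\alpha^m,
\]
so $E$ is exponential with $c_1(E) = (t_0 s)\alpha$. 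If $t_0 s < 0$, replacing $E$ by its complex conjugate bundle flips the sign of every odd Chern class while preserving the exponential relation; setting $t := |t_0 s|$ then yields the bundle asserted in the proposition.

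The only genuine subtlety is arranging integrality of each class $\frac{t^m}{m!}\alpha^m$, which the divisibility of $t_0$ handles at the outset; everything else is a direct invocation of the obstruction-theoretic lemmas already established in this section.
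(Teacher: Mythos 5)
Your proof is correct and follows essentially the same route as the paper: represent the candidate Chern classes $\frac{t_0^m}{m!}\alpha^m$ as a map into $\prod_m K(\Z,2m)$ and use Lemma \ref{lem: obstruction theory CP^N} together with Lemma \ref{lem: fiber of c_*} to lift it to $BU(N')$ after a scaling map. You are in fact slightly more careful than the paper in two places the paper leaves implicit: verifying the hypothesis that the restriction to the $2$-skeleton lifts (via $L\oplus\underline{\C}^{N'-1}$) and arranging $t>0$ via conjugation.
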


\begin{proof}
First, define $\beta:=N!\alpha$, so that the classes $\frac{\beta^m}{m!}$ exist integrally.
These classes define a map
\[
\beta_*:\CP^N\rightarrow\prod_{m=1}^{N'} K(\Z,2m).
\]
We would like to find a lift in diagram
\[
\begin{tikzpicture}[scale=2]
\node (A) at (2,1) {$BU\left(N'\right)$};
\node (B) at (0,0) {$\CP^N$};\node (C) at (2,0) {$\prod_{m=1}^{N'}K(\Z,2m)$};
\path[->] (A) edge node[right]{$c_*$} (C) (B) edge node[above]{$\beta_*$} (C) (B) edge [dashed] (A);
\end{tikzpicture}
\]
where $c_*$ denotes the map determined by the Chern classes.
Such a lift need not exist but, by Lemma \ref{lem: obstruction theory CP^N}, a lift of $\beta_*\circ\lambda$ exists where $\lambda$ is a scaling map of $\CP^N$.
Let $E$ denote the vector bundle defined by this lift.
Then, $c_1\left(E\right)=t\beta$ for some nonzero integer $t$ and $c_m(E)=\frac{1}{m!}c_1(E)^m$ for $m\le N'$.
When $m>N'$ then, by our assumption that $N'\ge N$, $c_m(E)=0=\frac{1}{m!}c_1(E)^m$.
\end{proof}

Proposition \ref{prop: exp vector bundle properties} states that pullbacks of exponential vector bundles are exponential.
We obtain the following from taking further pullbacks along self-maps of $\CP^N$.

\begin{prop}\label{prop: infinitely many exp on bundles on CP^N}
Let $N'\ge N$ be an integer and let $t\alpha\in H^2\left(\CP^N;\Z\right)$ denote the class in Proposition \ref{prop: one exp bundle on CP^N}.
Then, any integer multiple of $t\alpha$ can be realized as $c_1(E)$ where $E$ is a rank $N'$ exponential vector bundle.
\end{prop}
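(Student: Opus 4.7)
The plan is to build the desired bundles by pulling back the fixed exponential vector bundle $E_0$ (of rank $N'$ with $c_1(E_0) = t\alpha$) produced in Proposition \ref{prop: one exp bundle on CP^N} along scaling self-maps of $\CP^N$, together with a complex conjugation trick to reach negative multiples.

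First I would record the easy preliminary cases. For the multiple $0 \cdot t\alpha = 0$, take the trivial rank $N'$ bundle, whose total Chern class is $1 = e^0$. For a positive integer $k$, the discussion preceding Lemma \ref{lem: obstruction theory CP^N} provides a scaling map $\lambda_k : \CP^N \to \CP^N$ inducing multiplication by $k$ on $H^2(\CP^N;\Z)$, and hence multiplication by $k^m$ on $H^{2m}(\CP^N;\Z)$. Define $E_k := \lambda_k^* E_0$. This bundle has rank $N'$, and by Proposition \ref{prop: exp vector bundle properties}(2) it is exponential. Naturality of Chern classes gives
\[
c_1(E_k) = \lambda_k^* c_1(E_0) = \lambda_k^*(t\alpha) = kt\alpha,
\]
which realizes the positive multiples.

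For negative multiples, I would use that complex conjugation $\bar{E}$ of an exponential bundle $E$ is again exponential with $c_1(\bar{E}) = -c_1(E)$: indeed $c_m(\bar{E}) = (-1)^m c_m(E) = (-1)^m \frac{1}{m!}c_1(E)^m = \frac{1}{m!}(-c_1(E))^m = \frac{1}{m!} c_1(\bar{E})^m$. Applying this to $E_k$ for $k>0$ gives a rank $N'$ exponential bundle with first Chern class $-kt\alpha$, covering the remaining negative multiples of $t\alpha$.

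There is essentially no obstacle here beyond unpacking the definitions, since the work was already done in Proposition \ref{prop: one exp bundle on CP^N}; this proposition is really an observation that the construction there is closed under pullback along scaling maps and conjugation.
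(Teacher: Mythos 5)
Your proposal is correct and is essentially the paper's argument: the paper gives no written proof beyond the remark that the bundles are obtained ``from taking further pullbacks along self-maps of $\CP^N$,'' which is exactly your pullback-along-scaling-maps construction combined with Proposition \ref{prop: exp vector bundle properties}(2). The only cosmetic difference is that your conjugation trick for negative multiples is unnecessary, since the paper defines scaling maps for any nonzero integer $t$, negative included.
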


The pullback property also allows us to construct exponential vector bundles over more general spaces.

\begin{prop}\label{prop: bundles on M}
Let $M$ be homotopy equivalent to a finite complex and let $N'\ge N$.
Then for every $\beta\in H^2\left(M;\Z\right)$ satisfying $\beta^{N+1}=0$, there is an integer $t>0$ such that, for all integers $u$, there is a rank $N'$ exponential vector bundle $E$ on $M$ with $c_1(E)=ut\beta$.
Moreover, the classifying maps $M\rightarrow BU\left(N'\right)$ associated to these bundles factor through $\CP^N$.
\end{prop}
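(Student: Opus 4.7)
The plan is to reduce the problem to Proposition \ref{prop: one exp bundle on CP^N} via pullback. Specifically, I will construct a map $\tilde\beta \colon M \to \CP^N$ with $\tilde\beta^{*}\alpha = s\beta$ for some positive integer $s$, where $\alpha$ generates $H^2(\CP^N;\Z)$. Given such a $\tilde\beta$, fix a rank $N'$ exponential bundle $E_0$ on $\CP^N$ with $c_1(E_0) = t_0\alpha$ supplied by Proposition \ref{prop: one exp bundle on CP^N}. For each $u \in \Z$, the class $u\alpha$ classifies a map $\CP^N \to K(\Z,2)$ which (by cellular approximation, since $\CP^N$ is the $2N$-skeleton of $\CP^\infty$) factors through a self-map $\mu_u \colon \CP^N \to \CP^N$ with $\mu_u^{*}\alpha = u\alpha$. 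Set $E_u := (\mu_u \circ \tilde\beta)^{*}E_0$. Then $E_u$ is an exponential rank $N'$ bundle on $M$ by Proposition \ref{prop: exp vector bundle properties}, its classifying map $M \to BU(N')$ factors through $\CP^N$ by construction, and a direct computation yields $c_1(E_u) = u t_0 s\,\beta$. Setting $t := t_0 s$ gives the stated result.

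The main step is therefore to produce $\tilde\beta$. Letting $f_\beta \colon M \to K(\Z,2) = \CP^\infty$ classify $\beta$, I need to lift $s \cdot f_\beta$ through the inclusion $i \colon \CP^N \hookrightarrow K(\Z,2)$ for some $s > 0$. The homotopy fiber $F$ of $i$ is computed from the Hopf fibration $S^1 \to S^{2N+1} \to \CP^N$: one finds $\pi_n F = 0$ for $n \le 2N$, $\pi_{2N+1}F \cong \Z$, and $\pi_n F \cong \pi_n S^{2N+1}$ is finite for $n > 2N+1$. Obstruction theory locates the obstructions to lifting in $H^{n+1}(M;\pi_n F)$. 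The first, in $H^{2N+2}(M;\Z)$, is precisely $\beta^{N+1}$ and vanishes by hypothesis; all higher obstructions take values in finite groups. Since $M$ is homotopy equivalent to a finite CW complex, only finitely many can be nonzero.

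The main obstacle will be killing these finitely many torsion obstructions by a suitable scaling of $\beta$. Replacing $\beta$ by $s\beta$ replaces $f_\beta$ with $\mu_s \circ f_\beta$, where $\mu_s \colon K(\Z,2) \to K(\Z,2)$ is multiplication by $s$; since $\mu_s^{*}$ scales classes in $H^{2k}$ by $s^k$, each obstruction (a cohomological expression in $\beta$ and in the chosen prior partial lifts) transforms by a positive power of $s$. Proceeding inductively up the relative Postnikov tower of $i$, at each stage I can further multiply $s$ by a high enough power of the order of the current torsion obstruction to kill it before moving on. Since $M$ is finite dimensional, the induction terminates in finitely many steps, producing the required $\tilde\beta$ and $s$. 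This is the analogue for $M$ of the self-map scaling argument used in Lemma \ref{lem: obstruction theory CP^N} and inside the proof of Proposition \ref{prop: one exp bundle on CP^N}.
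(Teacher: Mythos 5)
Your overall architecture agrees with the paper's: everything reduces to producing a map $\tilde\beta\colon M\to\CP^N$ with $\tilde\beta^*\alpha=s\beta$ for some $s>0$, after which pulling back the exponential bundle of Proposition \ref{prop: one exp bundle on CP^N} along $\mu_u\circ\tilde\beta$ and invoking Proposition \ref{prop: exp vector bundle properties} finishes the argument exactly as you say. Where you diverge is in constructing $\tilde\beta$: the paper builds a map of Sullivan models $(\Lambda_{\CP^N},d)\to(\Lambda_M,d)$ from the relation $d\gamma=\beta^{N+1}$, obtains $M\to\CP^N_{(0)}$, and descends to $\CP^N$ using the finite mapping telescope model of $\CP^N_{(0)}$ (which is where the scaling factor $s$ appears); you instead run integral obstruction theory against the fibration $S^{2N+1}\to\CP^N\to\CP^\infty$. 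Your identification of the fiber, of the primary obstruction as $\beta^{N+1}$, and of the higher obstruction groups $H^{n+1}\left(M;\pi_nS^{2N+1}\right)$ as finite is all correct, and your route has the minor advantage of not needing the paper's preliminary reduction to simply connected $M$ (the coefficients are untwisted since $\pi_1\CP^\infty=0$).

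The gap is in the final step: the claim that each higher obstruction ``transforms by a positive power of $s$'' when $\beta$ is replaced by $s\beta$, justified by the fact that $\mu_s^*$ multiplies $H^{2k}$ by $s^k$. The higher obstructions are not polynomial expressions in $\beta$; they are higher-order operations depending on the chosen partial lift, and they live in groups $H^{n+1}\left(M;\pi_nS^{2N+1}\right)$ with $n>2N+1$. To compare the obstruction for $f_\beta$ with one for $f_{s\beta}$ you must first transport a partial lift of $f_\beta$ to a partial lift of $f_{s\beta}$ — this requires the scaling map $\lambda_s$ of $\CP^N$ covering $\mu_s$ (or a compatible self-map of the Moore--Postnikov tower) — and then the new obstruction is the image of the old one under the coefficient homomorphism induced by the restriction of $\lambda_s$ to the homotopy fiber, which is a self-map of $S^{2N+1}$ of degree $s^{N+1}$. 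A degree-$d$ self-map of a sphere does \emph{not} act by multiplication by $d$ on higher homotopy groups: by Hilton's formula $(\deg d)_*\xi=d\xi+\sum_iP_i(d)\,w_i\circ H_i(\xi)$ with Whitehead-product correction terms, so the asserted transformation law fails as stated. The argument is repairable — on an odd sphere the correction terms are $2$-torsion, and since only finitely many finite obstruction groups occur one can choose $s$ highly divisible so that both $d\xi$ and the corrections vanish — but this needs to be said; as written the key step is unsupported. The paper's telescope argument avoids this issue entirely, because the finiteness of $M$ forces the rational lift to factor through a finite stage $T_q\simeq\CP^N$ of the telescope, with the torsion discrepancies absorbed into the identification $T_q\simeq\CP^N$.
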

\begin{proof}
By Proposition \ref{prop: infinitely many exp on bundles on CP^N} and the pullback property, it suffices to show that there is a map $f:M\rightarrow\CP^N$ such that $f^*\alpha$ is some nonzero integer multiple of $\beta$.

We first reduce to the case that $M$ is simply connected.
Attach $2$-cells to $M$ in order to obtain a simply connected finite complex $M'$ such that $M/M'\simeq\bigvee S^2$.
Note that $H^2\left(M';\Z\right)$ surjects onto $H^2\left(M;\Z\right)$ and $H^j\left(M';\Z\right)\cong H^j\left(M;\Z\right)$ for all $j>2$.
So, there is an element $\beta'\in H^2\left(M';\Z\right)$ mapping to $\beta$ and such that $\left(\beta'\right)^{N+1}=0$.
If the result holds for $M'$, then pulling the exponential vector bundle back along the inclusion $M\subseteq M'$ shows the result also holds for $M$.

Assuming $M$ is simply connected, there is an element in the Sullivan algebra $\left(\Lambda_{M},d_{M}\right)$ of $M$ representing $\beta$.
We will also use $\beta$ to denote this element.
The nilpotence hypothesis on $\beta$ implies there is a degree $2N+1$ element $\gamma\in\Lambda_{M}$ such that $d_{M}\gamma=\left(\beta\right)^{N+1}$.
These elements determine a map of differential graded algebras $\left(\Lambda_{\CP^N},d_{\CP^N}\right)\rightarrow\left(\Lambda_{M},d_{M}\right)$ which yields a map of rationalizations
\[
M_{(0)}\rightarrow\CP^N_{(0)}.
\]
We may assume $M$ is a finite complex so that the map $M\rightarrow M_{(0)}\rightarrow\CP^N_{(0)}$ has image in a finite subcomplex of $\CP^N_{(0)}$.
The rationalization $\CP^N_{(0)}$ can be constructed as a homotopy colimit of the diagram
\[
\CP^N\xrightarrow{\lambda^2}\CP^N\xrightarrow{\lambda^3}\CP^N\xrightarrow{\lambda^4}\CP^N\rightarrow\cdots
\]
where $\lambda^t$ is the scaling map corresponding to the integer $t$.
In particular, it is an infinite mapping telescope so the map $M\rightarrow\CP^N_{(0)}$ factors through some finite mapping telescope
\[
T_q=\varinjlim\left(\CP^N\xrightarrow{\lambda^2}\CP^N\xrightarrow{\lambda^3}\cdots\xrightarrow{\lambda^q}\CP^N\right).
\]
There is a homotopy equivalence $T_q\rightarrow\CP^N$ so we have constructed a map $M\rightarrow\CP^N$.

We now check that the pullback of $\alpha$ under this map is of the form $t\beta$.
Since $M$ is simply connected, $H^2\left(M;\Q\right)$ can be identified with $\Hom\left(\pi_2(M);\Q\right)$.
Let $\beta^*$ denote the element in $\pi_2(M)$ dual to $\beta$ under this identification.
Similarly, let $\alpha^*\in\pi_2\left(\CP^N\right)$ denote the dual to $\alpha$ and let $\alpha^*_0$ denote the image of $\alpha^*$ in $\pi_2\left(\CP^N_{(0)}\right)$.
Under the identification $\CP^N\simeq T_q$, the map $\CP^N\rightarrow \CP^N_{(0)}$ sends $\alpha^*$ to $\frac{1}{q!}\alpha^*_0$.
But the map $M\rightarrow\CP^N_{(0)}$ sends $\beta^*$ to $\alpha^*_0$.
It follows that the map $M\rightarrow\CP^N$ sends $\beta^*$ to $\frac{1}{q!}\alpha^*$.
Hence, $\alpha$ pulls back to $\frac{1}{q!}\beta$.
\end{proof}

\VBExistenceThm*

\begin{proof}[Proof of Theorem \ref{thm: VBExistence}]
Under the hypotheses of the first part, the Chern classes of each eigenbundle vanish.
Since the Chern classes determine a $BU(N)\rightarrow\prod_{m=1}^N K(\Z,2m)$ whose fiber has finite homotopy groups and $M$ is homotopy equivalent to a finite complex, there are only finitely many complex vector bundles of a fixed rank with prescribed Chern classes.

For the second part, suppose $2$ has odd order in $\left(\Z/p\Z\right)^{\times}$.
If $\beta\in H^2(M;\Z)$ is sufficiently nilpotent with respect to $V$, we may apply Proposition \ref{prop: bundles on M} to take exponential vector bundles $E_k$ such that $c_1\left(E_k\right)=u_kt\beta$ where the $u_k$ realize the linear relation $\sum_{k=1}^{\frac{p-1}{2}}u_k\tau(1)\left(\zeta^k\right)=0$.
This proves the second part of Theorem \ref{thm: VBExistence}.
\end{proof}

In order to address Question \ref{question: realizing exotic actions}, we would like a converse to Theorem \ref{thm: VBExistence}.
The difficulty in obtaining a converse is number theoretic; we know that any subset of $\left\{\tau(1)\left(\zeta^k\right)\right\}$ of size $u+1$ has a $\Q$-linear relation but it is not clear whether there exist smaller subsets which are $\Q$-linearly dependent.
However, we can say the following.

\begin{prop}\label{prop: manifolds with infinitely many bundles}
Suppose $M$ is homotopy equivalent to a finite complex and that there are infinitely many $G$-vector bundles with vanishing Atiyah-Singer class.
Then, infinitely many of these $G$-vector bundles are pulled back from $G$-vector bundles over $\CP^N$.
\end{prop}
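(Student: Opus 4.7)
The plan is to extract a nontrivial integer relation among $\{\Phi_{1,k}\}$ and a nonzero $\beta \in H^2(M;\Z)$ from a single bundle with nonzero first Chern classes, and then apply Proposition \ref{prop: bundles on M} to produce infinitely many pullbacks from a single $\CP^N$.

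By Theorem \ref{thm: Chern class restriction}, each $G$-vector bundle with vanishing Atiyah-Singer class decomposes as $\bigoplus_k E_k$ with each $E_k$ exponential of rank $d_k$, so the higher Chern classes are determined by the tuple $(c_1(E_k))_k$. The finite-ambiguity argument from the first part of Theorem \ref{thm: VBExistence} --- using Lemma \ref{lem: fiber of c_*} and the finiteness of $M$ --- then shows the hypothesis forces infinitely many rationally distinct tuples $(\beta_k^{(i)})_k$ in $\prod_k H^2(M;\Z)$ satisfying $\sum_k\beta_k^{(i)}\Phi_{1,k}=0$ and $(\beta_k^{(i)})^{d_k+1}=0$; in particular some tuple $(\beta_k^*)_k$ has a rationally nonzero entry.

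Next I would work out the combinatorics. Choosing a $\Q$-basis $e_1,\dots,e_r$ of $H^2(M;\Q)$ and expanding $\beta_k^* = \sum_j c_{k,j}e_j$, the constraint $\sum_k\beta_k^*\Phi_{1,k}=0$ splits into $\sum_k c_{k,j}\Phi_{1,k}=0$ for each $j$. Because some $\beta_k^*$ is rationally nonzero, some index $j$ produces a nontrivial $\Q$-relation on $\{\Phi_{1,k}\}$; clearing denominators yields an integer relation $\sum_k u_k\Phi_{1,k}=0$ with nonempty support $S'':=\{k:u_k\neq 0\}$. I pick $k_1\in S''$ minimizing $d_{k_1}$ over $S''$ and set $\beta := \beta_{k_1}^*$. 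Since $u_{k_1}\neq 0$ forces $c_{k_1,j}\neq 0$, the class $\beta$ is nonzero, and $\beta^{d_{k_1}+1}=0$ gives $\beta^{d_\ell+1}=0$ for every $\ell\in S''$.

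Finally I apply Proposition \ref{prop: bundles on M} with $\beta$ and $N:=d_{k_1}$, obtaining a map $f:M\to\CP^N$ with $f^*\alpha=t\beta$ together with, for each $\ell\in S''$, a rank-$d_\ell$ exponential bundle on $\CP^N$ realizing any prescribed integer multiple of a fixed class $s_\ell\alpha$ as first Chern class. For each $q\in\Z$ I assemble $\tilde E^{[q]}$ on $\CP^N$ by combining exponential bundles with $c_1$ proportional to $qu_\ell\alpha$ for $\ell\in S''$ (rescaled by a common multiple absorbing the $s_\ell$'s) with trivial rank-$d_k$ bundles for $k\notin S''$. The pullbacks $E^{[q]}=f^*\tilde E^{[q]}$ have vanishing Atiyah-Singer class (the constraint on $M$ reduces to $\sum u_\ell\Phi_{1,\ell}=0$), are manifestly pullbacks from $\CP^N$, and are pairwise distinct by inspecting first Chern classes.

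The main obstacle is matching the nilpotence of $\beta$ to the ranks of the eigenbundles indexed by the support of the integer relation, since Proposition \ref{prop: bundles on M} demands $\beta^{N+1}=0$ with $N\leq d_\ell$ for every $\ell$ in the support. This is what forces $k_1$ to be chosen inside $S''$ --- the support of the relation, possibly smaller than the full support of the tuple $(\beta_k^*)$ --- with $d_{k_1}$ minimum over $S''$.
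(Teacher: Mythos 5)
Your proposal is correct and follows essentially the same route as the paper: extract from one bundle with rationally nonzero first Chern classes a nontrivial $\Q$-relation $\sum_k u_k\Phi_{1,k}=0$ supported on eigenindices whose ranks dominate the nilpotence degree of a chosen nonzero $\beta$, then feed $\beta$ into Proposition \ref{prop: bundles on M} and vary by scaling. The only cosmetic difference is that the paper obtains the scalar relation by projecting $\sum_k\tau(1)(\zeta^k)c_1(E_k)=0$ onto the $\Q(\zeta)$-line spanned by a $\beta$ of minimal nilpotence degree, whereas you expand in a $\Q$-basis and take a coordinate and minimize the rank over the support; both achieve the same compatibility between $\beta^{N+1}=0$ and the ranks $d_\ell$.
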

\begin{proof}
We may assume that there is a vector bundle $E$ with vanishing Atiyah-Singer class and such that some of the $c_1\left(E_k\right)$ are nonzero rationally.

Let $\beta\in H^2(M;\Q)$ be one of the nonzero $c_1\left(E_k\right)$ where the $N$ such that $c_1\left(E_k\right)^{N+1}=0$ is minimal.
By projecting the relation $\sum_{k=1}^{\frac{p-1}{2}}\tau(1)\left(\zeta^k\right)c_1\left(E_k\right)=0$ to the $\Q(\zeta)$-subspace of $H^2\left(M;\Q(\zeta)\right)$ spanned by $\beta$, we see that there is a linear relation $\sum_{k=1}^{\frac{p-1}{2}}\tau(1)\left(\zeta^k\right)\beta_k$ where $\beta_k$ is a rational multiple of $\beta$ and $\beta_k=0$ if $c_1\left(E_k\right)=0$.
Moreover, our choice of $\beta$ ensures that the dimension of the eigenspace $V_k$ is at least $N$ when $\beta_k\neq0$.
By scaling the $\beta_k$ simultaneously, we may use Proposition \ref{prop: bundles on M} to realize $\beta_k$ as the first Chern class of an exponential bundle factoring through $\CP^N$.
Adding these together gives a bundle over $M$ with vanishing Atiyah-Singer class which factors through $\CP^N$.
Composing with self-maps of $\CP^N$ gives infinitely many such vector bundles.
\end{proof}

\section{Block Bundles}\label{section: block bundles}
We recall some definitions and facts about block bundles.
We refer to \cite{CassonThesis} and \cite{RourkeSandersonDelta2} for a more detailed treatment.

\begin{definition}\label{def: block bundle}
Let $K$ be a finite simplicial complex and let $Y$ be a polyhedron.
Let $\pi:E\rightarrow\abs{K}$ be a continuous map.
A \emph{block chart} for a simplex $\sigma\subseteq K$ is a $PL$-homeomorphism
\[
h_\sigma:\pi^{-1}(\sigma)\rightarrow\sigma\times Y
\]
such that, for each face $\tau\le\sigma$, the restriction $h_\sigma|_{\pi^{-1}(\tau)}$ is a $PL$-homeomorphism $\pi^{-1}(\tau)\rightarrow\tau\times Y$.
We say that $\pi:E\rightarrow\abs{K}$ is a \emph{block bundle with fiber $Y$} if there is a block chart for every simplex $\sigma\subseteq K$.
\end{definition}

It is not true that, for a $PL$-block bundle, $\pi^{-1}(x)\cong F$ for an arbitrary point $x\in\abs{K}$; this distinguishes block bundles from fiber bundles.
One can define block bundles with other structure groups.
We will only be concerned with $PL$-block bundles.
Our block bundles will typically be over smooth manifolds in which case we give the manifold a $PL$-structure compatible with the smoothing.

\begin{definition}\label{def: iso and equiv of block bundles}
Let $\pi_i:E_i\rightarrow\abs{K}$ be $PL$-block bundles for $i=0,1$.
An \emph{isomorphism} of $PL$-block bundles is a $PL$-homeomorphism $H:E_0\rightarrow E_1$ such that $H\left(\pi_0^{-1}(\sigma)\right)=\pi_1^{-1}(\sigma)$ for all simplices $\sigma\subseteq K$.

The block bundles $\pi_0$ and $\pi_1$ are \emph{equivalent} if there is a subdivision $K'$ of $K$ such that $\pi_0$ and $\pi_1$ determine isomorphic block bundles over $K'$.
\end{definition}

Casson shows in \cite{CassonThesis} that equivalence of $PL$-block bundles is an equivalence relation.

\begin{definition}\label{def: SPL(Y)}
Let $Y$ be a polyhedron.
Define $\widetilde{PL}(Y)$ to be the simplicial group whose $d$-simplices are the $PL$-homeomorphisms $f:\Delta^d\times Y\rightarrow\Delta^d\times Y$ such that, for each face $\sigma\subseteq\Delta^d$,
\[
f\left(\pi_{\Delta^d}^{-1}(\sigma)\right)\subseteq\pi_{\Delta^d}^{-1}(\sigma)
\]
where $\pi_{\Delta^d}:\Delta^d\times Y\rightarrow\Delta^d$ is the projection.
If $Y$ is an orientable $PL$-manifold, define $\widetilde{SPL}(Y)$ to be the simplicial group whose $d$-simplices are the orientation preserving $PL$-homeomorphisms $f:\Delta^d\times Y\rightarrow\Delta^d\times Y$ satisfying the above property.
\end{definition}

One can construct classifying spaces $B\widetilde{PL}(Y)$ for $PL$-block bundles with fiber $Y$.
The following is \cite[Theorem 2]{CassonThesis}.

\begin{theorem}\label{thm: classifying equiv of block bundles}
There is a bijection between equivalence classes of $PL$-block bundles over $K$ with fiber $Y$ and homotopy classes of maps $[K,B\widetilde{PL}(Y)]$.
\end{theorem}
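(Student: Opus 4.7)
The plan is to follow the standard classifying space strategy adapted to block bundles, essentially reconstructing Casson's argument. The key point is that $\widetilde{PL}(Y)$ is a simplicial group, so one has access to a bar-construction-style classifying space $B\widetilde{PL}(Y)$ carrying a universal block bundle $\widetilde{EPL}(Y) \to B\widetilde{PL}(Y)$. Concretely, I would realize $B\widetilde{PL}(Y)$ as the $\Delta$-set whose $n$-simplices are block bundles over $\Delta^n$ with fiber $Y$ (plus compatible block charts), with face maps induced by restriction to faces. The universal block bundle is then the tautological one: over the $n$-simplex indexed by $\pi: E \to \Delta^n$, the fiber is $E$ itself.

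For the forward map, any simplicial map $f : K \to B\widetilde{PL}(Y)$ specifies a block chart on each simplex of $K$ together with compatible restrictions, which by gluing determines a block bundle $f^* \widetilde{EPL}(Y) \to |K|$. To see this is well-defined on homotopy classes, I would pull back along the cylinder: a homotopy $K \times I \to B\widetilde{PL}(Y)$ gives a block bundle over $|K| \times I$ restricting to the two candidates on the ends, and a standard concordance-implies-equivalence argument (interpolating trivializations along the $I$-direction using the contractibility of block chart systems) shows the two pullbacks are equivalent after subdivision.

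For the reverse map, given $\pi: E \to |K|$ with block charts $h_\sigma$, the overlap data $h_\sigma \circ h_\tau^{-1}$ on $\tau \le \sigma$ yields, for each simplex $\sigma$, an $n$-simplex of $B\widetilde{PL}(Y)$; the simplicial relations hold because block charts are compatible with face inclusions by definition. This assembles into a simplicial map $K \to B\widetilde{PL}(Y)$. The homotopy class is independent of the choice of block charts because any two collections of block charts on $\pi$ differ by an element of $\widetilde{PL}(Y)$ on each simplex, and choosing a path of interpolations (possible because the space of block charts over a fixed simplex is a torsor over $\widetilde{PL}(Y)$) produces an explicit homotopy.

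The main obstacle is reconciling the forward and reverse constructions with the subdivision present in \refdef{def: iso and equiv of block bundles}: two bundles may only become isomorphic after passing to a common subdivision $K'$, yet their classifying maps should already be homotopic on $K$. This requires a subdivision-invariance statement for $B\widetilde{PL}(Y)$, which one proves by exhibiting, for any block bundle $\pi$ over a subdivided simplex, a canonical equivalence between the classifying map for $\pi$ and a map factoring through the original simplex. Once this is in place, the composite "bundle $\to$ map $\to$ bundle" is equivalent to the identity (by reconstructing $E$ via gluing the universal chart data), and "map $\to$ bundle $\to$ map" is homotopic to the identity (since the tautological block charts on the universal bundle recover the original simplex data), establishing the bijection.
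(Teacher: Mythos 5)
The paper does not prove this statement: it is quoted verbatim as \cite[Theorem 2]{CassonThesis}, so there is no in-paper argument to compare against. Your outline is, in substance, the standard Casson/Rourke--Sanderson proof, and the overall architecture (a $\Delta$-set of block bundles over simplices realizing $B\widetilde{PL}(Y)$, a tautological universal bundle, gluing along a simplicial map for one direction and reading off block charts for the other) is the right one. That said, essentially all of the mathematical content of the theorem lives in the lemmas you name but defer: (i) \emph{concordance implies equivalence} --- that a block bundle over $\abs{K}\times I$ is equivalent to a product --- which is the block-bundle substitute for the covering homotopy theorem and requires a genuine inductive isotopy-extension argument, not just ``contractibility of block chart systems''; (ii) the \emph{amalgamation/subdivision-invariance} lemma, which you correctly flag as the main obstacle but do not prove; (iii) the fact that every block bundle over a single simplex is trivial, which is what lets your $\Delta$-set of block bundles over $\Delta^n$ agree with the bar construction on the simplicial group $\widetilde{PL}(Y)$ as defined in Definition \ref{def: SPL(Y)}; and (iv) the Kan condition for these $\Delta$-sets, without which you cannot replace continuous maps $\abs{K}\rightarrow B\widetilde{PL}(Y)$ and continuous homotopies by simplicial ones, so the identification with $[K,B\widetilde{PL}(Y)]$ does not yet follow. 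None of these steps is false, but as written the proposal is a roadmap through Casson's proof rather than a proof: each of (i)--(iv) is where the actual work is done in \cite{CassonThesis} and \cite{RourkeSandersonDelta2}, and you should either prove them or cite them explicitly.
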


Finally, we record a consequence of the fact that block bundles are controlled over the base space.
Let $SO^G(V)$ denote the group of $G$-equivariant orientation preserving linear transformations of $V$.
Suppose $E_0$ and $E_1$ are two $G$-vector bundles such that the composites $M\rightarrow BSO^G(V)\rightarrow B\widetilde{SPL}(SV/G)$ are homotopic.
Let $D_0$ and $D_1$ be the respective unit disk bundles and let $SE_0$ and $SE_1$ denote the respective unit sphere bundles.
The homotopy $M\times I\rightarrow B\widetilde{SPL}(SV/G)$ gives a concordance $W$ of $PL$-block bundles $SE_1/G$ and $SE_2/G$.
Let $\widetilde{W}$ denote the $G$-cover.
We may form the $G$-manifold $E':=\widetilde{W}\cup_{SE_1}D_1$ which has boundary $SE_0$.

\begin{prop}\label{prop: block bundle control}
In the situation above, suppose $f:\partial E_1/G\rightarrow SE_0/G$ is an equivalence of $PL$-block bundles over $M$.
Let $\tilde{f}$ denote the map on covers.
Then there is an equivariant homeomorphism $E'\rightarrow D_0$ restricting to $\tilde{f}$ on the boundary. 
\end{prop}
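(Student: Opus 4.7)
The plan is to trivialize the concordance $W$ as a product block bundle, thereby identifying $E'$ equivariantly with $D_1$, and then to use $f$ to extend this to an equivariant homeomorphism to $D_0$ via a fiberwise coning construction.

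For the first step, I would appeal to a standard concordance-implies-equivalence theorem for block bundles (in the spirit of \cite{CassonThesis, RourkeSandersonDelta2}), which asserts that the block bundle $W$ over $M \times I$ is equivalent, after subdivision, to the product block bundle $SE_1/G \times I$, with the equivalence chosen to be the identity on the $SE_1/G$ end. This yields a PL homeomorphism $\Psi : W \to SE_1/G \times I$ whose restriction to the $SE_0/G$ end is a block bundle equivalence $\phi : SE_0/G \to SE_1/G$ over $M$. Lifting to the $G$-cover produces an equivariant PL homeomorphism $\widetilde\Psi : \widetilde W \to SE_1 \times I$ restricting to $\widetilde\phi$ on the $SE_0$ end and to the identity on the $SE_1$ end. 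Gluing in $D_1$ then gives an equivariant PL homeomorphism
\[
E' = \widetilde W \cup_{SE_1} D_1 \;\cong\; (SE_1 \times I) \cup_{SE_1 \times \{1\}} D_1 \;\cong\; D_1,
\]
where the last identification absorbs the collar into $D_1$ via a standard boundary collar argument. The boundary restriction of this homeomorphism $E' \to D_1$ is $\widetilde\phi : SE_0 \to SE_1$.

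For the second step, note that $D_i/G$ is the fiberwise cone of $SE_i/G$ over $M$, since the fiber $DV/G$ is the cone on $SV/G$ with cone point the origin (which is fixed by $G$, while $G$ acts freely on $SV$). Consequently, any block bundle equivalence $h : SE_1/G \to SE_0/G$ over $M$ extends by fiberwise radial scaling $(x,t) \mapsto (h(x), t)$ to a block bundle equivalence $\bar h : D_1/G \to D_0/G$, which lifts to an equivariant PL homeomorphism $\widetilde{\bar h} : D_1 \to D_0$. Apply this construction to $h := f \circ \phi^{-1} : SE_1/G \to SE_0/G$, whose lift is $\widetilde f \circ \widetilde\phi^{-1}$, and compose with the identification $E' \xrightarrow{\sim} D_1$ from the first step. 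The result is an equivariant homeomorphism $E' \to D_0$ whose boundary restriction is $(\widetilde f \circ \widetilde\phi^{-1}) \circ \widetilde\phi = \widetilde f$, as required.

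The main obstacle is invoking the concordance-implies-equivalence theorem in exactly the form needed; this is the technical heart of the argument but is standard in the block bundle literature. A more delicate point is verifying that the coning extension lifts equivariantly: the $G$-action on $D_i$ has fixed set exactly the zero section $M$, so the quotient map $D_i \to D_i/G$ is a branched cover rather than a covering space, and one must check that the fiberwise cone of an equivariant PL equivalence of free $G$-sphere bundles yields a well-defined equivariant PL equivalence of the disk bundles extending it over the branch locus $M$.
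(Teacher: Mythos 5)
Your step 1 is fine and mirrors the first move of the paper's own proof (concordance of block bundles implies isomorphism after subdivision), but step 2 contains a genuine gap, and it is not the one you flagged at the end. A block bundle equivalence $h:SE_1/G\rightarrow SE_0/G$ ``over $M$'' does \emph{not} commute with the projections to $M$; it only carries the block $\pi_1^{-1}(\sigma)$ onto $\pi_0^{-1}(\sigma)$ for each simplex $\sigma$. Hence the fiberwise-cone formula $(x,t)\mapsto(h(x),t)$ is not even well defined at the cone locus: writing $D_i/G$ as the mapping cylinder of $\pi_i:SE_i/G\rightarrow M$, two points $x,x'$ with $\pi_1(x)=\pi_1(x')$ are identified at $t=0$, while $\pi_0(h(x))$ and $\pi_0(h(x'))$ need only lie in a common carrier simplex, not coincide. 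Already for $M$ a single $1$-simplex and a trivial bundle, a block self-equivalence $h(s,y)=(\alpha(s,y),\beta(s,y))$ with $\alpha$ genuinely depending on $y$ has no continuous cone extension. This failure is precisely the difference between block bundles and fiber bundles, and overcoming it is the entire content of the proposition; your step 2 in effect assumes what is to be proved. The branched-cover/equivariance point you raise is comparatively harmless.

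The paper gets around this with a control argument rather than a one-step coning: it identifies $D_0\setminus M$ with $SE_0\times[0,\infty)$, fills the slab over $[j,j+1]$ with a block bundle isomorphism $F_j$ defined over a triangulation $M_{j+1}$ of $M$ subordinate to the barycentric subdivision of $M_j$ (matching $F_{j-1}$ where the slabs meet), and lets $j\rightarrow\infty$. Each $F_j$ displaces the $M$-coordinate of a point by at most the mesh of $M_j$, which tends to $0$ under iterated subdivision, so the homeomorphism of the complements of $M$ extends continuously over $M$. If you wish to keep your two-step structure, step 2 must be replaced by an argument of this shrinking-control type (applied to $h=f\circ\phi^{-1}$ stacked along a collar), since no sphere-to-disk extension of a single block equivalence is available by radial coning.
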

\begin{proof}
First, note that $D_0\setminus M$ is equivariantly homeomorphic to $SE_0\times[0,\infty)$.
We construct an equivariant homeomorphism $W\cup_{SE_1}D_1\setminus M\rightarrow SE_0\times[0,\infty)$ such that the restriction to the boundary is $\tilde{f}$ and we show that this homeomorphism extends to $M$.

By hypothesis, $W$, $SE_0/G$ and $SE_1/G$ have the same classifying map.
So after taking a subdivision of $M\times I$, there is an isomorphism of $PL$-block bundles $F:W/G\rightarrow SE_0/G\times[0,1]$ which restricts to $f$ on $SE_0/G\times\{0\}$.
Let $M_0$ denote the triangulation of $M\times\{0\}$ and let $M_1$ denote the triangulation of $M\times\{1\}$.
Let $f_1$ denote the isomorphism $F|_{SE_1/G}:SE_1/G\rightarrow SE_0/G$.

Write $W_j$ for the trivial $PL$-block bundle over $M\times[j,j+1]$ where we equip $M$ with a triangulation subordinate to the barycentric subdivision of $M_j$.
Subdivide $W_j$ so that there is an isomorphism of $PL$-block bundles $F_j:W_j\rightarrow M\times[j,j+1]\times SE_0/G$ restricting to $f_j$ on the part over $M\times\{j\}$.
Define $M_{j+1}$ to be the triangulation on $M\times\{j+1\}$ and define $f_{j+1}$ to be the restriction of $F_j$ to the part over $M\times\{j+1\}$.

Continuing this way, we obtain a homeomorphism
\[
W/G\cup_{SE_1/G}\left(SE_1/G\times[1,\infty)\right)\rightarrow SE_0/G\times[0,\infty).
\]
Lifting to the $G$-cover gives an equivariant homeomorphism
\[
W\cup_{SE_1}\left(SE_1\times[1,\infty)\right)\rightarrow SE_0\times[0,\infty).
\]
This extends continuously to $M$; any sequence in $W\cup_{SE_1}\left(SE_1\times[1,\infty)\right)$ approaching a point $m\in M$ will get sent to a sequence on the right hand side approaching the same point.
\end{proof}

\subsection{The Rational Homotopy Type of $B\widetilde{SPL}(SV/G)$}

In \cite{CappellWeinbergerSimpleAS}, Cappell-Weinberger describe $B\widetilde{SPL}(SV/G)$ rationally.
Let $\tilde{L}^s_k(G)$ denote the reduced simple $L$-space of $G$; this is a space satisfying $\pi_n\tilde{L}^s_k(G)=\tilde{L}^s_{n+k}(G)$ where the right hand side denotes the reduced simple $L$-groups.

\begin{theorem}[Cappell-Weinberger]\label{thm: rational homotopy of SPL(Lens)}
There is a map
\[
B\widetilde{SPL}(SV/G)\rightarrow B\widetilde{SPL}(SV)\times\tilde{L}^s_{\dim_{\R} V}(G)_{(0)}.
\]
whose fiber is connected with torsion homotopy groups.
\end{theorem}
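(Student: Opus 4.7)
The plan is to construct the two components of the map separately and argue that together they induce a rational equivalence, so that the homotopy fiber has torsion homotopy groups (connectedness of the fiber then follows from standard long-exact-sequence considerations once we check the map is surjective on $\pi_0$).

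For the first component $B\widetilde{SPL}(SV/G)\to B\widetilde{SPL}(SV)$, I would use the universal cover. A $d$-simplex of $\widetilde{SPL}(SV/G)$ is a face-preserving $PL$-self-homeomorphism $f$ of $\Delta^d\times SV/G$; since $SV\to SV/G$ is the universal cover (assuming $\dim_{\R}V\ge 2$), $f$ lifts to a $G$-equivariant face-preserving $PL$-self-homeomorphism of $\Delta^d\times SV$, and forgetting the $G$-action gives a $d$-simplex of $\widetilde{SPL}(SV)$. This assignment is functorial and produces a simplicial group homomorphism, hence a map on classifying spaces; at the level of block bundles it is the $G$-cover construction.

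For the second component $B\widetilde{SPL}(SV/G)\to\tilde{L}^s_{\dim_{\R}V}(G)_{(0)}$, which is the heart of the theorem, I would follow \cite{CappellWeinbergerSimpleAS}. Write $L=SV/G$ for the lens space. There is a rational fibration relating the block self-homeomorphism classifying space $B\widetilde{SPL}(L)$ to the simple block structure set $\widetilde{\mc{S}}^s(L)$, which in turn admits a surgery obstruction map into simple $L$-theory $L^s_*(\Z G)$. The Atiyah-Singer $\rho$-invariant realizes this obstruction rationally, and after reducing modulo the contribution of the trivial subgroup, produces a well-defined map into $\tilde{L}^s_{\dim_{\R}V}(G)_{(0)}$.

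To complete the proof, I would combine the surgery exact sequence for $L$ with the rational computation of $\pi_*B\widetilde{SPL}(SV)\otimes\Q$ (the rational block structure set of the sphere). Rationally the surgery sequence splits via $\rho$, yielding a decomposition of $\pi_*B\widetilde{SPL}(L)\otimes\Q$ into the sum of $\pi_*B\widetilde{SPL}(SV)\otimes\Q$ and $\pi_*\tilde{L}^s_{\dim_{\R}V}(G)_{(0)}$, under which the product of the two constructed maps is the canonical projection onto each factor. The main obstacle is producing the second map at the space level rather than merely on rational homotopy groups: naturality of $\rho$ in families and careful tracking of simple structures are required, and this is precisely the technical content of Cappell-Weinberger's construction, which I would invoke essentially as a black box.
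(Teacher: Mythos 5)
Your outline is consistent with what the paper does, with the caveat that the paper does not actually prove this statement: it is quoted from \cite{CappellWeinbergerSimpleAS}, and what the paper supplies is an explicit construction of the map together with a remark on exactly how much Cappell--Weinberger establish (a $\tfrac{1}{2\abs{G}}$-equivalence on higher homotopy, with $\pi_1 B\widetilde{SPL}(SV/G)$ only known to be finite solvable with $\abs{G}$-torsion subquotients). Your first component agrees verbatim with the paper's: the map $B\widetilde{SPL}(SV/G)\rightarrow B\widetilde{SPL}(SV)$ is the passage to the $G$-cover. For the second component you route through the block structure set, the surgery exact sequence, and the $\rho$-invariant, and you correctly flag that the real difficulty is promoting the rational $\rho$-splitting from homotopy groups to a map of spaces. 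The paper resolves precisely that difficulty by a different (bordism-theoretic) mechanism: it uses the rational splitting $\tilde{L}^s_{\dim_{\R} V}(G)_{(0)}\simeq BO\left(\widetilde{RO}(G)\right)_{(0)}\times\Omega^2BO\left(\widetilde{RO}(G)\right)_{(0)}$, the universal coefficient theorem for $KO$, and the Conner--Floyd isomorphism to convert a homomorphism $AS:\Omega^{SO}_{4*}\left(B\widetilde{SPL}(SV/G)\right)\otimes_{\Omega^{SO}_*(*)}\Z\left[\tfrac{1}{2}\right]\rightarrow\widetilde{RO}(G)_{(0)}$ (defined by the multisignature of a free $G$-manifold bounding copies of the covering block bundle, corrected by the nonequivariant signature) into an honest map of spaces. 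This concrete description is not incidental: it is what the paper later exploits in Proposition \ref{prop: BSPL(SV/G) and AS invariant} to show that two $G$-vector bundles with equal Atiyah--Singer classes have homotopic composites into $\tilde{L}^s_{\dim_{\R} V}(G)_{(0)}$, so a proof that stopped at the surgery-theoretic black box would leave that later step without traction. Both your proposal and the paper ultimately defer the verification that the combined map is a rational equivalence (hence has fiber with torsion homotopy groups) to Cappell--Weinberger, so modulo the choice of mechanism for the second component your sketch is an acceptable, genuinely alternative account of the same theorem.
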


\begin{remark}
Cappell-Weinberger state that the map in Theorem \ref{thm: rational homotopy of SPL(Lens)} is a $\frac{1}{2\abs{G}}$-equivalence.
They do not show that $B\widetilde{SPL}(SV/G)$ is simply connected.
Their proof shows that $\pi_1 B\widetilde{SPL}(SV/G)$ is a finite solvable group with a composition series having $\abs{G}$-torsion abelian subquotients.
They also show that the map on higher homotopy groups is an equivalence after inverting $2\abs{G}$.
\end{remark}

In Theorem \ref{thm: rational homotopy of SPL(Lens)}, the map $B\widetilde{SPL}(SV/G)\rightarrow B\widetilde{SPL}(SV)$ is given by pulling back a homeomorphism of $\Delta^d\times SV/G$ to $\Delta^d\times SV$.
In particular, if an equivariant vector bundle is non-equivariantly trivial, then the composite
\[
M\rightarrow BSO^G(V)\rightarrow B\widetilde{SPL}(SV/G)\rightarrow B\widetilde{SPL}(SV)
\]
is nullhomotopic.

The other component of the map involves the Atiyah-Singer class and an argument with the Conner-Floyd isomorphism (a more detailed treatment of an analogous argument may be found in \cite[Chapter 4]{MadsenMilgram}).
First recall the rational equivalence $\tilde{L}^s_{\dim_{\R} V}(G)_{(0)}\simeq BO\left(\widetilde{RO}(G)\right)_{(0)}\times\Omega^2BO\left(\widetilde{RO}(G)\right)_{(0)}$ where $BO\left(\widetilde{RO}(G)\right)$ denotes $\Omega^\infty$ of $KO$ smashed with Moore spectrum.
To define a map $B\widetilde{SPL}(SV/G)\rightarrow BO\left(\widetilde{RO}(G)\right)_{(0)}$ it suffices to define an element of $KO^0\left(B\widetilde{SPL}(SV/G);\widetilde{RO}(G)_{(0)}\right)$.

The universal coefficients theorem for $KO$ gives an isomorphism
\[
KO^0\left(X;\widetilde{RO}(G)_{(0)}\right)\rightarrow \Hom\left(KO_0(X),\widetilde{RO}(G)_{(0)}\right)
\]
for finite complexes $X$.
An inverse limit argument shows that, for infinite $X$, there is a surjection
\[
KO^0\left(X;\widetilde{RO}(G)_{(0)}\right)\rightarrow\varprojlim\Hom\left(KO_0\left(X^{(i)}\right),\widetilde{RO}(G)_{(0)}\right)
\]
where the limit on the right is taken over skeleta.

The Conner-Floyd isomorphism states that $\Omega^{SO}_{4*+i}\left(X\right)\otimes_{\Omega^{SO}_*(*)}\Z\left[\frac{1}{2}\right]\cong KO_i\left(X;\Z\left[\frac{1}{2}\right]\right)$.
So given an element of
\[
\Hom\left(\Omega^{SO}_{4*}\left(X\right)\otimes_{\Omega^{SO}_*(*)}\Z\left[\frac{1}{2}\right],\widetilde{RO}(G)_{(0)}\right)
\]
we obtain an element of $KO^0\left(X;\widetilde{RO}(G)_{(0)}\right)$ and hence a map
\[
X\rightarrow BO\left(\widetilde{RO}\left(G\right)\right)_{(0)}.
\]
This map is unique up to homotopy if $X$ is a finite complex.

We now define the homomorphism $AS\in\Hom\left(\Omega^{SO}_{4*}\left(X\right)\otimes_{\Omega^{SO}_*(*)}\Z\left[\frac{1}{2}\right],\widetilde{RO}(G)_{(0)}\right)$ giving rise to the map $B\widetilde{SPL}(SV/G)\rightarrow BO\left(\widetilde{RO}\left(G\right)\right)_{(0)}$.
Suppose $f:M\rightarrow B\widetilde{SPL}(SV/G)$ represents an element of $\Omega^{SO}\left(B\widetilde{SPL}(SV/G)\right)$.
Let $SE/G\rightarrow M$ be the corresponding block bundle.
Then $SE/G$ has a $G$-cover $SE$ which is a block bundle over $M$ with fiber $SV$.
Since $G$ acts freely on $SE$ and because $SE$ bounds non-equivariantly, there is an integer $r>0$ such that $r$-many copies of $SE$ bounds a manifold $X$ on which $G$ acts freely.
Define
\[
AS([f]):=\frac{1}{r}\op{sign}_G\left(X\right)-\op{sign}\left(E\right)\cdot\op{triv}
\]
where $\op{sign}_G$ denotes the $\widetilde{RO}(G)$-valued multisignature, $\op{sign}\left(E\right)$ denotes the (non-equivariant) signature of the block bundle obtained by coning the sphere bundle and $\op{triv}$ denotes the trivial representation.

So far, only ``half'' of the map $B\widetilde{SPL}(SV/G)\rightarrow L^s_{\dim_{\R}V}(G)_{(0)}$ has been defined; we still need to define a map
\[
B\widetilde{SPL}(SV/G)\rightarrow\Omega^2BO\left(\widetilde{RO}(G)\right)_{(0)}.
\]
This is equivalent to a map $\Sigma^2B\widetilde{SPL}(SV/G)\rightarrow BO\left(\widetilde{RO}(G)\right)_{(0)}$.
As above, we obtain such a map from a group homomorphism
\[
\Omega_{4*}\left(\Sigma^2B\widetilde{SPL}(SV/G)\right)\otimes_{\Omega^{SO}_*(*)}\Z\left[\frac{1}{2}\right]\cong\Omega_{4*+2}\left(B\widetilde{SPL}(SV/G)\right)\otimes_{\Omega^{SO}_*(*)}\Z\left[\frac{1}{2}\right]\xrightarrow{AS}\widetilde{RO}(G)_{(0)}.
\]
This homomorphism is defined using the Atiyah-Singer invariant in an identical manner.

Suppose the $f:M\rightarrow B\widetilde{SPL}(SV/G)$ factors through $BSO^G(V)$.
Then we may regard $SE$ above as the sphere bundle of the corresponding $G$-vector bundle and, by \cite[Section 7]{AtiyahSinger3}, $AS([f])$ is the element of $\widetilde{RO}(G)$ with character
\[
AS([f])(g)=\left\langle A(g,V) L(M)\mc{M}(g,E),[M]\right\rangle.
\]
The following lemma asserts that the character is determined by its value on a generator of $G$.

\begin{lemma}\label{lem: AS determined by generator}
Suppose $\xi=\bigoplus_{k=1}^{\frac{p-1}{2}}\xi_k$ and $E=\bigoplus_{k=1}^{\frac{p-1}{2}} E_k$ are two $G$-vector bundles with fiber $V$ and with eigenbundle decompositions associated to a generator $g_0$ of $G$.
If
\[
\prod_{k=1}^{\frac{p-1}{2}}\mc{M}^{\zeta^k}\left(\xi_k\right)=\prod_{k=1}^{\frac{p-1}{2}}\mc{M}^{\zeta^k}\left(E_k\right)
\]
then $AS(M,\xi)=AS(M,E)$.
\end{lemma}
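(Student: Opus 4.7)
The plan is to use the Galois equivariance recorded in Lemma \ref{lem: Galois invariance of as} to propagate the hypothesis, which concerns only the value at the generator $g_0$, to the $G$-signature characters at every non-identity element of $G$.

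First I would observe that $AS(M,E)$ and $AS(M,\xi)$ live in $\widetilde{RO}(G)_{(0)}$, so it suffices to check that their complex characters agree on every non-identity element: two classes in the reduced representation ring agree iff their characters differ by a constant, which on $G\setminus\{e\}$ means they agree. Since $p$ is prime, every non-identity element of $G$ has the form $g_0^n$ for a unique $n\in\{1,\ldots,p-1\}$, and the fixed set of every such element is all of $M$, so the Atiyah-Singer formula applies with fixed set $M$ throughout. The eigenbundle of $E$ on which $g_0^n$ acts by $\zeta^{nk}$ is the same underlying complex bundle $E_k$, so
\[
A(g_0^n,V)\,\mc{M}(g_0^n,E)=\prod_{k=1}^{(p-1)/2}\left(\frac{\zeta^{nk}+1}{\zeta^{nk}-1}\right)^{\rank_\C E_k}\mc{M}^{\zeta^{nk}}(E_k),
\]
and likewise for $\xi$.

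Next I would invoke Lemma \ref{lem: Galois invariance of as}: letting $\sigma_n\in\Gal(\Q(\zeta)/\Q)$ be the automorphism $\zeta\mapsto\zeta^n$ acting on scalar coefficients of classes in $H^*(M;\Q(\zeta))$, the lemma gives $\mc{M}^{\zeta^{nk}}(E_k)=\sigma_n\bigl(\mc{M}^{\zeta^k}(E_k)\bigr)$, and the same reasoning shows $A(g_0^n,V)=\sigma_n(A(g_0,V))$. Since $L(M)\in H^*(M;\Q)$ is Galois-fixed, this yields
\[
AS(M,E)(g_0^n)=\sigma_n\bigl(AS(M,E)(g_0)\bigr),
\]
and identically for $\xi$. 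The hypothesis $\prod_k\mc{M}^{\zeta^k}(\xi_k)=\prod_k\mc{M}^{\zeta^k}(E_k)$ is precisely $\mc{M}(g_0,\xi)=\mc{M}(g_0,E)$, and the fact that $\xi$ and $E$ share the fiber $V$ forces $\rank_\C\xi_k=\rank_\C E_k$, so $A(g_0,V)$ is common to both. Thus $AS(M,\xi)(g_0)=AS(M,E)(g_0)$, and applying $\sigma_n$ transfers this equality to every $g_0^n$.

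The only real obstacle is the bookkeeping: one has to verify that passing from the $g_0$-eigenbundle decomposition to the $g_0^n$-eigenbundle decomposition re-indexes the coefficients $\tau(j_1,\ldots,j_r)(\zeta^k)$ exactly by the Galois action $\sigma_n$. Once that identification is made precise via Lemma \ref{lem: Galois invariance of as}, the argument is formal, and no computation beyond the matching of characters at generators is required.
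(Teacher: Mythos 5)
Your proposal is correct and follows essentially the same route as the paper: write the character at $g_0^n$ via the eigenvalue shift $\zeta^k\mapsto\zeta^{nk}$, identify this with the Galois automorphism $\sigma_n$ acting on coefficients via Lemma \ref{lem: Galois invariance of as}, and transport the hypothesis at $g_0$ to every power of $g_0$. The extra observations you make (that $A(g_0^n,V)$ is common to both bundles since they share the fiber $V$, and that $L(M)$ is Galois-fixed) are implicit in the paper's computation and do not change the argument.
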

\begin{proof}
For an arbitrary $g_0^n\in G$, we have
\[
AS(M,E)\left(g_0^n\right)=\left\langle A\left(g_0^n,V\right)\mc{L}(M)\prod_{k=1}^{\frac{p-1}{2}}\mc{M}^{\zeta^{nk}}\left(E_k\right),[M]\right\rangle.
\]
Let $\sigma\in Gal(\Q(\zeta)/\Q)$ be the automorphism defined by $\zeta\mapsto\zeta^n$.
By Lemma \ref{lem: Galois invariance of as},
\begin{align*}
AS(M,E)\left(g_0^n\right)&=\left\langle A\left(g_0^n,V\right)\mc{L}(M)\sigma\left(\prod_{k=1}^{\frac{p-1}{2}}\mc{M}^{\zeta^k}\left(E_k\right)\right),[M]\right\rangle\\
&=\left\langle A\left(g_0^n,V\right)\mc{L}(M)\sigma\left(\prod_{k=1}^{\frac{p-1}{2}}\mc{M}^{\zeta^k}\left(\xi_k\right)\right),[M]\right\rangle\\
&=AS(M,\xi)\left(g_0^n\right).
\end{align*}
\end{proof}

\begin{prop}\label{prop: BSPL(SV/G) and AS invariant}
Suppose $M$ is homotopy equivalent to a finite complex and $\nu_1,\nu_2:M\rightarrow BSO^G(V)$ determine $G$-vector bundles with equal Atiyah-Singer classes.
Then the compositions
\[
M\rightarrow BSO^G(V)\rightarrow B\widetilde{SPL}(SV/G)\rightarrow\tilde{L}^s_{\dim_{\R} V}(G)_{(0)}
\]
is are homotopic.
\end{prop}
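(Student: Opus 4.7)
The plan is to exploit the construction of the map $B\widetilde{SPL}(SV/G)\to\tilde{L}^s_{\dim_\R V}(G)_{(0)}$ through its two rational factors $BO\bigl(\widetilde{RO}(G)\bigr)_{(0)}$ and $\Omega^2 BO\bigl(\widetilde{RO}(G)\bigr)_{(0)}$. Since $M$ is homotopy equivalent to a finite complex, the universal coefficient theorem for $KO$ combined with the Conner--Floyd isomorphism implies that a map $M\to BO\bigl(\widetilde{RO}(G)\bigr)_{(0)}$ is determined up to homotopy by the induced homomorphism $\Omega^{SO}_{4*}(M)\otimes_{\Omega^{SO}_*(*)}\Z[\tfrac{1}{2}]\to\widetilde{RO}(G)_{(0)}$, and analogously for the $\Omega^2$-factor via $\Omega^{SO}_{4*+2}(M)\otimes_{\Omega^{SO}_*(*)}\Z[\tfrac{1}{2}]$. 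So the proof reduces to showing that the two bordism-level homomorphisms $AS_1,AS_2$ associated to $\nu_1,\nu_2$ coincide, in both parities.

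For a class $[h:N\to M]$, by \cite[Section 7]{AtiyahSinger3} the value $AS_i([h])\in\widetilde{RO}(G)_{(0)}$ is the virtual representation with character
\[
AS_i([h])(g)=\langle A(g,V)\, L(N)\, \mc{M}(g,h^*\nu_i),\,[N]\rangle.
\]
By Lemma \ref{lem: AS determined by generator}, this virtual representation is determined by its value at the fixed generator $g_0$, which by naturality of Chern classes equals
\[
\langle L(N)\cdot h^*\bigl(A(g_0,V)\,\mc{M}(g_0,\nu_i)\bigr),\,[N]\rangle.
\]
By hypothesis $A(g_0,V)\mc{M}(g_0,\nu_1)=A(g_0,V)\mc{M}(g_0,\nu_2)$ in $H^*(M;\C)$, so these pullbacks to $N$ agree for every $h$. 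Hence $AS_1([h])(g_0)=AS_2([h])(g_0)$, and another application of Lemma \ref{lem: AS determined by generator} upgrades this to equality $AS_1([h])=AS_2([h])$ as elements of $\widetilde{RO}(G)_{(0)}$.

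This establishes equality of the bordism-level homomorphisms in both the $4*$ and $4*+2$ cases, after which the uniqueness statement for finite $M$ produces the desired homotopy of compositions to each of the two rational factors of $\tilde{L}^s_{\dim_\R V}(G)_{(0)}$. The step requiring the most care is arranging the universal coefficient/inverse limit argument so that the bordism homomorphism genuinely pins down the map up to homotopy (this is where the finite-complex hypothesis on $M$ enters); once that is in place, the proof itself is a direct combination of Lemma \ref{lem: AS determined by generator} with the naturality of the classes $\mc{M}(g_0,-)$ under pullback.
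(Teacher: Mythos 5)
Your proposal is correct and follows essentially the same route as the paper: reduce to the two rational factors $BO\bigl(\widetilde{RO}(G)\bigr)_{(0)}$ and $\Omega^2 BO\bigl(\widetilde{RO}(G)\bigr)_{(0)}$, use the finite-complex hypothesis to pin down each map by its bordism-level homomorphism, and check equality of $AS_1$ and $AS_2$ on classes $h:N\to M$ via naturality of the characteristic classes. Your explicit invocation of Lemma \ref{lem: AS determined by generator} to pass from equality of characters at $g_0$ to equality of the virtual representations is exactly the step the paper leaves implicit.
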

\begin{proof}
Since $M$ is homotopy equivalent to a finite complex, if the two induced maps
\[
\Hom\left(\Omega_{4*}^{SO}\left(B\widetilde{SPL}(SV/G)\right)\otimes_{\Omega_*^{SO}(*)}\Z\left[\frac{1}{2}\right],\widetilde{RO}(G)\right)\rightarrow \Hom\left(\Omega_{4*}^{SO}(M)\otimes_{\Omega_*^{SO}(*)}\Z\left[\frac{1}{2}\right],\widetilde{RO}(G)\right)
\]
send the map $AS$ to the same element then they determine the same map $M\rightarrow BO\left(\widetilde{RO}(G)\right)_{(0)}$.
For $i=0,1$, let $\tilde{\nu}_i$ denote the composition
\[
M\xrightarrow{\nu_i}BSO^G(V)\rightarrow B\widetilde{SPL}(SV/G).
\]
Suppose $h:M'\rightarrow M$ represents an element of $\Omega_{4*}^{SO}(M)$.
Then $\nu_0\circ h$ and $\nu_1\circ h$ classify $G$-vector bundles over $M'$ with equal Atiyah-Singer classes.
So the Atiyah-Singer invariants of the two $SV/G$-block bundles over $M'$ are equal.
It follows that $\tilde{\nu}_i\circ h$ represent elements of $\Omega_{4*}^{SO}\left(B\widetilde{SPL}(SV/G)\right)$ such that $AS\left([\tilde{\nu}_0\circ h]\right)=AS\left([\tilde{\nu}_1\circ h]\right)$.

A similar argument shows that $\nu_1$ and $\nu_2$ determine the same map $M\rightarrow\Omega^2 BO\left(\widetilde{RO}(G)\right)_{(0)}$.
\end{proof}

Using Proposition \ref{prop: BSPL(SV/G) and AS invariant} we can show that vector bundles with vanishing Atiyah-Singer class can be taken to have lens space bundles which are trivial as $PL$-block bundles.

\begin{prop}\label{prop: exotic normal bundle}
Suppose $M$ is homotopy equivalent to a finite complex.
Suppose there are infinitely many $G$-vector bundles over $M$ with fiber $V$ and vanishing Atiyah-Singer class.
Then, infinitely many of these $G$-vector bundles $E$ satisfy the following properties.
\begin{enumerate}
\item The classifying map of $E$ factors through $\CP^N$,
\item The corresponding lens space bundle $SE/G$ is isomorphic as a $PL$-block bundle to the trivial bundle $M\times SV/G$.
\end{enumerate}
\end{prop}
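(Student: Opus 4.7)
The plan is to combine Proposition \ref{prop: manifolds with infinitely many bundles} with an obstruction-theoretic argument based on Theorem \ref{thm: rational homotopy of SPL(Lens)} and Lemma \ref{lem: obstruction theory CP^N nullhomotope}. Starting from the infinitude assumption, Proposition \ref{prop: manifolds with infinitely many bundles} supplies a free $G$-vector bundle $\xi$ over $\CP^N$ whose pullback along some $f_M \colon M \to \CP^N$ lies in the given family. Inspecting that construction, $\xi$ is itself a sum of exponential eigenbundles whose first Chern classes are integer multiples of a generator of $H^2(\CP^N;\Z)$ satisfying the defining $\tau(1)(\zeta^k)$-relation, so Theorem \ref{thm: Chern class restriction} yields that $\xi$ has vanishing Atiyah-Singer class on $\CP^N$, not merely on $M$. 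Since property (1) is preserved under pullback along scaling maps of $\CP^N$, and distinct scalings produce distinct bundles on $M$ (detected by scaling first Chern classes of eigenbundles), it suffices to produce one such $\xi$ whose associated classifying map $\phi \colon \CP^N \to B\widetilde{SPL}(SV/G)$ is nullhomotopic; further scalings will then give the infinite family.

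By Theorem \ref{thm: rational homotopy of SPL(Lens)} there is a map
\[
B\widetilde{SPL}(SV/G) \to B\widetilde{SPL}(SV) \times \tilde{L}^s_{\dim_{\R} V}(G)_{(0)}
\]
whose connected fiber has torsion homotopy groups. I will show each of the two composites of $\phi$ into the factors becomes nullhomotopic after a scaling of $\CP^N$, and then apply Lemma \ref{lem: obstruction theory CP^N nullhomotope} to this fibration once more to conclude. The $\tilde{L}^s_{\dim_{\R} V}(G)_{(0)}$-composite is handled by Proposition \ref{prop: BSPL(SV/G) and AS invariant} applied with $\CP^N$ in place of $M$: comparing $\xi$ to the trivial rank-$\dim_{\R} V$ equivariant bundle on $\CP^N$ (both with vanishing AS class) shows this composite is already nullhomotopic, with no scaling required.

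The $B\widetilde{SPL}(SV)$-composite factors as $\CP^N \to BSO(V) \to B\widetilde{SPL}(SV)$ by forgetting the $G$-action. The underlying real bundle of $\xi$ is a sum of exponential complex bundles, so Proposition \ref{prop: exp vector bundle properties}(3) gives vanishing Pontryagin classes and Proposition \ref{prop: euler class vanishes} gives vanishing Euler class. Hence the composite $\CP^N \to BSO(V) \to K(\Z,2N) \times \prod_{m=1}^{N-1} K(\Z,4m)$ detecting these characteristic classes is nullhomotopic, and Lemmas \ref{lem: fiber of p_*} and \ref{lem: obstruction theory CP^N nullhomotope} produce a scaling $\lambda_0$ such that $\CP^N \xrightarrow{\lambda_0} \CP^N \to BSO(V)$, and thus the $B\widetilde{SPL}(SV)$-composite of $\phi \circ \lambda_0$, is nullhomotopic. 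Both factor-composites of $\phi \circ \lambda_0$ are now nullhomotopic, and a final application of Lemma \ref{lem: obstruction theory CP^N nullhomotope} to the fibration above produces $\lambda_1$ with $\phi \circ \lambda_0 \circ \lambda_1$ nullhomotopic; by Theorem \ref{thm: classifying equiv of block bundles}, the lens space block bundle of $f_M^* (\lambda_0 \circ \lambda_1)^* \xi$ over $M$ is then trivial, and pulling back this bundle further along scaling maps of $\CP^N$ yields the desired infinite family.

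I expect the main obstacle to be the bookkeeping of the two-stage obstruction argument: the $B\widetilde{SPL}(SV)$-composite must be killed by scaling before Lemma \ref{lem: obstruction theory CP^N nullhomotope} can be applied to the fibration of Theorem \ref{thm: rational homotopy of SPL(Lens)} as a whole, and one must verify that the Atiyah-Singer vanishing from Proposition \ref{prop: manifolds with infinitely many bundles} holds already on $\CP^N$ in order to dispose of the $\tilde{L}^s_{\dim_{\R} V}(G)_{(0)}$-composite without any additional scaling.
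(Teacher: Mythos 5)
Your proposal is correct and follows essentially the same route as the paper's proof: reduce to $\CP^N$ via Proposition \ref{prop: manifolds with infinitely many bundles}, kill the $\tilde{L}^s_{\dim_{\R}V}(G)_{(0)}$-component using Proposition \ref{prop: BSPL(SV/G) and AS invariant}, kill the $B\widetilde{SPL}(SV)$-component via vanishing Pontryagin and Euler classes together with an obstruction-theoretic scaling, and finish with Lemma \ref{lem: obstruction theory CP^N nullhomotope} applied to the fibration of Theorem \ref{thm: rational homotopy of SPL(Lens)}. Your explicit verification that the Atiyah--Singer class already vanishes on $\CP^N$ (via Theorem \ref{thm: Chern class restriction}) is a point the paper leaves implicit, and is a worthwhile clarification.
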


\begin{proof}
As before, we begin with the case $M=\CP^N$.
By hypothesis and Proposition \ref{prop: BSPL(SV/G) and AS invariant}, the composite
\[
\CP^N\xrightarrow{E}BSO^G(V)\rightarrow B\widetilde{SPL}(SV/G)\rightarrow\tilde{L}^s_{\dim_{\R}V}(G)_{(0)}
\]
is nullhomotopic.

We show that the composite
\[
\CP^N\xrightarrow{E}BSO^G(V)\rightarrow B\widetilde{SPL}(SV/G)\rightarrow B\widetilde{SPL}(SV)
\]
also vanishes.
There is a commuting diagram
\[
\begin{tikzpicture}[scale=2]
\node (A) at (0,1) {$\CP^N$};\node (B) at (2,1) {$BSO^G(V)$};\node (C) at (4,1) {$B\widetilde{SPL}(SV/G)$};
\node (D) at (2,0) {$BSO(V)$};\node (E) at (4,0) {$B\widetilde{SPL}(SV)$};
\path[->] (A) edge node[above]{$E$} (B) (B) edge (C) (B) edge (D) (C) edge (E) (D) edge (E);
\end{tikzpicture}
\]
where the left vertical map is obtained by forgetting the action.
So it suffices to show that $E$ is trivial as a (non-equivariant) real vector bundle.
The Pontryagin classes of $E$ vanish by Proposition \ref{prop: exp vector bundle properties} and the Euler class vanishes by Proposition \ref{prop: euler class vanishes}.
We have shown that the composition
\[
\CP^N\xrightarrow{E} BSO^G(V)\rightarrow BSO(V)\xrightarrow{e\times p_*}K\left(\Z,\dim_{\R}V\right)\times\prod_{m=1}^{\dim_{\R}V/2-1}K(\Z,4m)
\]
is nullhomotopic.
Applying Lemma \ref{lem: obstruction theory CP^N} to the fibration $BSO(2d)\xrightarrow{e\times p_*}K\left(\Z,2d\right)\times\prod_{m=1}^{d-1}K(\Z,4m)$ shows that, after composing with a scaling map $\lambda$ of $\CP^N$, the composite $E\circ\lambda:\CP^N\rightarrow BSO^G(V)\rightarrow BSO(2d)$ is nullhomotopic.

Replace $E$ with the $\lambda^*E$ so that it is trivial as a (non-equivariant) real vector bundle.
So far, we have constructed an $E=\bigoplus_{k=1}^{\frac{p-1}{2}} E_k$ such that some $c_1\left(E_k\right)$ is a nonzero and such that the composite
\[
\CP^N\xrightarrow{E}BSO^G(V)\rightarrow B\widetilde{SPL}(SV/G)\rightarrow B\widetilde{SPL}(SV)\times\tilde{L}^s_{2d}(G)_{(0)}
\]
is nullhomotopic.
It remains to show that we can modify $E$ so that the map is nullhomotopic integrally.
We apply Lemma \ref{lem: obstruction theory CP^N nullhomotope} to the fibration $B\widetilde{SPL}(SV/G)\rightarrow B\widetilde{SPL}(SV)\times \tilde{L}^{s}_{\dim_{\R}V}(G)_{(0)}$ to see that, for a scaling map $\lambda:\CP^N\rightarrow\CP^N$,
\[
\CP^N\xrightarrow{\lambda}\CP^N\xrightarrow{E}BSO^G(V)\rightarrow B\widetilde{SPL}(SV/G)
\]
is nullhomotopic.

For the general case, apply Proposition \ref{prop: manifolds with infinitely many bundles} to the result on $\CP^N$.
\end{proof}

\section{Smoothing $PL$-Concordances}\label{section: smoothing}
In this section, we would like to take advantage of the fact that a closed manifold of dimension at least $5$ has only finitely many smooth structures in order to show that the vector bundles constructed above, whose lens space bundles have homeomorphic total spaces, can be made to have diffeomorphic total spaces.

Let us first recall some classical facts about smoothing.
We refer the reader to \cite{HirschMazur} for details.

\begin{definition}\label{def: smoothing}
Given a closed $PL$-manifold $M$, a \emph{smoothing} of $M$ (or a \emph{smooth structure} on $M$) is a smooth manifold $M_0$ and a $PL$-homeomorphism $f_0:M_0\rightarrow M$.
Two smooth structures $f_i:M_i\rightarrow M$, $i=0,1$, are \emph{concordant} if there is a smooth structure on the $PL$-manifold $M\times I$ and a $PL$-homeomorphism $F:M\times I\rightarrow M\times I$ restricting to $f_i$ on $M\times\{i\}$.
\end{definition}

Let $PL/O(M)$ denote the concordance classes of smoothings of $M$.
This set can be identified with concordance classes of linear structures on the stable tangent $PL$-microbundle.
Specifying an initial smooth structure $\eta$ on $M$ gives a bijection $PL/O(M)\cong[M,PL/O]$ and $PL/O$ has an $H$-space structure induced by Whitney sum.
Let $PL/O(M,\eta)$ denote the set of concordance classes of smoothings with a specified smooth structure $\eta$.
Thus $PL/O(M,\eta)$ is a homotopy functor from the category of smooth manifolds and continuous maps to abelian groups.

\subsection{Differentiable Vector Bundles}
Suppose $p:E\rightarrow M$ is a vector bundle where $M$ is smoothable and let $\mc{U}$ be an atlas on $M$ such that $p$ is locally trivial over each $U\in\mc{U}$.
Given a smooth structure $\eta$ on $M$, we may assume that the transition maps $U_\alpha\cap U_\beta\rightarrow GL_k(\R)$ are smooth for $U_\alpha$ and $U_\beta$ in some subatlas $\mc{U}'$.
The vector bundle equipped with a maximal subatlas satisfying this property is called a \emph{differentiable vector bundle}.
By \cite[p.89, Theorem 1.9]{HirschMazur}, every vector bundle over a smooth manifold admits the structure of a differentiable vector bundle and this structure is unique.

The total space of a differentiable vector bundle has a unique smooth structure such that the local trivializations $p^{-1}U\rightarrow U\times\R^n$ are smooth.
It turns out that this assignment yields a well-defined bijection $p^!:PL/O(M)\rightarrow PL/O(E)$ \cite[p. 93, Theorem 2.6]{HirschMazur}.
If we wish to consider these as pointed sets, then there is a well-defined bijection $p^!:PL/O(M,\eta)\rightarrow PL/O\left(E,p^!\eta\right)$.
It is important to note that there are generally multiple ways of making the total space $E$ a vector bundle over $M$ and different ways of doing so result in different bijections.

Using the structure group $SO^G(V)$, can define differentiable $G$-vector bundles over a smooth manifold (with trivial $G$-action) similarly and the proof of \cite[p.89, Theorem 1.9]{HirschMazur} shows that every $G$-vector bundle admits a unique differentiable $G$-vector bundle structure.
When $V$ is a free representation, this gives the corresponding lens space bundle a smooth structure.

\subsection{Functoriality}
Suppose $f:(M,\eta)\rightarrow (N,\omega)$ is a continuous map between smooth manifolds.
Hirsch-Mazur \cite[p. 111]{HirschMazur} give the following description of the induced map $PL/O(f):PL/O(N,\omega)\rightarrow PL/O(M,\eta)$.
Let $\phii:(N,\beta)\rightarrow(N,\omega)$ represent a smooth structure in $PL/O(N,\omega)$ which we will denote $[\beta]$.
Let $\rho$ denote the standard smooth structure on $\R$.
For some sufficiently large integer $d$, there is a smooth embedding
\[
\psi:(M,\eta)\rightarrow \left(N\times\R^d,\omega\times\rho^d\right)
\]
such that $\pi_N\circ\psi$ is homotopic to $f$.
Then the normal bundle $\nu$ of $\psi(M)\subseteq\left(N\times\R^d,\omega\times\rho^d\right)$ determines a vector bundle on the $PL$-submanifold $\psi(M)\subseteq \left(N\times\R^d,\beta\times\rho^d\right)$.
The total space $E(\nu)$ has a smooth structure $\nu^!\eta$ coming from the smooth structure $\eta$ on $M$ and the vector bundle structure.
This space can also be identified with an open subset of $\left(N\times\R^d,\beta\times\rho^d\right)$ hence it inherits a smooth structure $\left[\beta\times\rho^d\right]\in\mc{S}^{PL/O}\left(E(\nu),\nu^!\eta\right)$.
Since $\nu^!:PL/O(M,\eta)\rightarrow PL/O\left(E(\nu),\nu^!\eta\right)$ is a bijection, there is a unique smooth structure $[\alpha]\in PL/O(M,\eta)$ such that $\nu^!([\alpha])=\left[\beta\times\rho^d\right]$.
The smoothing $\alpha$ represents $PL/O(f)\left(\left[\beta\right]\right)$.

The next result essentially states that given a smooth map between manifolds, the smooth structure on a pullback $PL$-block bundle is the pullback smooth structure induced by maps of total spaces.

\begin{prop}\label{prop: smoothing pullback}
Suppose $F, M$ and $N$ are smooth manifolds.
Let $E_0$ and $E_1$ be $F$-bundles over $N$ such that $E_0$ and $E_1$ are smooth and the projections to $N$ are smooth.
Let $\phii:E_1\rightarrow E_0$ be an isomorphism of $PL$-block bundles and let $f:M\rightarrow N$ be smooth.
Let $\eta$ denote the given smooth structure on $E_0$ and let $f^*\eta$ denote the smooth structure on $f^*E_0$ making it a smooth submanifold of $M\times E_0$ and let $f^*:PL/O(E_0,\eta)\rightarrow PL/O(f^*E_0,f^*\eta)$ be the induced map on smooth structures.
Then, $f^*[\phii]$ is represented by the induced map on pullbacks $f^*E_1\rightarrow f^*E_0$.
\end{prop}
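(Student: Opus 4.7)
The plan is to apply the Hirsch--Mazur recipe reviewed immediately before the proposition, which characterizes $f^*[\phii]$ as the unique class $[\alpha]\in PL/O(f^*E_0,f^*\eta)$ satisfying $\nu^!([\alpha])=[\beta\times\rho^d]$, and to verify that the smoothing represented by the pullback PL-homeomorphism $f^*\phii:f^*E_1\to f^*E_0$ meets this defining criterion.

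First, since $M$ is smooth, choose a Whitney embedding $j:M\hookrightarrow\R^d$ for $d$ sufficiently large. Paired with the smooth projection $\tilde f:f^*E_0\to E_0$ (smooth by the hypothesis that $f^*\eta$ makes $f^*E_0$ a smooth submanifold of $M\times E_0$) this produces a smooth embedding
\[
\psi:f^*E_0\to E_0\times\R^d,\qquad (m,e)\mapsto(e,j(m)),
\]
with $\pi_{E_0}\circ\psi=\tilde f$. Let $\nu$ be the normal bundle of $\psi(f^*E_0)\subseteq E_0\times\R^d$. Writing $\beta$ for the smooth structure on $E_0$ transported from $E_1$ through $\phii$, the Hirsch--Mazur recipe declares $f^*[\phii]$ to be the unique $[\alpha]\in PL/O(f^*E_0,f^*\eta)$ with $\nu^!([\alpha])=[\beta\times\rho^d]$, the latter being the smooth structure on $E(\nu)$ inherited from the tubular neighborhood in $(E_0,\beta)\times\R^d$.

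Running the same recipe one level below on $E_1$ yields the smooth embedding $\psi_1:f^*E_1\to E_1\times\R^d$, $(m,e_1)\mapsto(e_1,j(m))$, with normal bundle $\nu_1$. The crucial point is that the PL-homeomorphism $\phii\times\op{id}_{\R^d}:E_1\times\R^d\to E_0\times\R^d$ is tautologically a diffeomorphism onto $(E_0\times\R^d,\beta\times\rho^d)$ and carries $\psi_1(f^*E_1)$ onto $\psi(f^*E_0)$ via the map $f^*\phii$. Consequently $(\phii\times\op{id})^{-1}$ identifies $\nu$ with $\nu_1$, and tubular neighborhoods correspond. By the uniqueness of the differentiable vector bundle structure \cite[p.~89, Theorem 1.9]{HirschMazur}, the smooth structure on $E(\nu_1)$ inherited from the ambient smooth $E_1\times\R^d$ agrees with $\nu_1^!(f^*\eta)$; transported back by $\phii\times\op{id}$ this realizes $\nu^!([f^*\phii])$ and equals $[\beta\times\rho^d]$. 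Uniqueness in the Hirsch--Mazur characterization then forces $[f^*\phii]=f^*[\phii]$.

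The main technical point I expect to be delicate is the assertion that $\phii\times\op{id}_{\R^d}$ really sends $\psi_1(f^*E_1)$ exactly onto $\psi(f^*E_0)$ in the manner claimed: because $\phii$ is only a block isomorphism and not literally a fibre bundle isomorphism, one cannot assume $p_0\circ\phii=p_1$ on the nose. After choosing a triangulation of $N$ subordinate to $f$ and refining if necessary, the block-by-block identification of $\phii$ gives the desired correspondence on the genuine fibre-bundle pullbacks $f^*E_i$; the resulting map on pullbacks is what is meant by $f^*\phii$ in the statement, and the rest of the argument goes through unchanged.
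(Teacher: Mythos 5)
You have correctly located the delicate point, but the proposed repair does not close it, and the step it is meant to support is the crux of the whole argument. The claim that $\phii\times\op{id}_{\R^d}$ carries $\psi_1\left(f^*E_1\right)$ onto $\psi\left(f^*E_0\right)$ is false for a block bundle isomorphism: since $\pi_0\circ\phii\neq\pi_1$ in general, the image of $\left\{\left(e_1,j(m)\right):\pi_1\left(e_1\right)=f(m)\right\}$ under $\phii\times\op{id}_{\R^d}$ is $\left\{\left(\phii\left(e_1\right),j(m)\right):\pi_1\left(e_1\right)=f(m)\right\}$, which need not coincide with (or even meet) the fiber product $f^*E_0$. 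Triangulating so that the blocks are adapted to $f$ does not help: even on a single block, the restriction $\phii:\pi_1^{-1}(\sigma)\rightarrow\pi_0^{-1}(\sigma)$ need not respect the projection to $\sigma$, so the pointwise formula $(m,e_1)\mapsto(m,\phii(e_1))$ still fails to land in $f^*E_0$, and there is no canonical ``induced map on pullbacks'' obtained block by block. Making sense of that induced map is in fact part of what the proposition has to establish, so it cannot simply be stipulated. There is also a smaller slip in the same passage: the structure on $E(\nu_1)$ should be compared with $\nu_1^!$ of the pullback structure on $f^*E_1$, not with $\nu_1^!\left(f^*\eta\right)$, which lives on $f^*E_0$.

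The paper's proof avoids the problem by never transporting one embedded copy onto the other. It replaces $\phii$ by a concordance $W$ of $PL$-block bundles from $E_0$ to $E_1$ over $N\times I$, together with a block bundle isomorphism $F:W\rightarrow E_1\times I$; the pullback concordance $f^*W$ interpolates between $f^*E_0$ and $f^*E_1$, and the $PL$-embedding $\left(F\times\op{id}_{\R^d}\right)\circ\left(f\times\left(\psi\circ\pi_M\right)\right):f^*W\rightarrow E_1\times I\times\R^d$ restricts over the two ends of $I$ to exactly the two embeddings you want to compare. The identification of the normal bundles $\nu_0$ and $\nu_1$ is then obtained not from a homeomorphism of ambient spaces but from the fact that both are restrictions of a single bundle over the concordance, namely the pullback of the normal bundle of the smooth embedding $f\times\psi:M\rightarrow N\times\R^d$. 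Your appeal to the Hirsch--Mazur recipe and the uniqueness of the differentiable structure on a vector bundle is the same as the paper's; what is missing is this concordance mechanism, and without it the central identification in your second paragraph does not go through.
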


\begin{proof}
Let $\psi:M\rightarrow\R^d$ be a smooth embedding.
This determines a smooth embedding $f^*E_0\rightarrow E_0\times\R^d$ which sends $x\in f^*E_0$ to $(f(x),\psi\circ \pi_{f^*E_0}x)$ where $\pi_{f^*E_0}:f^*E_0\rightarrow M$ is the bundle projection.
Let $\nu_0$ denote the normal bundle of this embedding.
The total space $E\left(\nu_0\right)$ inherits a smooth structure as an open subset of $E_0\times\R^d$.
Let us call this structure $\gamma$.
Also, there is the smooth structure $\nu_0^!f^*\eta$ coming from the vector bundle structure.
By Hirsch-Mazur's description of the induced map,
\[
\nu_0^!\left(PL/O(f)\left(\left[SE/G\right]\right)\right)=[\gamma]
\]
in the set $PL/O\left(E\left(\nu_0\right),\nu_0^!f^*\eta\right)$.

Let $W$ denote a concordance of $PL$-block bundles between $E_0$ and $E_1$.
Then, $f^*W$ is a concordance of $PL$-block bundles between $f^*E_0$ and $f^*E_1$.
Moreover, there is an isomorphism $F:W\rightarrow E_1\times I$ of $PL$-block bundles over $N\times I$.
Let $\pi_M$ denote the composite $f^*W\rightarrow M\times I\rightarrow M$ and consider the $PL$-embedding
\[
\left(F\times\op{id}_{\R^d}\right)\circ\left(f\times\left(\psi\circ\pi_M\right)\right):f^*W\rightarrow W\times\R^d\rightarrow E_1\times I\times\R^d.
\]
Over $0\in I$, this restricts to the embedding $f^*E_0 \rightarrow E_0\times\R^d$ above and over $1\in I$, this restricts to a smooth embedding $f^*E_1\rightarrow E_1\times\R^d$.
Let $\nu_1$ denote the normal bundle of the second embedding.
By taking an open neighborhood of the image of $F\left(f\times\left(\psi\circ\pi_M\right)\right)$, we see that the smooth structure $[\gamma]$ above is the same as $\nu_1^!\left[f^*\phii\right]$.
So it suffices to show that $\nu_0$ and $\nu_1$ are isomorphic as vector bundles.

Let $\nu$ denote the normal bundle of the smooth embedding $f\times\psi:M\rightarrow N\times\R^d$.
The pullback of $\nu$ to $W$ restricts to $\nu_0$ over $0\in I$ and $\nu_1$ over $1\in I$ which shows that $\nu_0$ and $\nu_1$ are isomorphic vector bundles.
\end{proof}

\subsection{Smooth Trivialization of $SE/G$}
We now show that the isomorphism $SE/G\rightarrow M\times SV/G$ of lens space block bundles over $M$ can be made into a diffeomorphism of total spaces.
As before, the main tool will be the use of scaling maps of $\CP^N$.

\begin{prop}\label{prop: smoothing SE/G on CP^N}
Suppose $E$ is a $G$-vector bundle over $\CP^N$ and let $f:SE/G\rightarrow\CP^N\times SV/G$ be an equivalence of $PL$-block bundles.
Then, there is a scaling map $\lambda:\CP^N\rightarrow\CP^N$ such that $\lambda^*f:\lambda^* SE/G\rightarrow\lambda^*\left(\CP^N\times SV/G\right)$ is $PL$-isotopic to a diffeomorphism. 
\end{prop}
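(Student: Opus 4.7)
The plan is to recognize the obstruction to $\lambda^* f$ being $PL$-isotopic to a diffeomorphism as a class in $[\CP^N \times SV/G, PL/O]$ and kill it via a suitable scaling map. By standard smoothing theory, a $PL$-homeomorphism between smooth manifolds is $PL$-isotopic to a diffeomorphism if and only if the smoothing it induces via pullback is concordant to the given one; the discrepancy is measured by an element of $[\CP^N \times SV/G, PL/O]$. By the functoriality of smoothing theory under smooth pullback (Proposition \ref{prop: smoothing pullback}), the smoothing class of $\lambda^* f$ is $(\lambda \times \op{id}_{SV/G})^*[g]$, where $[g] \in [\CP^N \times SV/G, PL/O]$ is the smoothing class associated to $f$ itself; here $\lambda$ may be taken smooth, realized for instance by $[z_0:\cdots:z_N]\mapsto [z_0^t:\cdots:z_N^t]$.

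Via the exponential adjunction, the representative $g:\CP^N \times SV/G \to PL/O$ corresponds to a map $\tilde g:\CP^N \to \op{Maps}(SV/G, PL/O)$. Since $\pi_n(PL/O) = \Theta_n$ is finite for every $n$, the mapping space has torsion higher homotopy groups in every positive degree. I would apply Lemma \ref{lem: obstruction theory CP^N nullhomotope}, taking $B = *$ and $F$ to be the path component of $\op{Maps}(SV/G, PL/O)$ containing the constant map at the basepoint of $PL/O$, to obtain a scaling map $\lambda$ such that $\tilde g \circ \lambda$ is nullhomotopic. Unwinding the adjunction then yields $(\lambda \times \op{id}_{SV/G})^*[g] = 0$, and hence $\lambda^* f$ is $PL$-isotopic to a diffeomorphism.

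The main obstacle is verifying that $\tilde g$ factors through the basepoint component of $\op{Maps}(SV/G, PL/O)$, equivalently that the fiberwise smoothing class of $f$ in $[SV/G, PL/O]$ is trivial; this is important because scaling maps fix certain points of $\CP^N$ and therefore cannot move the class on a fiber. Since both $SE/G$ and $\CP^N \times SV/G$ are smooth block bundles whose fibers are canonically identified with the standard smooth $SV/G$ via the differentiable $G$-vector bundle structures on $E$ and $\CP^N \times V$, smooth block chart trivializations can be chosen over a vertex $v$ under which the restriction of $f$ becomes a $PL$-self-homeomorphism of $SV/G$ which is $PL$-isotopic to a diffeomorphism; in particular, the nullhomotopies constructed in the proof of Proposition \ref{prop: exotic normal bundle} may be arranged to be based, yielding an $f$ whose restriction over the basepoint is the identity. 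With the fiberwise obstruction trivial, the obstruction-theoretic lemma completes the argument.
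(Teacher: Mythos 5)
Your proposal is correct and takes essentially the same route as the paper: what you do with the exponential adjunction and Lemma \ref{lem: obstruction theory CP^N nullhomotope} applied to the basepoint component of $\op{Maps}(SV/G,PL/O)$, the paper packages as the Atiyah--Hirzebruch--Serre spectral sequence for $\left[\CP^N\times SV/G,PL/O\right]$ filtered over the skeleta of $\CP^N$, where a scaling map acts by $t^n$ on the finite groups $H^n\left(\CP^N;\mathbf{E}^{-n}(SV/G)\right)$ and hence kills everything above filtration zero, after which Proposition \ref{prop: smoothing pullback} is invoked exactly as you do. The one soft spot you flag --- arranging that the fiberwise class in $\left[SV/G,PL/O\right]$ vanishes --- is handled in the paper by the same ``we may assume $f$ restricts to a diffeomorphism over a vertex'' move (note that your parenthetical claim that an arbitrary $PL$ self-homeomorphism of $SV/G$ is $PL$-isotopic to a diffeomorphism is not automatic, so the normalization of $f$ over the basepoint is the part of your argument that actually does the work, just as in the paper).
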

\begin{proof}
Identify $\left[\CP^N\times SV/G,PL/O\right]$ with the smoothings of $\CP^N\times SV/G$ by specifying the product smooth structure.
The isomorphism of $PL$-block bundles $f:SE/G\rightarrow \CP^N\times SV/G$ determines an element $[f]\in\left[\CP^N\times SV/G,PL/O\right]$.
Since $PL/O$ is an infinite loop space, there is a generalized cohomology theory $\mathbf{E}^*$ such that $\mathbf{E}^0(X)=[X,PL/O]$.
Explicitly, if $PL/O=\Omega^n \mathbf{E}_n$, then for $n\ge0$, $\mathbf{E}^n(X)=\left[X,\mathbf{E}_n\right]$ and $\mathbf{E}^{-n}(X)=\left[X,\Omega^n PL/O\right]$.
In particular, there is an Atiyah-Hirzebruch-Serre spectral sequence
\[
H^i\left(\CP^N;\mathbf{E}^j(SV/G)\right)\Rightarrow \mathbf{E}^{i+j}\left(\CP^N\times SV/G\right).
\]
Let $X_n$ denote $\pi_{\CP^N}^{-1}\left(\left(\CP^N\right)^{(n)}\right)$, the preimage of the $n$-skeleton of $\CP^N$ under the projection.
Convergence means that there is a filtration
\[
\cdots F_n\subseteq F_{n-1}\subseteq\cdots\subseteq F_0= \mathbf{E}^{i+j}\left(\CP^N\times SV/G\right)
\]
where, for $n>0$, $F_n$ is the kernel of the restriction
\[
\mathbf{E}^{i+j}\left(\CP^N\times SV/G\right)\rightarrow \mathbf{E}^{i+j}\left(X_{n-1}\right)
\]
such that the $E_\infty$-terms of the spectral sequence are subquotients of the filtration.
We will only be interested in the case where $i+j=0$ in which case $E_{\infty}^{n,-n}=F_n/F_{n+1}$.
We may assume that over a vertex $x_0$ of $\CP^N$, $f$ restricts to a diffeomorphism so $[f]$ vanishes under the restriction
\[
\left[\CP^N\times SV/G,PL/O\right]\xrightarrow{x_0^*}\left[SV/G,PL/O\right].
\]
In particular, $[f]\in F_1$.

Now, if $\lambda:\CP^N\rightarrow\CP^N$ is a scaling map, it induces a map of fiber bundles
\[
\lambda:\lambda^*\left(\CP^N\times SV/G\right)\cong\CP^N\times SV/G\rightarrow \CP^N\times SV/G
\]
and hence a morphism of spectral sequences.
For $n>1$, this induces multiplication by $t^n$ on $H^n\left(\CP^N;\mathbf{E}^{-n}(SV/G)\right)$ for some integer $t$.
Since the homotopy groups of $PL/O$ are finite, so are the groups $\left[SV/G,\Omega^n PL/O\right]=\mathbf{E}^{-n}(SV/G)$.
It follows that, by choosing an appropriate $\lambda$, $F_1$ is in the kernel of the induced map
\[
\lambda^*:\mathbf{E}^0\left(\CP^N\times SV/G\right)\rightarrow \mathbf{E}^0\left(\CP^N\times SV/G\right).
\]
In particular, $[f]\in\left[\CP^N\times SV/G,PL/O\right]$ vanishes after pulling back along $\lambda$.
The result now follows from Proposition \ref{prop: smoothing pullback}.
\end{proof}

Using the case for $\CP^N$, we can give an analogous statement for bundles over $M$.

\begin{prop}\label{prop: smoothing SE/G on M}
Suppose a $G$-vector bundle over a smooth manifold $M$ given by Proposition \ref{prop: exotic normal bundle} is classified by the composite
\[
M\xrightarrow{\beta}\CP^N\xrightarrow{E'} BSO^G(V).
\]
Then, there is a scaling map $\lambda$ of $\CP^N$ such that the $G$-vector bundle $E$ classified by
\[
M\xrightarrow{\beta}\CP^N\xrightarrow{\lambda}\CP^N\xrightarrow{E'} BSO^G(V)
\]
gives a lens space bundle $SE/G$ which is equivalent as a $PL$-block bundle to $M\times SV/G$.
Moreover this equivalence is $PL$-isotopic to a diffeomorphism.
\end{prop}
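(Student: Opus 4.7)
The plan is to first solve the problem over $\CP^N$ using Proposition \ref{prop: smoothing SE/G on CP^N}, and then to propagate the result along the classifying map $\beta$ by invoking the naturality statement of Proposition \ref{prop: smoothing pullback}.

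First, observe that the bundle $E'$ on $\CP^N$ is itself the output of the $\CP^N$-case of Proposition \ref{prop: exotic normal bundle}, so there is a $PL$-block bundle equivalence $f' : SE'/G \to \CP^N \times SV/G$. Applying Proposition \ref{prop: smoothing SE/G on CP^N} to $f'$ yields a scaling map $\lambda : \CP^N \to \CP^N$ such that the pullback $\lambda^* f' : \lambda^*(SE'/G) \to \lambda^*(\CP^N \times SV/G) \cong \CP^N \times SV/G$ is $PL$-isotopic to a diffeomorphism.

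Next, define $E := (\lambda \circ \beta)^* E'$, the $G$-vector bundle classified by the composite appearing in the statement. Then $SE/G \cong \beta^*(\lambda^*(SE'/G))$ as $PL$-block bundles over $M$, which gives the desired equivalence with $M \times SV/G$. After replacing $\beta$ by a smoothly homotopic smooth map (permissible since $\CP^N$ is smooth, and this does not change the isomorphism class of the $G$-vector bundle or of the induced block bundle), one applies Proposition \ref{prop: smoothing pullback} with $\phii = \lambda^* f'$ and $f = \beta$. Because $[\lambda^* f'] \in PL/O(\CP^N \times SV/G)$ is the trivial class relative to the product smooth structure, its image $\beta^*[\lambda^* f']$ in $PL/O(M \times SV/G)$ is also trivial; and by Proposition \ref{prop: smoothing pullback} this trivial class is represented by the induced map on pullbacks $SE/G \to M \times SV/G$, which is therefore $PL$-isotopic to a diffeomorphism.

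The main obstacle is genuinely the $\CP^N$-case packaged in Proposition \ref{prop: smoothing SE/G on CP^N}: once scaling maps are used to kill the obstruction to smoothing in the Atiyah-Hirzebruch-Serre spectral sequence for $PL/O$, the remainder of the argument is naturality bookkeeping, which is precisely what Proposition \ref{prop: smoothing pullback} encodes. The only technical wrinkle is the smooth approximation of $\beta$, which is needed to feed $\beta$ into Proposition \ref{prop: smoothing pullback}, but this is standard and does not affect the underlying bundle data.
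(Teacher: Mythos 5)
Your proposal is correct and follows essentially the same route as the paper: apply Proposition \ref{prop: smoothing SE/G on CP^N} to trivialize over $\CP^N$ after a scaling map, then pull back along $\beta$ using the naturality statement of Proposition \ref{prop: smoothing pullback}. Your explicit remark about smoothly approximating $\beta$ is a point the paper leaves implicit, but it does not change the argument.
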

\begin{proof}
Proposition \ref{prop: smoothing SE/G on CP^N} shows that there is a positive degree self-map $\lambda$ of $\CP^N$ such that $\lambda^*E'/G$ is equivalent as a $PL$-block bundle to $\CP^N\times SV/G$ and that this equivalence is $PL$-isotopic to a diffeomorphism.
Applying Proposition \ref{prop: smoothing pullback} to $\beta^*:\left[\CP^N\times SV/G,PL/O\right]\rightarrow\left[M\times SV/G,PL/O\right]$ gives the desired result.
\end{proof}

\subsection{Proof of Theorem \ref{thm: FPRealization}}
We can now prove
\FPRealization*

\begin{proof}
Let $X$ and $M\subseteq X^G$ be as in the theorem.
Define $\bar{X}$ to be the complement of an equivariant tubular neighborhood of $M$ in $X$.
Then $\bar{X}$ has a $G$ action and $\partial\bar{X}$ is equivariantly diffeomorphic to $M\times SV$.

By Proposition \ref{prop: exotic normal bundle} and Proposition \ref{prop: smoothing SE/G on M}, for infinitely many of these vector bundles $E$, there are isomorphisms of $PL$-block bundles $SE/G\rightarrow M\times SV/G$ which are $PL$-concordant to diffeomorphisms.
Let $f:SE/G\rightarrow M\times SV/G$ denote the diffeomorphism and let $\tilde{f}$ denote its lift on $G$-covers.
Define the smooth $G$-manifold $Y:=\bar{X}\cup_{\tilde{f}}E$.

It remains to construct an equivariant homeomorphism $g:Y\rightarrow X$.
On $\bar{X}$, we take $g$ to be the identity so we just need to construct an equivariant homeomorphism $g:DE\rightarrow M\times DV$ where $D$ denotes the unit disk bundle and such that $g$ restricts to $\tilde{f}$ on the boundary.
Since $f$ is $PL$-concordant to an equivalence of $PL$-block bundles, there is a $PL$-isomorphism $F:SE/G\times I\rightarrow M\times SV/G\times I$ such that $F|_{SE/G\times\{0\}}=f$ and $F|_{SE/G\times\{1\}}$ is an equivalence of $PL$-block bundles over $M$.
Writing $DE=SE\times I\cup DE$ and defining $g$ to be the lift of $F$ on $SE\times I$, we may assume instead that $f$ is an equivalence of $PL$-block bundles over $M$.
Proposition \ref{prop: block bundle control} shows that $\tilde{f}$ may be extended to an equivariant homeomorphism.
This proves the first part.

For the second part, just note that by taking scaling maps of $\CP^N$, the first Chern classes are being multiplied by constants $t$ with $\abs{t}>1$.
\end{proof}

\section{Nontrivial Normal Bundles}\label{section: nontrivial normal bundles}
So far, we have concentrated on the case where the normal bundle of $M$ is trivial as a $G$-vector bundle.
If this assumption is removed, the characteristic class computations become much more difficult and there is not much we are able to say.
Suppose $\xi=\bigoplus_{k=1}^{\frac{p-1}{2}}\xi_k$ and $E=\bigoplus_{k=1}^{\frac{p-1}{2}} E_k$ are $G$-vector bundles over $M$.
In order for the Atiyah-Singer classes to be equal, one sees that, in cohomological degree $2$,
\[
\sum\tau(1)\left(\zeta^k\right)\left(c_1\left(\xi_k\right)-c_1\left(E_k\right)\right)=0
\]
so the classes $c_1\left(\xi_k\right)-c_1\left(E_k\right)$ must realize the linear relation between the $\tau(1)\left(\zeta^k\right)$.
One can also derive a condition for $c_2$.

\begin{prop}\label{prop: c_2 of nontrivial vector bundle}
If $\xi$ and $E$ have the same Atiyah-Singer class, then
\[
c_2\left(\xi_k\right)-c_2\left(E_k\right)=\frac{1}{2}c_1\left(\xi_k\right)^2-\frac{1}{2}c_1\left(E_k\right)^2.
\]
\end{prop}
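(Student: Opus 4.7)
The plan is to equate the degree $4$ components of $\mc{M}(g_0,\xi)$ and $\mc{M}(g_0,E)$, in exact analogy with the calculation carried out in the proof of Theorem \ref{thm: Chern class restriction}, and isolate the $c_2$--contribution using the $\Q$-linear independence of $\{\tau(2)(\zeta^k)\}$ guaranteed by Lemma \ref{lem: Ewing}.

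First I would compute the cohomological degree $4$ part of each factor $\mc{M}^{\zeta^k}(E_k)$: it equals $\tau(2)(\zeta^k)\,c_2(E_k)+\tau(1,1)(\zeta^k)\,c_1(E_k)^2$. Taking the full product and collecting terms, the degree $4$ part of $\mc{M}(g_0,E)$ becomes
\[
\sum_k\bigl[\tau(2)(\zeta^k)c_2(E_k)+\tau(1,1)(\zeta^k)c_1(E_k)^2\bigr]+\sum_{k_1<k_2}\tau(1)(\zeta^{k_1})\tau(1)(\zeta^{k_2})c_1(E_{k_1})c_1(E_{k_2}),
\]
and similarly with $E$ replaced by $\xi$. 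Setting the two expressions equal and writing $\Delta c_j(k):=c_j(\xi_k)-c_j(E_k)$ gives an equation I would then simplify.

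The key algebraic identity is $\tau(1)(\zeta^k)^2=\tau(2)(\zeta^k)+2\tau(1,1)(\zeta^k)$, which follows from Newton's identity $p_2=e_1^2-2e_2$ applied to the formal roots $\beta_{j,k}$. Using this to eliminate $\tau(1,1)(\zeta^k)$ in favor of $\tau(2)(\zeta^k)$ and $\tau(1)(\zeta^k)^2$, the degree $4$ equation regroups as
\[
\sum_k\tau(2)(\zeta^k)\!\left[\Delta c_2(k)-\tfrac{1}{2}\bigl(c_1(\xi_k)^2-c_1(E_k)^2\bigr)\right]+\tfrac{1}{2}\!\!\!\sum_{k_1,k_2}\!\!\!\tau(1)(\zeta^{k_1})\tau(1)(\zeta^{k_2})\bigl(c_1(\xi_{k_1})c_1(\xi_{k_2})-c_1(E_{k_1})c_1(E_{k_2})\bigr)=0.
\]
The double sum on the right is exactly $\tfrac{1}{2}\bigl[(\sum_k\tau(1)(\zeta^k)c_1(\xi_k))^2-(\sum_k\tau(1)(\zeta^k)c_1(E_k))^2\bigr]$, and this vanishes because equality of Atiyah--Singer classes in degree $2$ forces $\sum_k\tau(1)(\zeta^k)\bigl(c_1(\xi_k)-c_1(E_k)\bigr)=0$, so the two squared quantities are equal.

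Thus the relation reduces to $\sum_k\tau(2)(\zeta^k)\bigl[\Delta c_2(k)-\tfrac{1}{2}(c_1(\xi_k)^2-c_1(E_k)^2)\bigr]=0$ in $H^4(M;\C)$. By Lemma \ref{lem: Ewing}, the scalars $\tau(2)(\zeta^k)\in\Q(\zeta)$ are $\Q$-linearly independent; since the bracketed classes lie in the rational subspace $H^4(M;\Q)$ and $\C=\Q(\zeta)\oplus W$ splits as a $\Q(\zeta)$-vector space, this linear independence passes through the tensor product $\C\otimes_\Q H^4(M;\Q)$ and forces each bracketed class to vanish, yielding the desired formula $c_2(\xi_k)-c_2(E_k)=\tfrac{1}{2}c_1(\xi_k)^2-\tfrac{1}{2}c_1(E_k)^2$. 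The main (minor) obstacle is purely bookkeeping: correctly collecting the mixed $c_1c_1$ cross terms and spotting that they are exactly what is needed to kill the $\tau(1)^2$--contribution coming from the identity relating $\tau(1)^2$, $\tau(2)$, and $\tau(1,1)$.
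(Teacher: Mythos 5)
Your proposal is correct and follows essentially the same route as the paper: equate the degree $4$ parts, use the squared degree $2$ relation $\sum_k\tau(1)(\zeta^k)\bigl(c_1(\xi_k)-c_1(E_k)\bigr)=0$ to dispose of the $c_1c_1$ cross terms, and conclude via Ewing's $\Q$-linear independence of $\{\tau(2)(\zeta^k)\}$. Your bookkeeping via the Newton identity $\tau(1)^2=\tau(2)+2\tau(1,1)$ is in fact slightly cleaner than the paper's intermediate manipulations and arrives at the same final identity.
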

\begin{proof}
In cohomological degree $4$, we have
\begin{align*}
0&=\left(\prod_{k=1}^{\frac{p-1}{2}}\mc{M}^{\zeta^k}(\xi_k)\right)_{4}-\left(\prod_{k=1}^{\frac{p-1}{2}}\mc{M}^{\zeta^k}(E_k)\right)_{4}\\
=&\sum_{k=1}^{\frac{p-1}{2}}\tau(2)(\zeta^k)(c_2(\xi_k)-c_2(E_k))+\sum_{k=1}^{\frac{p-1}{2}}\tau(1,1)(\zeta^k)(c_1(\xi_k)^2-c_1(E_k)^2)
\\&+\sum_{k_1\neq k_2}\tau(1)(\zeta_{k_1})\tau(1)(\zeta_{k_2})(c_1(\xi_{k_1})c_1(\xi_{k_2})-c_1(E_{k_1})c_1(E_{k_2})).
\end{align*}

We use the conditions on the first Chern class to simplify the expression.
\begin{align*}
0&=\left(\sum_{k=1}^{\frac{p-1}{2}}\tau(1)(\zeta^k)(c_1(\xi_k)-c_1(E_k))\right)^2\\
&=\sum_{k_1,k_2=1}^{\frac{p-1}{2}}\tau(1)(\zeta_{k_1})\tau(1)(\zeta_{k_2})\left(c_1(\xi_{k_1})c_1(\xi_{k_2})-c_1(E_{k_1})c_1(\xi_{k_2})-c_1(\xi_{k_1})c_1(E_{k_2})+c_1(E_{k_1})c_1(E_{k_2})\right)
\end{align*}
We can square
\[
2\sum_{k=1}^{\frac{p-1}{2}}\tau(1)\left(\zeta^k\right)c_1\left(\xi_k\right)=\sum_{k=1}^{\frac{p-1}{2}}\tau(1)\left(\zeta^k\right)\left(c_1\left(\xi_k\right)+c_1\left(E_k\right)\right)
\]
to obtain
\begin{align*}
&4\sum_{k_1,k_2=1}^{\frac{p-1}{2}}\tau(1)\left(\zeta_{k_1}\right)\tau(1)\left(\zeta_{k_2}\right)c_1\left(\xi_{k_1}\right)c_1\left(\xi_{k_2}\right)\\
&=\sum_{k_1,k_2=1}^{\frac{p-1}{2}}\tau(1)\left(\zeta_{k_1}\right)\tau(1)\left(\zeta_{k_2}\right)\left(c_1\left(\xi_{k_1}\right)c_1\left(\xi_{k_2}\right)+c_1\left(\xi_{k_1}\right)c_1\left(E_{k_2}\right)+c_1\left(E_{k_1}\right)c_1\left(\xi_{k_2}\right)+c_1\left(E_{k_1}\right)c_1\left(E_{k_2}\right)\right).
\end{align*}
Now, subtracting $2\sum_{k_1,k_2=1}^{\frac{p-1}{2}}\tau(1)\left(\zeta_{k_1}\right)\tau(1)\left(\zeta_{k_2}\right)\left(c_1\left(\xi_{k_1}\right)c_1\left(\xi_{k_2}\right)+c_1\left(E_{k_1}\right)c_1\left(E_{k_2}\right)\right)$ from both sides gives
\begin{align*}
&2\sum_{k_1,k_2=1}^{\frac{p-1}{2}}\tau(1)\left(\zeta_{k_1}\right)\tau(1)\left(\zeta_{k_2}\right)\left(c_1\left(\xi_{k_1}\right)c_1\left(\xi_{k_2}\right)-c_1\left(E_{k_1}\right)c_1\left(E_{k_2}\right)\right)\\
&=\sum_{k_1,k_2}^{\frac{p-1}{2}}\tau(1)\left(\zeta_{k_1}\right)\tau(1)\left(\zeta_{k_2}\right)\left(-c_1\left(\xi_{k_1}\right)c_1\left(\xi_{k_2}\right)+c_1\left(\xi_{k_1}\right)c_1\left(E_{k_2}\right)+c_1\left(E_{k_1}\right)c_1\left(\xi_{k_2}\right)-c_1\left(E_{k_1}\right)c_1\left(E_{k_2}\right)\right).
\end{align*}
Our previous computation shows that this is $0$.

We use this to cancel out a lot of the classes showing up in the Atiyah-Singer formula.
We obtain
\begin{align*}
0&=\sum_{k=1}^{\frac{p-1}{2}}\tau(2)\left(\zeta^k\right)\left(c_2\left(\xi_k\right)-c_2\left(E_2\right)\right)-\tau(1)\left(\zeta^k\right)^2\left(c_1\left(\xi_k\right)^2-c_1\left(E_k\right)^2\right)\\
&+\tau(1,1)\left(\zeta^k\right)\left(c_1\left(\xi_k\right)^2-c_1\left(E_k\right)^2\right)\\
&=\sum_{k=1}^{\frac{p-1}{2}}\tau(2)\left(\zeta^k\right)\left(c_2\left(\xi_k\right)-c_2\left(E_k\right)\right)-\frac{1}{2}\left(c_1\left(\xi_k\right)^2-c_1\left(E_k\right)^2\right).
\end{align*}
By linear independence of $\{\tau(2)(\zeta^k)\}$, we obtain $c_2\left(\xi_k\right)-c_2\left(E_k\right)=\frac{1}{2}\left(c_1\left(\xi_k\right)^2-c_1\left(E_k\right)^2\right)$.
\end{proof}

This condition on $c_2$ shows that our analysis of the trivial bundle case does not na\"ively extend to the nontrivial case.

\begin{example}
Let $M=\CP^2\#\overline{\CP^2}$.
The cohomology ring of $M$ is $H^*\left(M;\Z\right)\cong\Z[a,b]/\left(a^3=b^3=0,ab=0,a^2=-b^2\right)$.
Now, let $\xi=\bigoplus_{k=1}^{\frac{p-1}{2}}\xi_k$ where each eigenbundle $\xi_k$ is a line bundle with $c_1\left(\xi_k\right)=a$.
Suppose $E=\bigoplus_{k=1}^{\frac{p-1}{2}} E_k$ has the same Atiyah-Singer class.
Then each $E_k$ must be a line bundle so Proposition \ref{prop: c_2 of nontrivial vector bundle} gives
\[
c_1\left(E_k\right)^2=c_1\left(\xi_k\right)^2=a^2.
\]
Writing $c_1\left(E_k\right)=xa+yb$, this equation becomes
\[
\left(x^2-y^2\right)a^2=a^2.
\]
Since $x^2-y^2=1$ has only finitely many integer solutions, we conclude that there only finitely many cohomology classes appear as $c_1\left(E_k\right)$.
Finally, note that $(a+b)^2=0$ so there is a nonzero element sufficiently nilpotent with respect to the representation.
\end{example}

Proposition \ref{prop: c_2 of nontrivial vector bundle} suggests a condition such as
\[
c_m\left(\xi_k\right)-c_m\left(E_k\right)=\frac{1}{m!}c_1\left(\xi_k\right)^m-\frac{1}{m!}c_1\left(E_k\right)^m
\]
is the correct way of generalizing the exponential condition.
However, we have not been able to modify the proof of \ref{thm: Chern class restriction} to this more general case.
In order to show that sums of exponential vector bundles give rise to trivial lens space block bundles, we also had to show that the Pontryagin classes and the Euler class vanish.
It follows from Proposition \ref{prop: c_2 of nontrivial vector bundle} that $p_1\left(\xi_k\right)=p_1\left(E_k\right)$ when $\xi$ and $E$ have the same Atiyah-Singer class.
In general, there is no reason to expect the Pontryagin classes or the Euler class of $\xi$ and $E$ to be equal when $\xi$ and $E$ have the same Atiyah-Singer class.

Our methods do give smoothings when the normal bundle has a large trivial summand.
By factoring a normal bundle $M\rightarrow BSO^G(V)$ through $Y\times\CP^N$ where $Y$ is a product of Grassmannians, we can prove the following.

\begin{theorem}\label{thm: nontrivial normal bundles}
Let $G=\Z/p\Z$ where $p$ is such that $2$ has odd order in $\left(\Z/p\Z\right)^{\times}$.
Let $X$ be a smooth $G$-manifold and let $M$ be a component of $X^G$.

Suppose the normal bundle $\nu$ of $M$ is of the form $\nu_0\oplus \ep_{V_1}$ where $\ep_{V_1}$ denotes the trivial $G$-vector bundle for some representation $V_1$.
If there is a nonzero element $\beta\in H^2\left(M;\Q\right)$ sufficiently nilpotent with respect to $V_1$, then $\overline{TOP/O}_G\left(X\right)$ is infinite.
\end{theorem}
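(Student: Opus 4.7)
The strategy mirrors the proof of Theorem \ref{thm: FPRealization}: we replace the trivial summand $\ep_{V_1}$ of the normal bundle by an exotic exponential $G$-vector bundle $E$ on $M$ with fiber $V_1$, keeping $\nu_0$ unchanged, and then swap out the equivariant tubular neighborhood of $M$. The hypothesis that $\beta$ is sufficiently nilpotent with respect to $V_1$ allows us to apply Theorem \ref{thm: VBExistence} to produce infinitely many such $E$ with vanishing Atiyah-Singer class, and Proposition \ref{prop: manifolds with infinitely many bundles} ensures that their classifying maps factor through $\CP^N$.

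The first key step is to show that for each such $E$, the bundle $\nu' := \nu_0 \oplus E$ satisfies $S\nu'/G \simeq S\nu/G$ as $PL$-block bundles over $M$. Multiplicativity of $\mc{M}$ and the fact that $\mc{M}(g_0, E) = 1$ as a cohomology class yield $\mc{M}(g_0, \nu') = \mc{M}(g_0, \nu_0)\mc{M}(g_0, E) = \mc{M}(g_0, \nu_0) = \mc{M}(g_0, \nu)$, so $\nu'$ and $\nu$ have equal Atiyah-Singer classes, and by Proposition \ref{prop: BSPL(SV/G) and AS invariant} their classifying maps to $\tilde{L}^s_{\dim_\R V}(G)_{(0)}$ are homotopic. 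Non-equivariantly, Propositions \ref{prop: exp vector bundle properties} and \ref{prop: euler class vanishes} imply that $E$ has vanishing Pontryagin and Euler classes, so after a scaling furnished by Lemma \ref{lem: obstruction theory CP^N} the classifying maps of $\nu'$ and $\nu$ to $B\widetilde{SPL}(SV)$ also agree. Factoring the normal bundle as $M \to Y \times \CP^N \to BSO^G(V_0) \times BSO^G(V_1) \to BSO^G(V)$, where $Y$ is a product of complex Grassmannians classifying the eigenbundles of $\nu_0$ and the $\CP^N$-factor classifies $E$, we apply Theorem \ref{thm: rational homotopy of SPL(Lens)} and Lemma \ref{lem: obstruction theory CP^N nullhomotope}: the fiber of $B\widetilde{SPL}(SV/G) \to B\widetilde{SPL}(SV) \times \tilde{L}^s_{\dim_\R V}(G)_{(0)}$ is connected with torsion homotopy groups, so after composing with a further scaling on the $\CP^N$-factor, the remaining torsion obstructions vanish and the classifying maps of $\nu'$ and $\nu$ to $B\widetilde{SPL}(SV/G)$ become homotopic. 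A final application of the Atiyah-Hirzebruch-Serre spectral sequence argument from Proposition \ref{prop: smoothing SE/G on CP^N}, adapted to $Y \times \CP^N$ with scaling acting only on the $\CP^N$-factor, upgrades the equivalence $S\nu'/G \simeq S\nu/G$ to one that is $PL$-isotopic to a diffeomorphism.

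With these equivariant diffeomorphisms in hand, the construction of exotic smoothings proceeds exactly as in the proof of Theorem \ref{thm: FPRealization}: letting $\bar X$ denote the complement of an equivariant tubular neighborhood of $M$, define $Y_E := \bar X \cup_{\tilde f} D\nu'$, where $\tilde f$ is the lift to $G$-covers of the diffeomorphism $S\nu'/G \to S\nu/G$, and use Proposition \ref{prop: block bundle control} to extend $\tilde f$ to an equivariant homeomorphism $Y_E \to X$. As the scaling parameter $t$ varies over the integers, the first Chern classes $c_1(E_k) = t u_k \beta$ scale accordingly, placing the first Chern classes of the resulting bundles $\nu'$ in infinitely many $GL_{\dim_\Q H^2(M;\Q)}(\Z)$-orbits of $H^2(M;\Q)$; hence the $Y_E$ are pairwise non-$G$-diffeomorphic and $\overline{TOP/O}_G(X)$ is infinite.

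The main technical obstacle is the obstruction-theoretic reduction in the second paragraph: the product factorization through $Y \times \CP^N$ must be respected so that the scaling maps act only on the $\CP^N$-factor, leaving $Y$ (which encodes the fixed $\nu_0$) untouched. The relevant obstruction classes live in $H^*(M; \pi_*\,\text{fiber})$ and, after pullback through the projection $M \to \CP^N$, can be annihilated by a suitable scaling map of $\CP^N$, exactly as in the arguments of Sections \ref{section: construction of vb} and \ref{section: smoothing}.
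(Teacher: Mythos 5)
Your proposal is correct and follows the same overall strategy as the paper: factor the normal data through $Y\times\CP^N$ with $Y$ a product of Grassmannians carrying $\nu_0$, use scaling maps of the $\CP^N$-factor to kill torsion obstructions, upgrade the resulting $PL$-block bundle equivalence to a diffeomorphism via the Atiyah--Hirzebruch--Serre spectral sequence over $\CP^N$, and then glue exactly as in Theorem \ref{thm: FPRealization}. The one step you organize differently is the equivalence $S\nu'/G\simeq S\nu/G$ of $PL$-block bundles. The paper never compares the classifying maps of the full bundles $\nu'$ and $\nu$ in $B\widetilde{SPL}(SV/G)$: it first trivializes $SE'/G$ over $\CP^N$ alone (quoting Proposition \ref{prop: exotic normal bundle}, where Lemmas \ref{lem: obstruction theory CP^N} and \ref{lem: obstruction theory CP^N nullhomotope} apply verbatim to a single map out of $\CP^N$), and then observes that a block trivialization of $SE'/G$ induces one of $S(\gamma\oplus E')/G\simeq S(\gamma\oplus\ep_{V_1})/G$ fiberwise. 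You instead compare the two classifying maps of $\nu'=\nu_0\oplus E$ and $\nu$ directly, using multiplicativity of $\mc{M}$ to equate the Atiyah--Singer classes and then Proposition \ref{prop: BSPL(SV/G) and AS invariant} together with Theorem \ref{thm: rational homotopy of SPL(Lens)}. This works, but note that Lemma \ref{lem: obstruction theory CP^N nullhomotope} as stated only nullhomotopes a single map from $\CP^N$; to homotope two maps out of $Y\times\CP^N$ you need a relative version over the pair $\left(Y\times\CP^N,\,Y\times\{*\}\right)$, on which the two classifying maps agree (both restrict to $\gamma\oplus\ep_{V_1}$ there), so that the obstruction classes lie in $H^*\left(Y\times\CP^N,Y\times\{*\};\pi_*F\right)$, where $\op{id}_Y\times\lambda$ acts by positive powers of the scaling integer on each K\"unneth summand and therefore annihilates torsion. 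You correctly identify this as the main technical point. The trade-off: the paper's route lets it quote Proposition \ref{prop: exotic normal bundle} off the shelf at the cost of an unspelled-out fiberwise-join argument, while your version makes explicit why the $\nu_0$-summand is harmless (its Atiyah--Singer contribution cancels by multiplicativity) at the cost of the relative obstruction-theory setup.
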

\begin{proof}
Let $E$ be a $G$-vector bundle over $M$ with fiber $V$ whose lens space bundle is trivial as a $PL$-block bundle constructed as in Theorem \ref{thm: VBExistence}.
Since $SO^G\left(V_0\right)$ is a product of unitary groups, there is a product of Grassmannian manifolds $Y$ such that the classifying map for $\nu\oplus E$ factors as follows.
\[
\begin{tikzpicture}[scale=2]
\node (A) at (0,1) {$M$};\node (B) at (1,1) {$Y\times\CP^N$};\node (C) at (3,1) {$BSO^G\left(V_0\right)\times BSO^G\left(V_1\right)$};\node (D) at (6,1) {$BSO^G(V_0\oplus V_1)$};
\node (E) at (3,0) {$B\widetilde{SPL}\left(SV_0/G\right)\times B\widetilde{SPL}\left(SV_1/G\right)$};\node (F) at (6,0) {$B\widetilde{SPL}\left(S\left(V_0\oplus V_1\right)/G\right)$};
\path[->] (A) edge (B) (B) edge node[above]{$\gamma\times E'$} (C) (C) edge (D) (C) edge (E) (E) edge (F) (D) edge (F);
\end{tikzpicture}
\]
The map $M\rightarrow\CP^N$ is determined by a nonzero multiple of the cohomology class $\beta$.
By abuse of notation, we will denote this map by $\beta$.
The map $E':\CP^N\rightarrow BSO^G\left(V_1\right)$ defines a $G$-vector bundle whose lens space bundle is trivial as a $PL$-block bundle.
We may assume that the $\zeta^k$-eigenbundle $E'_k$ of $E'$ (corresponding to the eigenbundle decomposition with respect to some fixed generator $g_0\in\Z/p\Z$) has nonzero first Chern class.

We first consider the bundle $\gamma\times E'$ over $M=Y\times\CP^N$.
Since the lens space bundle of $E'$ gives a trivial $PL$-block bundle, we see that the lens space bundle of the $G$-vector bundle $\gamma\times E'$ is isomorphic as a $PL$-block bundle to $\gamma\times \ep_{V_1}$.
This gives us an element of $PL/O(S(\gamma\times\ep_{V_1})/G,\eta)$ where $\eta$ is the smooth structure on the lens space bundle given by considering $\gamma\times\ep_{V_1}$ as a differentiable $G$-vector bundle.
Note that $S\left(\gamma\times\ep_{V_1}\right)/G$ is a bundle over $\CP^N$ with fiber the total space of $S\left(\gamma\times\ep_{V_1}|_{Y\times\{*\}}\right)/G$.
In particular, we may apply the Atiyah-Hirzebruch-Serre spectral sequence
\[
H^i\left(\CP^N;\mathbf{E}^{-i}\left(S\left(\gamma\times\ep_{V_1}|_{Y\times\{*\}}\right)/G\right)\right)\Rightarrow\left[S\left(\gamma\times\ep_{V_1}\right)/G,PL/O\right]
\]
and argue as in Proposition \ref{prop: smoothing SE/G on CP^N} to see that, for some scaling map $\lambda$ of $\CP^N$, there is a $PL$-block bundle isomorphism $\lambda^*S\left(\gamma\times E'\right)/G\rightarrow S\left(\gamma\times\ep_{V_1}\right)/G$ which is $PL$-isotopic to a diffeomorphism.
Moreover, $\lambda^*\left(\gamma\times E'\right)=\gamma\times\lambda^*E'$ so $c_1\left(\lambda^*\left(\gamma\times E'\right)\right)=c_1\left(\gamma\right)+tc_1\left(E'\right)$ for some nonzero $t$.
This equation also holds on eigenbundles so $c_1\left(\lambda^*\left(\gamma\times E'\right)\right)\neq c_1\left(\left(\gamma\times\ep_{V_1}\right)_k\right)$.

As a consequence, $\nu_0\oplus\beta^*\lambda^*E'$ is a $G$-vector bundle over $M$ such that $c_1\left(\left(\nu_0\oplus\beta^*\lambda^*E'\right)_k\right)-c_1\left(\left(\nu_0\oplus\ep_{V_1}\right)_k\right)$ is a nonzero integer multiple of $\beta$.
Also, there is an isomorphism of $PL$-block bundles $S\left(\nu_0\oplus\beta^*\lambda^*E'\right)/G\rightarrow S\left(\nu_0\oplus\ep_{V_1}\right)/G$ which is $PL$-isotopic to a $PL$-diffeomorphism.
Proceeding as in the proof of Theorem \ref{thm: FPRealization}, we see that $\nu_0\oplus\beta^*\lambda^*E'$ is an exotic normal bundle of $(X,M)$.
Finally, taking further compositions with scaling maps $Y\times\CP^N\rightarrow Y\times\CP^N$ shows that there are infinitely many such bundles whose $\zeta^k$-eigenbundles have distinct first Chern classes.
\end{proof}

\subsection{Stable Smoothings}\label{subsection: Stability}
\begin{definition}\label{def: stable smoothing}
Let $X$ be a $G$-manifold.
A \emph{stable $G$-smoothing} of $X$ is a $G$-smoothing $\alpha:Y\rightarrow X\times\rho$ where $\rho$ is some $G$-representation.
Two stable $G$-smoothings $\alpha_i:Y_i\rightarrow X\times\rho_i$, $i=0,1$ are \emph{stably isotopic} if there are representations $\sigma_i$ such that $\rho_0\oplus\sigma_0\cong\rho_1\oplus\sigma_1$ and the smooth $G$-structures $\alpha_i\times\op{id}_{\sigma_i}:Y_i\times\sigma_i\rightarrow X\times\rho_i\times\sigma_i$ are $G$-isotopic.
Let $TOP/O_G^{st}\left(X\right)$ denote the stable isotopy classes of stable $G$-smoothings.
\end{definition}

\StabilityThm*

\begin{proof}
By Theorem \ref{thm: nontrivial normal bundles}, after taking the product with a sufficiently large representation $\rho$, the set $TOP/O_G\left(X\times\rho\right)$ is infinite.
For a smoothing $\alpha:Y\rightarrow X\times\rho$, let $\left(\nu_{\alpha^{-1}}M\right)_k$ denote the $\zeta^k$-eigenbundle of the normal bundle of $\alpha^{-1}(M)$.
The construction of these smoothings yields an infinite set $\left\{\alpha_j:Y_j\rightarrow X\times\rho\right\}$ such that $c_1\left(\left(\nu_{\alpha_i^{-1}M}\right)_k\right)\neq c_1\left(\left(\nu_{\alpha_j^{-1}M}\right)_k\right)$ for some $k$ and whenever $i\neq j$.
If $\sigma$ is a $G$-representation,
\[
c_1\left(\left(\nu_{\alpha_i^{-1}M}\right)_k\right)=c_1\left(\left(\nu_{\left(\alpha_i\times\op{id}_{\sigma}\right)^{-1}M}\right)_k\right)
\]
so we see that
\[
c_1\left(\left(\nu_{\left(\alpha_i\times\op{id}_\sigma\right)M}\right)_k\right)\neq c_1\left(\left(\nu_{\left(\alpha_j\times\op{id}_\sigma\right)M}\right)_k\right)
\]
whenever $i\neq j$.
Therefore, the smoothings constructed in Theorem \ref{thm: nontrivial normal bundles} are not stably isotopic.
\end{proof}

\bibliographystyle{alpha}
\bibliography{ChernClassObstructions}

\begin{thebibliography}{MM79}

\bibitem[AS68]{AtiyahSinger3}
M.~F. Atiyah and I.~M. Singer.
\newblock The index of elliptic operators. {III}.
\newblock {\em Ann. of Math. (2)}, 87:546--604, 1968.

\bibitem[Bau77]{Baues}
Hans~J. Baues.
\newblock {\em Obstruction theory on homotopy classification of maps}.
\newblock Lecture Notes in Mathematics, Vol. 628. Springer-Verlag, Berlin-New
  York, 1977.

\bibitem[BH78]{BrowderHsiangProblem}
W.~Browder and W.~C. Hsiang.
\newblock Some problems on homotopy theory manifolds and transformation groups.
\newblock In {\em Algebraic and geometric topology ({P}roc. {S}ympos. {P}ure
  {M}ath., {S}tanford {U}niv., {S}tanford, {C}alif., 1976), {P}art 2}, volume
  XXXII of {\em Proc. Sympos. Pure Math.}, pages 251--267. Amer. Math. Soc.,
  Providence, RI, 1978.

\bibitem[Cas96]{CassonThesis}
A.~J. Casson.
\newblock Generalisations and applications of block bundles.
\newblock In {\em The {H}auptvermutung book}, volume~1 of {\em $K$-Monogr.
  Math.}, pages 33--67. Kluwer Acad. Publ., Dordrecht, 1996.

\bibitem[CF64]{ConnerFloyd}
P.~E. Conner and E.~E. Floyd.
\newblock {\em Differentiable periodic maps}, volume Band 33 of {\em Ergebnisse
  der Mathematik und ihrer Grenzgebiete, (N.F.)}.
\newblock Academic Press, Inc., Publishers, New York; Springer-Verlag,
  Berlin-G\"{o}ttingen-Heidelberg, 1964.

\bibitem[CW91]{CappellWeinbergerSimpleAS}
Sylvain Cappell and Shmuel Weinberger.
\newblock A simple construction of {A}tiyah-{S}inger classes and piecewise
  linear transformation groups.
\newblock {\em J. Differential Geom.}, 33(3):731--742, 1991.

\bibitem[Ewi76]{EwingSpheresAsFPSets}
John Ewing.
\newblock Spheres as fixed point sets.
\newblock {\em Quart. J. Math. Oxford Ser. (2)}, 27(108):445--455, 1976.

\bibitem[Ewi78]{EwingSemifree}
John Ewing.
\newblock Semifree actions on finite groups on homotopy spheres.
\newblock {\em Trans. Amer. Math. Soc.}, 245:431--442, 1978.

\bibitem[Hir66]{Hirzebruch}
F.~Hirzebruch.
\newblock {\em Topological methods in algebraic geometry}, volume Band 131 of
  {\em Die Grundlehren der mathematischen Wissenschaften}.
\newblock Springer-Verlag New York, Inc., New York, german edition, 1966.

\bibitem[HM74]{HirschMazur}
Morris~W. Hirsch and Barry Mazur.
\newblock {\em Smoothings of piecewise linear manifolds}.
\newblock Annals of Mathematics Studies, No. 80. Princeton University Press,
  Princeton, N. J.; University of Tokyo Press, Tokyo, 1974.

\bibitem[KM63]{KervaireMilnor}
Michel~A. Kervaire and John~W. Milnor.
\newblock Groups of homotopy spheres. {I}.
\newblock {\em Ann. of Math. (2)}, 77:504--537, 1963.

\bibitem[KS77]{KirbySiebenmann}
Robion~C. Kirby and Laurence~C. Siebenmann.
\newblock {\em Foundational essays on topological manifolds, smoothings, and
  triangulations}.
\newblock Annals of Mathematics Studies, No. 88. Princeton University Press,
  Princeton, N.J.; University of Tokyo Press, Tokyo, 1977.
\newblock With notes by John Milnor and Michael Atiyah.

\bibitem[Las79]{LashofStableGSmoothing}
Richard Lashof.
\newblock Stable {$G$}-smoothing.
\newblock In {\em Algebraic topology, {W}aterloo, 1978 ({P}roc. {C}onf.,
  {U}niv. {W}aterloo, {W}aterloo, {O}nt., 1978)}, volume 741 of {\em Lecture
  Notes in Math}, pages pp 283--306. Springer, Berlin, 1979.

\bibitem[MM79]{MadsenMilgram}
Ib~Madsen and R.~James Milgram.
\newblock {\em The classifying spaces for surgery and cobordism of manifolds}.
\newblock Annals of Mathematics Studies, No. 92. Princeton University Press,
  Princeton, N.J.; University of Tokyo Press, Tokyo, 1979.

\bibitem[RS71]{RourkeSandersonDelta2}
C.~P. Rourke and B.~J. Sanderson.
\newblock {$\triangle $}-sets. {II}. {B}lock bundles and block fibrations.
\newblock {\em Quart. J. Math. Oxford Ser. (2)}, 22:465--485, 1971.

\bibitem[Sch79]{SchultzSpherelike}
Reinhard Schultz.
\newblock Spherelike {$G$}-manifolds with exotic equivariant tangent bundles.
\newblock In {\em Studies in algebraic topology}, volume~5 of {\em Adv. in
  Math. Suppl. Stud.}, pages 1--38. Academic Press, New York-London, 1979.

\end{thebibliography}
\end{document}